\documentclass[11pt,a4paper,leqno]{article}
\usepackage[utf8]{inputenc}
\usepackage{amsmath}
\usepackage{amsfonts}
\usepackage{amssymb}
\usepackage{amsthm}
\usepackage{makeidx}
\usepackage{graphicx}
\usepackage{authblk}
\usepackage{subfig}
\usepackage{subeqnarray}
\usepackage[T1]{fontenc}
\usepackage[top=2cm, bottom=2cm, left=2cm, right=2cm]{geometry}
\usepackage{marginnote}
\usepackage{comment}

\usepackage{imakeidx}
\usepackage{hyperref}
\usepackage{color}
\usepackage{mathtools}
\mathtoolsset{showonlyrefs}
\usepackage{tabularx}
\newcommand{\unf}[1]{\mathcal{T}_#1}
\newcommand{\averaging}{\mathcal{U}_\varepsilon}

\newcommand{\eps}{\varepsilon}
\newcommand{\eto}{\stackrel{\eps\to 0}{\longrightarrow}}

\newcommand{\weto}{\stackrel{\eps\to 0}{\rightharpoonup}}

\usepackage{imakeidx}
\usepackage{hyperref}
\usepackage{color}
\makeindex[intoc]
\title{Homogenization in 3D thin domains  \\ with oscillating boundaries of different orders}
\date{2024}

\author[1,2]{Jos\'e M. Arrieta\thanks{e-mail: arrieta@mat.ucm.es.}}

\author[3,4]{Jean Carlos Nakasato\thanks{e-mail: j.c.nakasato@math.sci.hokudai.ac.jp or nakasato@ime.usp.br}}

\author[5]{Manuel Villanueva-Pesqueira\thanks{e-mail: mvillanueva@comillas.edu}}

\affil[1]{Dept. An\'alisis Mat y Matem\'atica Aplicada \\  Universidad Complutense de Madrid, Spain}
\affil[2]{ Instituto de Ciencias Matem\'aticas
CSIC-UAM-UC3M-UCM \\ C/Nicol\'as Cabrera 13-15, Cantoblanco, 28049 Madrid, Spain }

\affil[3]{Department of Mathematics, Faculty of Science, University of Hokkaido, Japan}
\affil[4]{Departamento de Matem\'atica Aplicada, Instituto de Matem\'atica e Estat\'istica, Universidade de S\~ao Paulo, Brazil}

\affil[5]{Departamento de  Matem\'atica Aplicada, Universidad Pontificia Comillas, Spain}

\newtheorem{theorem}{Theorem}[section]

\newtheorem{definition}[theorem]{Definition}
\newtheorem{proposition}[theorem]{Proposition}

\numberwithin{equation}{section}
\begin{document}
	\maketitle
	\begin{abstract}
        This paper presents an extension of the unfolding operator technique,  initially applied to two-dimensional domains, to the realm of three-dimensional thin domains. The advancement of this methodology is pivotal, as it enhances our understanding and analysis of three-dimensional geometries, which are crucial in various practical fields such as engineering and physics. Our work delves into the asymptotic behavior of solutions to a reaction-diffusion equation with Neumann boundary conditions set within such a oscillatory 3-dimensional thin domain. The method introduced enables the deduction of effective problems across all scenarios, tackling the intrinsic complexity of these domains. This complexity is especially pronounced due to the possibility of diverse types of oscillations occurring along their boundaries.
	\end{abstract}

	\noindent \emph{Keywords:} Reaction-diffusion equations, Neumann boundary condition, Thin domains, Oscillatory boundary, Homogenization. \\
	\noindent 2010 \emph{Mathematics Subject Classification.} 35B25, 35B40, 35J92.

	\section{Introduction}

    Thin domains with oscillating boundaries have garnered significant interest in the research community due to their natural appearance in modeling real-world phenomena. Most of the applications in real-life problems posses boundaries that are not perfectly smooth, presenting a lot of irregularities that affects significantly the effective behavior of the considered model, which, in general, involves a Partial Differential Equation (PDE) posed in thin domains with rough boundary. As researchers could see, such distortions on the boundary may have significantly influence on the effective behavior of the considered PDE. This has been motivating many investigations to develop and employ asymptotic analysis techniques to determine the effective behavior on a lower-dimensional domain. 
    
    For instance, thin structures with oscillating boundaries are prevalent in various scientific fields, such as fluid dynamics (lubrication), solid mechanics (thin rods, plates or shells) and even physiology (blood circulation), \cite{Hamrock,Smith,Tabeling}. Refer to \cite{AnBra, BouCiu,BGG07B} for specific examples of applied problems. 
 
     In this work, we extend the unfolding operator introduced in \cite{AM2} from two-dimensional to three-dimensional thin domains. This extension is significant from both a practical and theoretical standpoint. Practically, three-dimensional domains have substantial applications across various fields such as engineering and physics, making their comprehension vital for the progress in these disciplines. Theoretically, the analysis of these domains introduces additional complexity due to the potential for combining different types of oscillations at their boundaries. This enriches and complicates their study, presenting deeper and more varied mathematical challenges
	
	Throughout this paper, we consider thin domains with different kind of oscillations at the top  boundary, given by
	\begin{equation*}
	R^\varepsilon=\left\lbrace (x_1,x_2,x_3)\in \mathbb{R}^3:(x_1,x_2)\in \omega,\quad 0<x_3<\eps^\gamma g\left(\dfrac{x_1}{\varepsilon^\alpha},\dfrac{x_2}{\varepsilon^\beta}\right)\right\rbrace,\,\,\,\,0<\varepsilon\ll 1,\, 
	\end{equation*}
	where $0\leq \alpha< \beta,\,0<\gamma$, $\omega\subset\mathbb{R}^2$ is an open, bounded, connected and regular set and $g$ is a bounded periodic function not necessarily smooth. For an example of oscillatory boundary, see Figure \ref{fig21}.
    
    Without loss of generality, we can consider $\gamma=1$ since, if this is not the case, the thickness of the domain can be redefined as $\varepsilon=\eps^\gamma$. Therefore, the thin domains considered are given by
    \begin{equation}\label{TDs}\tag{D}
	R^\varepsilon=\left\lbrace (x_1,x_2,x_3)\in \mathbb{R}^3:(x_1,x_2)\in \omega,\quad 0<x_3<\varepsilon g\left(\dfrac{x_1}{\varepsilon^\alpha},\dfrac{x_2}{\varepsilon^\beta}\right)\right\rbrace,\,\,\,\,0<\varepsilon\ll 1,\, 
	\end{equation}

	Note that various values of $\alpha$ and $\beta$ greater than zero will result in distinct kinds of oscillatory patterns or roughness at the boundary. Much more complex behaviors appear than in two dimensions. In fact, it is possible to have oscillations of one type in the $x_1$ -direction and oscillations of another type in $x_2$. Even in the simplest case, $\alpha$ or $\beta$ is zero, a locally periodic behavior is obtained.
	
	Considering that the focus of this article lies in elaborating the technique of adapting the unfolding operator to a three-dimensional context, rather than on the specific problems it solves, we have chosen to illustrate this approach through a particular elliptic problem. This choice underscores our understanding that, while the method holds potential for broader application across more intricate equations, our emphasis remains on the nuances of the geometrical adaptation. Then, in the sequel, we address the following elliptic boundary-value problem.
	\begin{equation}\label{problem}\tag{P}
	\left\lbrace\begin{array}{lll}
	-\Delta u^\varepsilon +u^\varepsilon=f^\varepsilon\quad\textrm{in}\quad R^\varepsilon,\\
	\dfrac{\partial u^\varepsilon}{\partial \nu^\varepsilon}=0\quad \textrm{on}\quad\partial R^\varepsilon,
	\end{array}\right.
	\end{equation}
	where $\nu^\varepsilon$ is the normal unit outward vector to $\partial R^\varepsilon$, $f^\varepsilon\in L^{2}(R^\varepsilon)$.
	
	The variational formulation of \eqref{problem} is
	\begin{equation}\label{variationalproblem}
	\int_{R^\varepsilon}\left(\nabla u^\varepsilon\nabla\varphi +u^\varepsilon\varphi\right) dx=\int_{R^\varepsilon} f^\varepsilon\varphi dx
	\end{equation}
	for all $\varphi\in H^1(R^\varepsilon)$.
	
	Lax-Milgram Theorem ensures the existence and uniqueness of solutions for problem \eqref{variationalproblem} for any fixed $\varepsilon>0$. It is important to point out that the solutions' behavior is primarily determined by the value of the parameters $\alpha$ and $\beta$ respect to the thickness of the domain. Additionally, given that the thickness of the domain $R^\varepsilon$ is of order $\eps$, it is anticipated that the sequence of solutions $u_\varepsilon$ will converge to a function with $n-1$ variables as $\varepsilon$ approaches zero.
	
	Thus, the aim of this article is to present the unfolding method as a general approach that simplifies the process of obtaining the homogenized limit problem  for problem \eqref{problem} considering all possible combinations of the values of $\alpha$ and $\beta$. Additionally, just like in the two-dimensional case, non-smooth periodic oscillatory boundaries may be considered.

	\begin{figure}[!htb]
		\centering{
			\scalebox{0.35}{\includegraphics{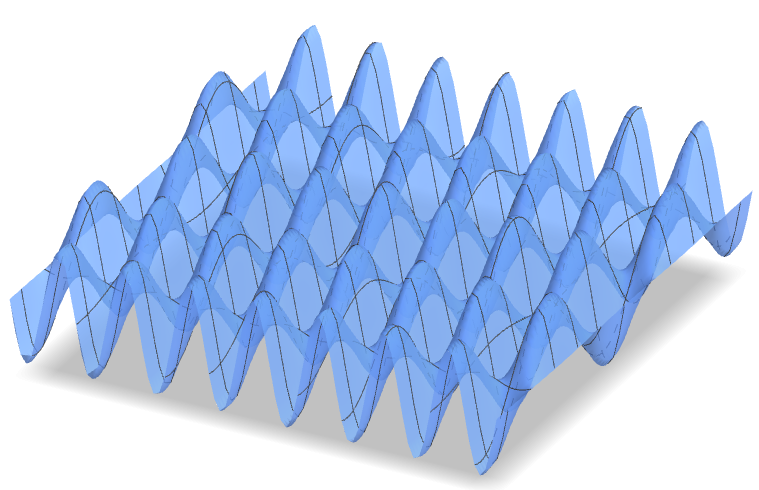}}
		}
		\caption{3d rough thin domain}
		\label{fig21}
	\end{figure}

    As previously mentioned, depending on the value of the parameters $\alpha$ and $\beta$, we obtain distinct lower dimensional limit problems taking the general form
    \begin{equation}
        \left\lbrace
            \begin{array}{c}
                 -(q_1u_{x_1})_{x_1}-(q_2u_{x_2})_{x_2}+u=\Bar{f}\quad\textrm{in}\quad \omega, \\
                  (q_1u_{x_1},q_2u_{x_2})\eta =0 \quad\textrm{on}\quad\partial\omega,
            \end{array}
        \right.
    \end{equation}  
    where $\eta$ denotes the unit outward normal vector to the boundary $\partial\omega$ and the homogenized coefficients $q_i$, $i=1,2$, varies as accordingly to the values of $\alpha$, and $\beta$. The function $\bar f$ is related to the function $f^\eps$.  We left the description of the coefficients of the different limitting problems for the next sections.

    The key ingredient of the current work is the adaptation of the unfolding method introduced in \cite{CiDaGri} for a higher dimensional thin domain with rough boundary. First, the unfolding method was developed for problems with oscillating coefficients and perforated domains (see \cite{Cioranescu}) and then adapted for thin domains with oscillating boundary (see \cite{AM,AM2,JM}). Besides defining the unfolding operator, which allows us to work within a fixed framework, one must address the challenge, common to all homogenization problems, of determining the limit of the partial derivatives. To achieve this, our primary innovation lies in constructing suitable operators that facilitate the determination of the limit of the unfolded gradients. Unlike in previous works, such as \cite{Cioranescu,AM2}, we encounter a unique complication: it is not possible to identify a single limit function whose gradient matches the limit of the unfolded gradients, owing to the differing scales in the $x_1$ and $x_2$ directions. Consequently, these differing scales lead us to identify various limit functions within distinct Lebesgue-Bochner spaces. The emergence of these diverse spaces necessitates a slightly modified corrector equation for the critical oscillations (i.e., of order $\varepsilon$) and also compels us to employ different test functions for each scenario, due to the interaction between the oscillation orders in each direction. Furthermore, it is pertinent to highlight that our ideas and techniques can be easily adapted to scenarios involving higher dimensions, though it must be noted that in such cases, the notation might become excessively cumbersome.

    Now, we perform a brief overview on the literature.  For pioneering works, we mention \cite{AnBra,BC,HaleRaugel} where thin domains with or without oscillatory boundaries were studied. In \cite{AnBra}, thin domains with oscillatory boundary were considered in context of $\Gamma$-convergence and in \cite{BC}, where the Stokes system were studied. In \cite{HaleRaugel}, the authors studied a parabolic equation and its asymptotic dynamics in a standard thin domain (i.e. without oscillations). 
    
    More recently, the works \cite{arrieta2011,AJM,AM,AM2,MPov,JIM,JM} studied elliptic and parabolic problems on thin domains with rough boundary. These works used several techniques from the classical extension operator or asymptotic expansions to the most recent ones, using the unfolding operator method. 
    It is important to emphasize that in \cite{AM,AM2}, the pioneering works with respect to the unfolding method in oscillating thin domains, a profound study of the method was performed for two-dimensional domains.

\section{Notations and Preliminary Results}
	To study the convergence of the solutions of \eqref{problem}, we fix some notations and recall results concerning the method of unfolding operator which will be essential trough the paper for our analysis.
	
	We consider three-dimensional thin domains defined by \eqref{TDs}. Observe that this domain have an oscillatory behavior at its top boundary. The parameters $\varepsilon$, $\alpha$ and $\beta$  are positive and the function $g$ is a periodic function.
Firstly, we would like to clarify that, for the sake of simplicity in notation we say that a function in $\mathbb{R}^2$ is $L$-periodic if there exist two periods, $L_1$ and $L_2$, such that the function is $L_1$-periodic in the first variable and $L_2$-periodic in the second. 


 The function $g$ which defines the oscillatory boundary satisfies the following hypothesis:
	
 \noindent($\mathbf{H_g}$) {\sl $g : \mathbb{R}^2\rightarrow \mathbb{R}$ is a strictly positive, bounded, lower semicontinuous, $L$-periodic function.  Moreover, we define $$g_0 = \min_{(x_1,x_2) \in \mathbb{R}^2} g(x_1,x_2) \quad  \textrm{ and } \quad g_1 = \max_{(x_1,x_2) \in \mathbb{R}^2} g(x_1,x_2)$$ 
		so that $0<g_0\leq g(x_1,x_2)\leq g_1$ for all $(x_1,x_2)\in\mathbb{R}^2$}.

	Recall that lower semicontinuous means that $g(x_1^0,x_2^0)\leq \displaystyle\liminf_{(x_1,x_2)\rightarrow (x_1^0,x_2^0)}g(x_1,x_2)$, $\forall (x_1^0,x_2^0)\in\mathbb{R}^2$. 
  
  We will now establish the typical notation for the unfolding operator, adapted specifically to the case of a three-dimensional oscillating thin domain. This tailored notation is essential in ensuring clarity and precision in our definition of the unfolding operator, providing researchers with a standardized framework.

  Let us consider a rectangular grid in $\mathbb{R}^2$ taking into account the periodicity of the problem, each rectangle $\omega_{i,j}$ has a width of $L_1$ and a height of $L_2$: 
  $$ \omega_{i,j}=[iL_1,(i+1)L_1)\times[jL_2,(j+1)L_2),\quad (i,j)\in \mathbb{Z}^2.$$
  
  By analogy with the one dimensional case, for each $(x_1,x_2) \in \mathbb{R}^2$, $[x_1,x_2]_L$ denotes the bottom left vertex of the rectangle where the point is. For instance, if $(x_1,x_2) \in \omega_{i,j}$ we have $[x_1,x_2]_L=(iL_1,jL_2)$. We also define $\{x_1,x_2\}_L=(x_1,x_2)-[(x_1,x_2)]_L$.

  \begin{figure}[!htb]
		\centering{
			\scalebox{0.5}{\includegraphics{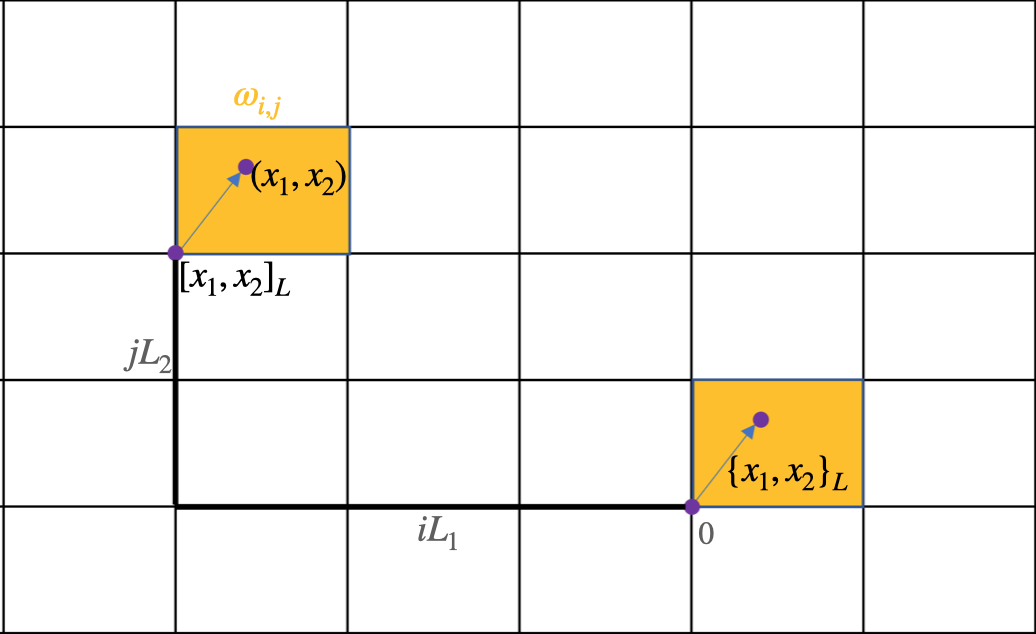}}
		}
		\caption{2d grid from the periods}
	\end{figure}

  In particular, for each $\eps>0$ we can write:
$$\left(\frac{x_1}{\varepsilon^\alpha},\frac{x_2}{\varepsilon^\beta}\right)=\Big[\frac{x_1}{\varepsilon^\alpha},\frac{x_2}{\varepsilon^\beta}\Big]_L+\Big\{\frac{x_1}{\varepsilon^\alpha},\frac{x_2}{\varepsilon^\beta}\Big\}_L.$$

Notice that,  if \(\alpha=\beta\), as in classical periodic homogenization, it is straightforward to obtain the relationship between the macro and micro scales. In fact, the rescaled rectangles $\eps^\alpha \omega_{i,j}$ represent the microscopic scale and we have
$$(x_1,x_2)=\eps^\alpha\Big(\Big[\frac{x_1}{\varepsilon^\alpha},\frac{x_2}{\varepsilon^\alpha}\Big]_L+\Big\{\frac{x_1}{\varepsilon^\alpha},\frac{x_2}{\varepsilon^\alpha}\Big\}_L\Big).$$

However, when $\alpha \neq \beta$,  the grid representing the microscopic scale is not formed by rectangles that are homothetic to $\omega_{i,j}$. In fact, we define the following family of rectangles
$$ \omega^\varepsilon_{i,j}=[iL_1\varepsilon^\alpha,(i+1)L_1\varepsilon^\alpha)\times[jL_2\varepsilon^\beta,(j+1)L_2\varepsilon^\beta),\quad (i,j)\in \mathbb{Z}^2$$
which are obtained  by shrinking by a factor $\eps^\alpha$ in the $x-$direction and by a factor $\eps^\beta$ in the $y-$direction. Consequently, we will use the following notation 
\begin{equation}
	\left(x_1,x_2\right)=\left(\varepsilon^\alpha\Big(\left[\frac{x_1}{\varepsilon^\alpha}\right]_{L_1}L_1+\left\lbrace\frac{x_1}{\varepsilon^\alpha}\right\rbrace_{L_1}\Big),\varepsilon^\beta\Big(\left[\frac{x_2}{\varepsilon^\beta}\right]_{L_2}L_2+\left\lbrace\frac{x_2}{\varepsilon^\beta}\right\rbrace_{L_2}\Big)\right), \; \forall (x_1,x_2)\in \mathbb{R}^2.
	\end{equation}
Notice that, according to the previous notation we have
$$\Big[\frac{x_1}{\varepsilon^\alpha},\frac{x_2}{\varepsilon^\beta}\Big]_L=\Big(\left[\frac{x_1}{\varepsilon^\alpha}\right]_{L_1}L_1,\left[\frac{x_2}{\varepsilon^\beta}\right]_{L_2}L_2\Big) \hbox{ and } \Big\{\frac{x_1}{\varepsilon^\alpha},\frac{x_2}{\varepsilon^\beta}\Big\}_L=\Big(\left\lbrace\frac{x_1}{\varepsilon^\alpha}\right\rbrace_{L_1},\left\lbrace\frac{x_2}{\varepsilon^\beta}\right\rbrace_{L_2}\Big).$$

Furthermore, given any domain $\omega \in \mathbb{R}^2$, we will distinguish between the rectangles $\omega^\varepsilon_{i,j}$ that are completely contained within $\omega$
and those that are not. Then, we denote by $\omega_\varepsilon$ and $\Lambda_\varepsilon$ the sets
	$$
	\omega_\varepsilon=\bigcup_{(i,j)\in S_\varepsilon}\overline{\omega^\varepsilon_{i,j}}\quad\textrm{and}\quad \Lambda_\varepsilon=\omega\backslash\omega_\varepsilon,
	$$
where $S_\varepsilon\subset\mathbb{Z}^2 $ is such that for each $(i,j)\in S_\varepsilon$, we have 
	$\overline{\omega^\varepsilon_{i,j}}\subset\omega$. 

Notice that $\omega_\eps\subset\omega$ and $\Lambda_\varepsilon \eto 0$ in the sense of the measure. Both sets will play an important role in defining the unfolding operator, see Figure \ref{fig21}. 

\begin{figure}[!htb]
		\centering{
			\scalebox{0.45}{\includegraphics{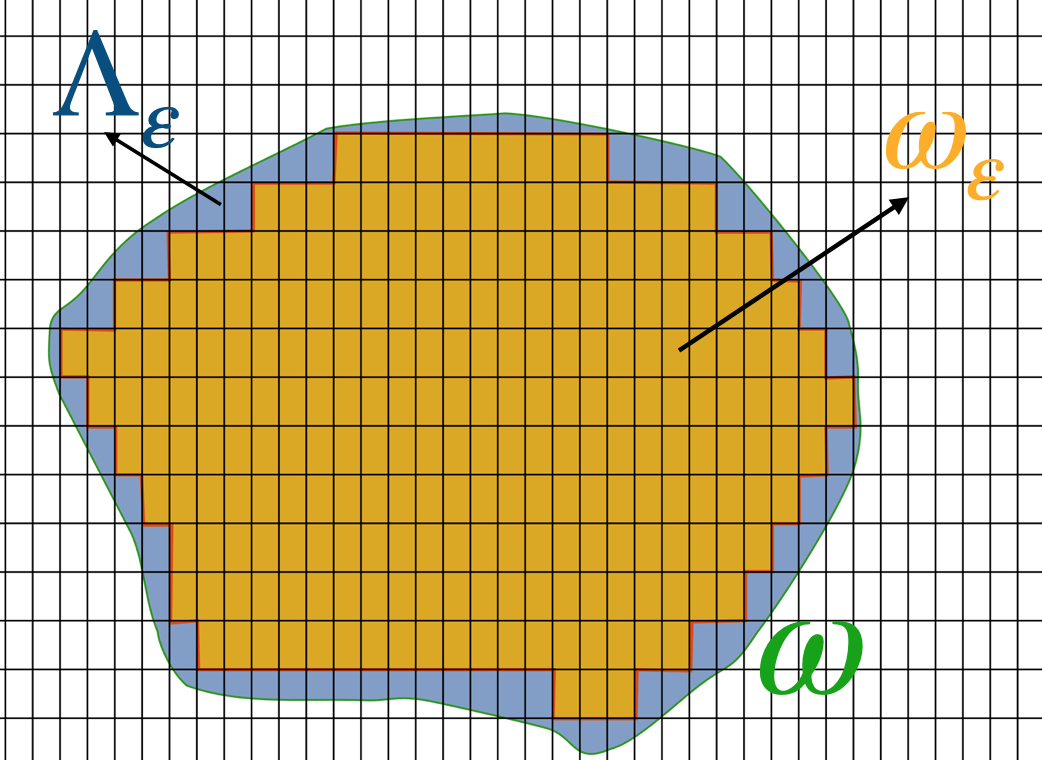}}
		}
		\caption{Sets $\omega$, $\omega_\eps$ and $\Lambda_\eps$}
	\end{figure}
Then, we can split the thin domain $R_\eps$ into two parts:
	$$
	R^\varepsilon_0=\left\{(x_1,x_2,x_3)\in\mathbb{R}^3:(x_1,x_2)\in\omega_\varepsilon,\, 0<x_3<\eps g\left(\dfrac{x_1}{\varepsilon^\alpha},\dfrac{x_2}{\varepsilon^\beta}\right) \right\}
	$$
	and
	$$
	R^\varepsilon_1=\left\{(x_1,x_2,x_3)\in\mathbb{R}^3:(x_1,x_2)\in\Lambda_\varepsilon,\, 0<x_3<\eps g\left(\dfrac{x_1}{\varepsilon^\alpha},\dfrac{x_2}{\varepsilon^\beta}\right) \right\}.
	$$

 We reserve the notation $Y^*$ for the reference cell which describes the oscillating domain
$$
	Y^*=\left\{(y_1,y_2,y_3)\in\mathbb{R}^3:0<y_1<L_1,\,0<y_2<L_2,\,0<y_3<g(y_1,y_2) \right\}.
	$$
Moreover,  since we have two different scales of oscillation for the variables \(x\) or \(y\), the following notation will be used:
\begin{align*}
	Y^*(y_1)=\left\{(y_2,y_3)\in\mathbb{R}^2:0<y_2<L_2,\,0<y_3<g(y_1,y_2) \right\}, \quad y_1 \in [0,L_1],\\
	Y^*(y_2)=\left\{(y_1,y_3)\in\mathbb{R}^2:0<y_1<L_1,\,0<y_3<g(y_1,y_2) \right\}, \quad y_2 \in [0, L_2],\\
    Y^*(y_1,y_2)=\left\{y_3\in\mathbb{R}:0<y_3<g(y_1,y_2) \right\}, \quad (y_1, y_2) \in L.
 \end{align*}

To enhance readability, we will represent \(x\) and \(y\) as vectors \(x = (x_1, x_2, x_3)\) and \(y = (y_1, y_2, y_3)\), respectively.

 Finally, recall some very commonly used notations in homogenization.  The subindex $\sharp$ denotes periodicity. For instance, $\mathcal{C}^\infty_{\sharp i}(\omega\times Y^*)$ consists of all functions $\varphi$ wich are obtained as restrictions to $\omega\times Y^*$ of functions in $\mathbb{R}^5$ which are $L_i-$periodic in the $y_i-$variable for $i=1,2,3$. For a measurable set $A\subset\mathbb{R}^n$, denote the average of a function $\varphi$ in $A$ as $\langle\varphi\rangle_{A}=\frac{1}{|A|}\int_A\varphi$.

	\subsection{The unfolding operator}
      In this section, we extend the definition of the unfolding operator that was originally given for 2-dimensional thin domains in \cite{AM2}. For the n-dimensional case, which exhibits the same type of oscillations in all directions, refer to \cite{JM}. Additionally, we present its main properties.

	\begin{definition}
		Let $\varphi$ be a Lebesgue-measurable function defined in $R^\varepsilon$. The unfolding operator $\mathcal{T}_\varepsilon$ acting on $\varphi$ is defined as the following function defined in $\omega\times Y^*$
		\begin{eqnarray*}
              \mathcal{T}_\varepsilon(\varphi)(x_1,x_2,y)=\left\lbrace\begin{array}{ll}
				\varphi\left(\varepsilon^\alpha \left[\frac{x_1}{\varepsilon^\alpha}\right]_{L_1}L_1+\varepsilon^\alpha y_1,\varepsilon^\beta\left[\frac{x_2}{\varepsilon^\beta}\right]_{L_2}L_2+\varepsilon^\beta y_2,\eps y_3\right),\,&(x_1,x_2,y)\in \omega_\varepsilon \times Y^*,\\ \\
				0, & (x_1,x_2,y)\in \Lambda_\varepsilon\times Y^*.
			\end{array}\right.
		\end{eqnarray*}
		
	\end{definition}
	
       Next, we establish some basic properties of $\unf{\varepsilon}$ that will play an essential role in the paper. These properties do not depend on the values of the parameters $\alpha$ and $\beta$.

	\begin{proposition}\label{Proposition_unfolding_properties}
 The unfolding operator has the following properties:
		\begin{enumerate}
			\item[(a)] $\unf{\varepsilon}$ is linear with respect to $+$ and $\cdot$ operations.
			\item[(b)] Let $\varphi$ be a Lebesgue function defined in $Y^*$ which is extended periodically in $(y_1,y_2)$. Then, $\varphi^\varepsilon(x_1,x_2,x_3)=\varphi\left(\frac{x_1}{\varepsilon^\alpha},\frac{x_2}{\varepsilon^\beta},\frac{x_3}{\eps}\right)$ is mesurable in $R^\varepsilon$ and
			$
			\mathcal{T}_\varepsilon(\varphi^\varepsilon)(x_1,x_2,y_1,y_2,y_3)=\varphi(y_1,y_2,y_3).
			$
			Moreover, if $\varphi\in L^2(Y^*)$, then $\varphi^\varepsilon\in L^2(R^\varepsilon)$;
			\item[(c)] For all $\varphi \in L^1(R^\varepsilon)$, we have
			$$\frac{1}{L_1L_2}\int_{\omega \times Y^*}\mathcal{T}_\varepsilon(\varphi)(x_1, x_2,y)dx_1dx_2dy=\frac{1}{\eps}\int_{R^\varepsilon_0}\varphi(x_1,x_2,x_3)dx;$$
			\item[(d)]$T_\varepsilon(\varphi)\in L^p\left(\omega \times Y^*\right)$ for all $\varphi \in L^p(R^\varepsilon)$ with
			\begin{equation*}
			\left|\left|\mathcal{T}_\varepsilon(\varphi)\right|\right|_{L^p\left(\omega \times Y^*\right)}=\left(\frac{L_1L_2}{\eps}\right)^{\frac{1}{p}}\left|\left|\varphi\right|\right|_{L^p(R_0^\varepsilon)}\leq \left(\frac{L_1L_2}{\eps}\right)^{\frac{1}{p}}\left|\left|\varphi\right|\right|_{L^p(R^\varepsilon)}.
			\end{equation*}
			\item[(e)] $
			\partial_{y_1}\unf{\varepsilon}(\varphi)=\varepsilon^\alpha \unf{\varepsilon}(\partial_{x_1}\varphi)\mbox{, }\partial_{y_2}\unf{\varepsilon}(\varphi)=\varepsilon^\beta\unf{\varepsilon}(\partial_{x_2}\varphi)\textrm{ and } \partial_{y_3}\unf{\varepsilon}(\varphi)=\eps \unf{\varepsilon}(\partial_{x_3}\varphi)\mbox{ a.e. } \omega \times Y^*$ for all $\varphi \in W^{1,p}(R^\varepsilon)$.
			\item[(f)] If $\varphi \in W^{1,p}(R^\varepsilon)$, then $\mathcal{T}_\varepsilon(\varphi)  \in L^p\left(\omega ;W^{1,p}(Y^*)\right)$ with
			$$
			\begin{gathered}
			\left|\left|\partial_{y_1}\mathcal{T}_\varepsilon(\varphi)\right|\right|_{L^p\left(\omega \times Y^*\right)}=\varepsilon^\alpha \left(\frac{L_1L_2}{\eps}\right)^{\frac{1}{p}}\left|\left|\partial_{x_1}\varphi\right|\right|_{L^p(R_0^\varepsilon)}\leq \varepsilon^\alpha\left(\frac{L_1L_2}{\eps}\right)^{\frac{1}{p}}\left|\left|\partial_{x_1}\varphi\right|\right|_{L^p(R^\varepsilon)},\\
			\left|\left|\partial_{y_2}\mathcal{T}_\varepsilon(\varphi)\right|\right|_{L^p\left(\omega \times Y^*\right)}=\varepsilon^\beta\left(\frac{L_1L_2}{\eps}\right)^{\frac{1}{p}}\left|\left|\partial_{x_2}\varphi\right|\right|_{L^p(R_0^\varepsilon)}\leq \varepsilon^\beta\left(\frac{L_1L_2}{\eps}\right)^{\frac{1}{p}}\left|\left|\partial_{x_2}\varphi\right|\right|_{L^p(R^\varepsilon)},\\
			\left|\left|\partial_{y_3}\mathcal{T}_\varepsilon(\varphi)\right|\right|_{L^p\left(\omega \times Y^*\right)}=\eps\left(\frac{L_1L_2}{\eps}\right)^{\frac{1}{p}}\left|\left|\partial_{x_3}\varphi\right|\right|_{L^p(R_0^\varepsilon)}\leq \eps\left(\frac{L_1L_2}{\eps}\right)^{\frac{1}{p}}\left|\left|\partial_{x_3}\varphi\right|\right|_{L^p(R^\varepsilon)}.
			\end{gathered}
			$$
		\end{enumerate}
	\end{proposition}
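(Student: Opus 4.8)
The plan is to derive all six items from one elementary observation: for every interior index $(i,j)\in S_\eps$, the operator $\unf{\eps}$ is, on the cell $\omega^\eps_{i,j}$, nothing but precomposition of $\varphi$ with the affine bijection
\[
\Phi^\eps_{i,j}\colon(y_1,y_2,y_3)\longmapsto\bigl(\eps^\alpha(iL_1+y_1),\,\eps^\beta(jL_2+y_2),\,\eps\,y_3\bigr),
\]
which has constant Jacobian determinant $\eps^{\alpha+\beta+1}$ and, by the $L$-periodicity of $g$ (so that $g(iL_1+y_1,jL_2+y_2)=g(y_1,y_2)$), maps $Y^*$ up to a null set onto $\{(x_1,x_2,x_3):(x_1,x_2)\in\omega^\eps_{i,j},\ 0<x_3<\eps\,g(x_1/\eps^\alpha,x_2/\eps^\beta)\}$. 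Because $\unf{\eps}(\varphi)$ equals, cellwise, $\varphi\circ\Phi^\eps_{i,j}$ — which does not depend on $(x_1,x_2)$ inside a cell — and vanishes on $\Lambda_\eps\times Y^*$, it is measurable whenever $\varphi$ is, and (a) is then immediate since $\unf{\eps}$ commutes with finite linear combinations and with products. For (b), $\varphi^\eps$ is obtained from $\varphi$ by periodic extension in $(y_1,y_2)$ followed by a fixed linear dilation, hence is measurable on $R^\eps$; moreover $\varphi^\eps\bigl(\Phi^\eps_{i,j}(y)\bigr)=\varphi(iL_1+y_1,jL_2+y_2,y_3)=\varphi(y_1,y_2,y_3)$ on each cell, which is exactly $\unf{\eps}(\varphi^\eps)\equiv\varphi$, and $\varphi^\eps\in L^2(R^\eps)$ follows from the same cellwise change of variables (there being only finitely many cells meeting $\Lambda_\eps$).

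For (c), decompose $R^\eps_0$ into the essentially disjoint pieces $\Phi^\eps_{i,j}(Y^*)$, $(i,j)\in S_\eps$, and change variables $x=\Phi^\eps_{i,j}(y)$ on each, obtaining
\[
\int_{R^\eps_0}\varphi\,dx=\sum_{(i,j)\in S_\eps}\eps^{\alpha+\beta+1}\int_{Y^*}\varphi\bigl(\Phi^\eps_{i,j}(y)\bigr)\,dy .
\]
On the other hand, since $\unf{\eps}(\varphi)(x_1,x_2,y)=\varphi\bigl(\Phi^\eps_{i,j}(y)\bigr)$ is constant in $(x_1,x_2)$ on $\omega^\eps_{i,j}$ and $|\omega^\eps_{i,j}|=L_1L_2\eps^{\alpha+\beta}$,
\[
\int_{\omega\times Y^*}\unf{\eps}(\varphi)\,dx_1\,dx_2\,dy=\sum_{(i,j)\in S_\eps}L_1L_2\eps^{\alpha+\beta}\int_{Y^*}\varphi\bigl(\Phi^\eps_{i,j}(y)\bigr)\,dy ,
\]
and comparing the two displays yields precisely the factor $L_1L_2/\eps$ of (c). Part (d) is then immediate: $\unf{\eps}(|\varphi|^p)=|\unf{\eps}(\varphi)|^p$ by the definition, so (c) applied to $|\varphi|^p$ gives the norm identity, while the final inequality is just $R^\eps_0\subset R^\eps$.

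For (e)--(f), fix $\varphi\in W^{1,p}(R^\eps)$. On $\Lambda_\eps\times Y^*$ both sides of the identities in (e) vanish. On a cell $\omega^\eps_{i,j}$ with $(i,j)\in S_\eps$, the restriction of $\varphi$ to the open set $\Phi^\eps_{i,j}(Y^*)\subset R^\eps$ lies in $W^{1,p}\bigl(\Phi^\eps_{i,j}(Y^*)\bigr)$, and since $\Phi^\eps_{i,j}$ is affine the composition $\varphi\circ\Phi^\eps_{i,j}$ lies in $W^{1,p}(Y^*)$ with $\nabla_y(\varphi\circ\Phi^\eps_{i,j})=\operatorname{diag}(\eps^\alpha,\eps^\beta,\eps)\,(\nabla_x\varphi)\circ\Phi^\eps_{i,j}$; reading off components gives $\partial_{y_1}\unf{\eps}(\varphi)=\eps^\alpha\unf{\eps}(\partial_{x_1}\varphi)$ and the analogues for $y_2,y_3$, that is (e). Since $\unf{\eps}(\varphi)$ is, on each cell, an element of $W^{1,p}(Y^*)$ independent of $(x_1,x_2)$, the map $(x_1,x_2)\mapsto\unf{\eps}(\varphi)(x_1,x_2,\cdot)$ is (piecewise constant, hence strongly) measurable from $\omega$ into $W^{1,p}(Y^*)$; combining this with (e) and the norm identity of (d) applied to $\varphi$ and to $\partial_{x_k}\varphi$ gives $\unf{\eps}(\varphi)\in L^p\bigl(\omega;W^{1,p}(Y^*)\bigr)$ together with the three displayed estimates, which is (f).

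I expect the only point needing genuine care to be the piecewise nature of $\unf{\eps}(\varphi)$ in (e)--(f): one should not try to differentiate $\unf{\eps}(\varphi)$ ``globally'' on $\omega\times Y^*$, but rather view it as a Bochner-space element, where — precisely because the variable $y$ ranges over the fixed reference cell $Y^*$ on every cell while $(x_1,x_2)$ plays only the role of a parameter — the chain rule for the affine bijection $\Phi^\eps_{i,j}$ applies cleanly and the cell interfaces (a null set, with no trace-matching required) contribute nothing. This route also has the merit of using only $\varphi\in W^{1,p}(R^\eps)$, with no regularity assumed on $\partial R^\eps$, which is important here since $g$ is merely lower semicontinuous. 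Everything else reduces to bookkeeping with the change of variables $\Phi^\eps_{i,j}$.
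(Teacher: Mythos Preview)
Your proposal is correct and takes essentially the same route as the paper: the paper only writes out part (c) in detail, deferring the rest to \cite{AM,AM2}, and its computation for (c) is exactly your cellwise decomposition plus change of variables, just without naming the affine map $\Phi^\eps_{i,j}$ explicitly. Your treatment of (a), (b), (d)--(f) fills in what the paper leaves to references, and your framing via $\Phi^\eps_{i,j}$ and its Jacobian $\eps^{\alpha+\beta+1}$ is a clean way to organize all six items at once.
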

	\begin{proof}
		The proof is very similar to \cite{AM,AM2}. Then, we just prove property c), which gives the relation between the integrals in the thin domain and integrals in the new fixed domain in $\mathbb{R}^5:$
\begin{align*}
& \frac{1}{L_1L_2}\int_{ \omega \times Y^*} \mathcal{T_\eps(\varphi)} (x_1,x_2,y) \, dx_1 dx_2 dy\\ & = \frac{1}{L_1L_2}\int_{ \omega^\eps \times Y^*}\varphi\left(\varepsilon^\alpha \left[\frac{x_1}{\varepsilon^\alpha}\right]_{L_1}L_1+\varepsilon^\alpha y_1,\varepsilon^\beta\left[\frac{x_2}{\varepsilon^\beta}\right]_{L_2}L_2+\varepsilon^\beta y_2,\eps y_3\right)dx_1 dx_2 dy\\
&= \frac{1}{L_1L_2}\sum_{(i,j) \in S^\eps}\int_{\omega^\eps_{i,j}}\int_{Y^*} \varphi\left(\varepsilon^\alpha \left[\frac{x_1}{\varepsilon^\alpha}\right]_{L_1}L_1+\varepsilon^\alpha y_1,\varepsilon^\beta\left[\frac{x_2}{\varepsilon^\beta}\right]_{L_2}L_2+\varepsilon^\beta y_2,\eps y_3\right)dx_1 dx_2 dy \\
&=\eps^\alpha \eps^\beta \sum_{(i,j) \in S^\eps} \int_{Y^*} \varphi\left(\varepsilon^\alpha iL_1+\varepsilon^\alpha y_1,\varepsilon^\beta j L_2+\varepsilon^\beta y_2,\eps y_3\right)dy \\
&=\frac{1}{\eps}\sum_{(i,j) \in S^\eps} \int_{\omega^\eps_{i,j}}\int_{0}^{\eps g(x_1/\eps^\alpha,x_2/\eps^\beta)} \varphi(x)\, dx\\
&=  \frac{1}{\eps}\int_{R_0^\eps} \varphi (x) dx.
\end{align*}
\end{proof}
	
	From now on, we will use the following notation for the rescaled norms in the thin domain $R^\varepsilon$
	\begin{eqnarray*}
		&&\left|\left|\left|\varphi\right|\right|\right|_{L^p(R^\varepsilon)}=\Big(\frac{1}{\eps}\Big)^{1/p}\left|\left|\varphi\right|\right|_{L^p(R^\varepsilon)},\forall\varphi\in L^p(R^\varepsilon),\\
		&&\left|\left|\left|\varphi\right|\right|\right|_{W^{1,p}(R^\varepsilon)}=\Big(\frac{1}{\eps}\Big)^{1/p}\left|\left|\varphi\right|\right|_{W^{1,p}(R^\varepsilon)}\forall\varphi\in W^{1,p}(R^\varepsilon).
	\end{eqnarray*}

	Notice that Proposition \ref{Proposition_unfolding_properties} is essential to pass to the limit since allows us to transform an integral over $R^\varepsilon$ into one over the fixed set $\omega \times Y^*$.
	Hence, the unfolding criterion for integrals (u.c.i.) plays an important role.
	
	\begin{definition}
		A sequence $\left(\varphi^\varepsilon\right)$ satisfies the unfolding criterion for integrals (u.c.i) if
		\begin{equation*}
		\frac{1}{\varepsilon}\int_{R_1^\varepsilon}|\varphi^\varepsilon|dx_1dx_2\rightarrow 0.
		\end{equation*}
	\end{definition}
	The proofs of the next Propositions are analogous to \cite{AM,AM2}. 
	\begin{proposition}
		Let $\varphi^\varepsilon$ be a sequence in $L^p(R^\varepsilon)$, $1<p\leq\infty$ with $|||\varphi^\varepsilon|||_{L^p(R^\varepsilon)}$ uniformly bounded. Then, $\varphi^\varepsilon$ satisfies u.c.i. Furthermore, 
		\begin{itemize}
			\item if $\psi^\varepsilon\in L^q(R^\varepsilon)$, $\frac{1}{p}+\frac{1}{q}=\frac{1}{r}$, $r>1$, then $\varphi^\varepsilon\psi^\varepsilon$ satisfies the u.c.i.
			\item if $\phi\in L^q(\omega)$, $\frac{1}{p}+\frac{1}{q}=1$, then $\varphi^\varepsilon\phi$ satisfies u.c.i.
			\item If $\psi^\varepsilon(x_1,x_2,x_3)=\psi\left(\frac{x_1}{\varepsilon^\alpha},\frac{x_2}{\varepsilon^\beta},\frac{x_3}{\eps} \right)$, where $\psi\in L^q(Y^*)$, then $\varphi^\varepsilon\psi^\varepsilon$ satisfies u.c.i.
		\end{itemize}
	\end{proposition}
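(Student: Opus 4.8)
The first assertion is the workhorse behind all three bullets, so I would establish it first. For $1<p<\infty$ the plan is Hölder's inequality on $R_1^\eps$ with exponents $p$ and $p'=p/(p-1)$, followed by the elementary bound $|R_1^\eps|\le\eps g_1|\Lambda_\eps|$ provided by $(\mathbf{H_g})$:
$$\frac1\eps\int_{R_1^\eps}|\varphi^\eps|\,dx\le\frac1\eps\,\|\varphi^\eps\|_{L^p(R^\eps)}\,|R_1^\eps|^{1/p'}\le|||\varphi^\eps|||_{L^p(R^\eps)}\,(g_1|\Lambda_\eps|)^{1/p'},$$
where in the last step the powers of $\eps$ collect to $0$ via $-1+\tfrac1{p'}=-\tfrac1p$; since $|||\varphi^\eps|||_{L^p(R^\eps)}$ is bounded and $|\Lambda_\eps|\to0$, the right-hand side tends to $0$. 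The case $p=\infty$ is the same without the Hölder step, $\frac1\eps\int_{R_1^\eps}|\varphi^\eps|\le g_1\|\varphi^\eps\|_{L^\infty(R^\eps)}|\Lambda_\eps|\to0$.

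For the first bullet I would reduce to what was just proved. The rescaled Hölder inequality $|||\varphi^\eps\psi^\eps|||_{L^r(R^\eps)}\le|||\varphi^\eps|||_{L^p(R^\eps)}\,|||\psi^\eps|||_{L^q(R^\eps)}$ (valid when $\tfrac1p+\tfrac1q=\tfrac1r$) shows — reading the hypothesis, as is customary here, as including that $|||\psi^\eps|||_{L^q(R^\eps)}$ is also uniformly bounded — that $|||\varphi^\eps\psi^\eps|||_{L^r(R^\eps)}$ is bounded with $r>1$, so the first assertion applies to $\varphi^\eps\psi^\eps$. For the second bullet, where $\phi\in L^q(\omega)$ depends only on $(x_1,x_2)$ and $\tfrac1p+\tfrac1q=1$, I would combine Hölder on $R_1^\eps$ with exponents $p,q$ and the bound $\|\phi\|_{L^q(R_1^\eps)}^q\le\eps g_1\int_{\Lambda_\eps}|\phi|^q\,dx_1dx_2$ to get
$$\frac1\eps\int_{R_1^\eps}|\varphi^\eps\phi|\,dx\le g_1^{1/q}\,|||\varphi^\eps|||_{L^p(R^\eps)}\Big(\int_{\Lambda_\eps}|\phi|^q\,dx_1dx_2\Big)^{1/q},$$
and the last integral tends to $0$ because $\mathbf 1_{\Lambda_\eps}|\phi|^q\to0$ in measure (as $|\Lambda_\eps|\to0$) and is dominated by $|\phi|^q\in L^1(\omega)$; again $p=\infty$ is immediate.

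The third bullet is the one that genuinely feels the distinct scales $\eps^\alpha,\eps^\beta$, and it is the step I expect to require the most care. A change of variables shows $|||\psi^\eps|||_{L^q(R^\eps)}$ is bounded, so if $\tfrac1p+\tfrac1q=\tfrac1r$ with $r>1$ the claim follows from the first bullet. The genuinely new situation is the borderline one $\tfrac1p+\tfrac1q=1$: writing $\tfrac1\eps=\eps^{-1/p}\eps^{-1/q}$ and applying Hölder on $R_1^\eps$ reduces the matter to proving $\tfrac1\eps\int_{R_1^\eps}|\psi^\eps|^q\,dx\to0$. Rescaling $x_3=\eps y_3$ and using the $L$-periodicity of $g$ and $\psi$, this integral equals $\int_{\Lambda_\eps}F\big(\tfrac{x_1}{\eps^\alpha},\tfrac{x_2}{\eps^\beta}\big)\,dx_1dx_2$ with $F(y_1,y_2)=\int_0^{g(y_1,y_2)}|\psi(y_1,y_2,y_3)|^q\,dy_3$, which is nonnegative, $L$-periodic and, by Fubini, integrable over one period ($L_1L_2\langle F\rangle_L=\|\psi\|_{L^q(Y^*)}^q$). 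I would finish by truncation: for $M>0$,
$$\int_{\Lambda_\eps}F\Big(\tfrac{x_1}{\eps^\alpha},\tfrac{x_2}{\eps^\beta}\Big)\,dx_1dx_2\le M|\Lambda_\eps|+\int_\omega(F-M)^+\Big(\tfrac{x_1}{\eps^\alpha},\tfrac{x_2}{\eps^\beta}\Big)\,dx_1dx_2\le M|\Lambda_\eps|+C\,\langle(F-M)^+\rangle_L,$$
where the last bound is obtained by covering $\omega$ with the cells $\omega^\eps_{i,j}$ and using that the integral of an $L$-periodic nonnegative function over a full cell equals $|\omega^\eps_{i,j}|$ times its mean, with $C$ a constant independent of $\eps$. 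Letting $\eps\to0$ (using $|\Lambda_\eps|\to0$) and then $M\to\infty$ (using $\langle(F-M)^+\rangle_L\to0$ by dominated convergence) gives the result. Apart from this truncation, the only real difficulty is the notational bookkeeping of the three powers of $\eps$ attached to $x_1,x_2,x_3$; once that is in place the whole argument runs parallel to the two-dimensional treatment in \cite{AM,AM2}.
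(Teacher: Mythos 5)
The paper gives no proof of this proposition, merely noting that it is ``analogous to \cite{AM,AM2}''; your argument is the standard self-contained adaptation and is correct in all essential steps. The first claim (Hölder plus the elementary bound $|R_1^\eps|\le\eps g_1|\Lambda_\eps|$, with the exponents combining so that $\eps^{-1+1/p'}=\eps^{-1/p}$ matches the rescaled norm) is right, as is reducing the first bullet to it via the rescaled Hölder inequality after supplying the — implicitly intended but unstated — hypothesis that $|||\psi^\eps|||_{L^q(R^\eps)}$ be uniformly bounded. The second bullet is handled correctly by peeling off the $x_3$-integration and appealing to absolute continuity of the Lebesgue integral on $\Lambda_\eps$. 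The third bullet is the one place where something genuinely new is needed, and your reduction to $\frac1\eps\int_{R_1^\eps}|\psi^\eps|^q\,dx\to0$, followed by rescaling $x_3=\eps y_3$ to obtain $\int_{\Lambda_\eps}F(x_1/\eps^\alpha,x_2/\eps^\beta)\,dx_1dx_2$ with $F\in L^1_\sharp$, and the truncation $F\le M+(F-M)^+$ with a cell-covering bound giving $C\langle(F-M)^+\rangle_L$, is exactly the right device for a periodic integrand that is merely $L^1$ (not $L^\infty$) over the cell. In particular the cell-covering step correctly handles the anisotropic scaling $\eps^\alpha$, $\eps^\beta$ in the two horizontal directions; nothing further is needed.
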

	\begin{proposition}\label{Proposition_convergence_test_functions}
		\begin{enumerate}
			
			\item Let $\varphi\in L^p(\omega )$. Then,
			$
			\mathcal{T}_\varepsilon\varphi\rightarrow \varphi\mbox{ strongly in }L^p\left(\omega \times Y^*\right).
			$
			
			\item Let $(\varphi^\varepsilon)$ be a sequence in $L^p(\omega )$ such that
			$
			\varphi^\varepsilon\rightarrow \varphi\mbox{ strongly in }L^p(\omega ).
			$
			Then,
			$$\mathcal{T}_\varepsilon\varphi^\varepsilon\rightarrow \varphi\mbox{ strongly in }L^p\left(\omega \times Y^*\right).$$
		\end{enumerate}
	\end{proposition}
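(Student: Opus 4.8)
The plan is to prove both parts of Proposition~\ref{Proposition_convergence_test_functions} by reducing everything to the density of smooth functions and an explicit estimate on the ``defect'' between $\mathcal{T}_\varepsilon\varphi$ and $\varphi$ for a fixed smooth $\varphi$. The key observation is that for $(x_1,x_2)\in\omega_\varepsilon$, the arguments appearing in the definition of $\mathcal{T}_\varepsilon(\varphi)$, namely $\bigl(\varepsilon^\alpha[x_1/\varepsilon^\alpha]_{L_1}L_1+\varepsilon^\alpha y_1,\ \varepsilon^\beta[x_2/\varepsilon^\beta]_{L_2}L_2+\varepsilon^\beta y_2\bigr)$, differ from $(x_1,x_2)$ by a quantity bounded by $(L_1\varepsilon^\alpha+L_2\varepsilon^\beta)$ in absolute value, since $(x_1,x_2)$ and that point lie in the same cell $\omega^\varepsilon_{i,j}$ of diameter $O(\varepsilon^{\alpha}+\varepsilon^{\beta})$. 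Note also that here $\varphi$ depends only on $(x_1,x_2)\in\omega$, so the $y_3$-variable is inert: using property~(c) (or (d)) of Proposition~\ref{Proposition_unfolding_properties} with the function $\varphi$ extended constantly in $x_3$, the integral $\int_{\omega\times Y^*}$ collapses (up to the factor $|Y^*(y_1,y_2)|$-weight coming from the $y_3$-slice) to integrals over $\omega_\varepsilon$, which is the natural setting.

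First I would prove part~1. Fix $\delta>0$ and choose $\psi\in C^\infty(\overline\omega)$ with $\|\varphi-\psi\|_{L^p(\omega)}<\delta$. Split
\begin{equation*}
\mathcal{T}_\varepsilon\varphi-\varphi=\mathcal{T}_\varepsilon(\varphi-\psi)+\bigl(\mathcal{T}_\varepsilon\psi-\psi\bigr)+(\psi-\varphi).
\end{equation*}
The first and third terms are controlled in $L^p(\omega\times Y^*)$ by a constant (depending only on $L_1,L_2,|Y^*|$) times $\delta$, using property~(d) of Proposition~\ref{Proposition_unfolding_properties} for the first and the trivial bound $\|\psi-\varphi\|_{L^p(\omega\times Y^*)}=|Y^*|^{1/p}\|\psi-\varphi\|_{L^p(\omega)}$ for the third (again because these functions are independent of $y$). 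For the middle term, uniform continuity of $\psi$ on the compact set $\overline\omega$ gives that $\mathcal{T}_\varepsilon\psi(x_1,x_2,y)\to\psi(x_1,x_2)$ uniformly on $\omega_\varepsilon\times Y^*$ as $\varepsilon\to0$ (the cell diameter tends to $0$), and on $\Lambda_\varepsilon\times Y^*$ we have $\mathcal{T}_\varepsilon\psi=0$ while $|\Lambda_\varepsilon|\to0$, so $\psi$ restricted there has vanishing $L^p$-norm; hence $\|\mathcal{T}_\varepsilon\psi-\psi\|_{L^p(\omega\times Y^*)}\to0$. Combining, $\limsup_{\varepsilon\to0}\|\mathcal{T}_\varepsilon\varphi-\varphi\|_{L^p(\omega\times Y^*)}\le C\delta$, and since $\delta$ is arbitrary the limit is zero.

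Then I would deduce part~2 from part~1 by another triangle-inequality argument: write
\begin{equation*}
\mathcal{T}_\varepsilon\varphi^\varepsilon-\varphi=\mathcal{T}_\varepsilon(\varphi^\varepsilon-\varphi)+\bigl(\mathcal{T}_\varepsilon\varphi-\varphi\bigr),
\end{equation*}
bound the first term using property~(d), namely $\|\mathcal{T}_\varepsilon(\varphi^\varepsilon-\varphi)\|_{L^p(\omega\times Y^*)}\le C\|\varphi^\varepsilon-\varphi\|_{L^p(\omega)}\to0$ (here $\varphi^\varepsilon-\varphi$, being a function on $\omega$, is regarded as an element of $L^p(R^\varepsilon)$ constant in $x_3$, and the factor $(L_1L_2/\varepsilon)^{1/p}$ is absorbed because $\|\varphi^\varepsilon-\varphi\|_{L^p(R^\varepsilon)}^p=\int_\omega|\varphi^\varepsilon-\varphi|^p\cdot\varepsilon g\,dx_1dx_2\le \varepsilon g_1\|\varphi^\varepsilon-\varphi\|_{L^p(\omega)}^p$), and bound the second term by part~1.

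The main obstacle is purely bookkeeping rather than conceptual: one must be careful that $\varphi$ is a function of $(x_1,x_2)$ only while $\mathcal{T}_\varepsilon$ is defined on functions on $R^\varepsilon\subset\mathbb{R}^3$, so every application of Proposition~\ref{Proposition_unfolding_properties} requires silently extending $\varphi$ to be independent of $x_3$ and tracking the resulting $\varepsilon$- and $|Y^*|$-factors; and one must handle the region $\Lambda_\varepsilon$ (where $\mathcal{T}_\varepsilon$ is set to $0$) separately, relying on $|\Lambda_\varepsilon|\to0$. Neither point is deep, but they are where an error would most naturally creep in. For $p=\infty$ the same scheme works with obvious modifications (the uniform-continuity step is in fact cleanest there, and density of smooth functions is replaced by direct uniform continuity of $\varphi$ if $\varphi\in C(\overline\omega)$, or one notes the statement for $p=\infty$ typically presupposes continuity).
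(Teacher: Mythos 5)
Your proof is correct and follows the standard approach (density of $C^\infty(\overline\omega)$ in $L^p(\omega)$, uniform continuity of the approximant plus $|\Lambda_\varepsilon|\to 0$ for the defect, and boundedness of $\mathcal{T}_\varepsilon$ via property~(d) to absorb the $(L_1L_2/\varepsilon)^{1/p}$ factor against the $\varepsilon$ coming from the thin-domain integration). The paper itself does not write out this proof but refers to \cite{AM,AM2}, where the same density/uniform-continuity argument is used, so your route is essentially the intended one; your bookkeeping of the constants, in particular the cancellation $(L_1L_2/\varepsilon)^{1/p}\cdot(\varepsilon g_1)^{1/p}=(L_1L_2 g_1)^{1/p}$ and the observation that the cell diameter is $O(\varepsilon^\alpha+\varepsilon^\beta)\to 0$ whenever $\alpha,\beta>0$ (the case $\alpha=0$ being handled separately in Section~3 with a different unfolding operator), is exactly what one needs to make the argument rigorous.
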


	Next, we recall a convergence result which does not depend on the value of the parameter $\alpha$ or $\beta$. To do that, we first introduce a suitable decomposition to functions $\varphi\in W^{1,p}(R^\varepsilon)$ where the geometry of the thin domains plays a crucial role. 
	We write $\varphi(x_1,x_2,x_3)=V(x_1,x_2)+\varphi_r(x_1,x_2,x_3)$ where $V$ is defined as follows
	\begin{equation}\label{Vauxiliar}
	V(x_1,x_2):=\frac{1}{\eps g_0}\int_0^{\eps g_0}\varphi(x_1,x_2,x_3) \, dx_3 \quad \mbox{ a.e. } (x_1,x_2)\in \omega.
	\end{equation}
	\begin{proposition}
		\label{propositionconvergence}
		Let $(\varphi^\varepsilon)\subset W^{1,p}(R^\varepsilon)$, $1<p<\infty$, with $\left|\left|\left|\varphi^\varepsilon\right|\right|\right|_{W^{1,p}(R^\varepsilon)}$ uniformly bounded and $V^\varepsilon(x_1,x_2)$ defined as in \eqref{Vauxiliar} for each $\varphi_{\eps}$. Then, there exists a function $\varphi\in W^{1,p}(\omega )$ such that, up to subsequences
		\begin{eqnarray*}
                && V^\eps \weto \varphi \quad \hbox{ weakly in } W^{1,p}(\omega)\quad\textrm{and strongly in}\quad L^p(\omega),\\
			&&\left|\left|\left|\varphi^\varepsilon-\varphi\right|\right|\right|_{L^{p}(R^\varepsilon)}\rightarrow 0,\\
			&&\mathcal{T}_\varepsilon(\varphi^\varepsilon) \to  \varphi\mbox{ strongly in } L^p\left(\omega ;W^{1,p}(Y^*)\right).
		\end{eqnarray*}
	\end{proposition}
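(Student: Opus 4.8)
The plan is to combine the uniform bound with standard compactness in $W^{1,p}(\omega)$, the commutation properties of $\unf{\eps}$ with derivatives from Proposition \ref{Proposition_unfolding_properties}(e)--(f), and Proposition \ref{Proposition_convergence_test_functions} to upgrade weak convergence of the two-dimensional average $V^\eps$ to the desired strong convergence of the unfolded sequence. First I would analyze $V^\eps$. From the Poincaré-type inequality in the $x_3$-direction on $R^\eps$ (the thickness is of order $\eps$, so the average over $(0,\eps g_0)$ is well controlled) one gets that $\|\varphi^\eps - V^\eps\|_{L^p(R^\eps)}$ is of order $\eps\,\|\partial_{x_3}\varphi^\eps\|_{L^p(R^\eps)}$, hence $|||\varphi^\eps - V^\eps|||_{L^p(R^\eps)} \to 0$; similarly $\|\nabla_{x_1,x_2} V^\eps\|_{L^p(\omega)}$ is bounded by (a constant times) $|||\varphi^\eps|||_{W^{1,p}(R^\eps)}$, and $\|V^\eps\|_{L^p(\omega)}$ is likewise bounded. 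Thus $(V^\eps)$ is bounded in $W^{1,p}(\omega)$, and by reflexivity and Rellich--Kondrachov, up to a subsequence $V^\eps \weto \varphi$ weakly in $W^{1,p}(\omega)$ and strongly in $L^p(\omega)$ for some $\varphi \in W^{1,p}(\omega)$. The estimate $|||\varphi^\eps - \varphi|||_{L^p(R^\eps)} \to 0$ then follows from the triangle inequality: $|||\varphi^\eps - \varphi|||_{L^p(R^\eps)} \le |||\varphi^\eps - V^\eps|||_{L^p(R^\eps)} + |||V^\eps - \varphi|||_{L^p(R^\eps)}$, and the second term equals $\big(\tfrac{1}{\eps}\int_{R^\eps} |V^\eps - \varphi|^p\,dx\big)^{1/p} \le C \|V^\eps - \varphi\|_{L^p(\omega)} \to 0$ since $V^\eps - \varphi$ does not depend on $x_3$ and the fiber length is at most $\eps g_1$.

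For the unfolded sequence, the natural route is to prove $\unf{\eps}(\varphi^\eps) \to \varphi$ in $L^p(\omega;W^{1,p}(Y^*))$ by controlling the $y$-gradient and the $L^p$ part separately. By Proposition \ref{Proposition_unfolding_properties}(f), $\|\partial_{y_1}\unf{\eps}(\varphi^\eps)\|_{L^p(\omega\times Y^*)} \le \eps^\alpha \big(\tfrac{L_1L_2}{\eps}\big)^{1/p}\|\partial_{x_1}\varphi^\eps\|_{L^p(R^\eps)} = \eps^\alpha L_1 L_2{}^{1/p}\,|||\partial_{x_1}\varphi^\eps|||_{L^p(R^\eps)} \to 0$ because $\alpha \ge 0$ and in fact, since $\alpha \geq 0$, one needs $\alpha > 0$ here for this particular term, so I would instead note that for any combination of parameters we at least have $\eps^\beta$ and $\eps$ going to zero for the $y_2$- and $y_3$-derivatives; for the $y_1$-derivative when $\alpha = 0$ one argues separately (and this is exactly where the paper later needs to identify nontrivial limit functions rather than just $\varphi$). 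Actually, for the present proposition the cleanest path avoids the gradient-to-zero claim: instead write $\unf{\eps}(\varphi^\eps) = \unf{\eps}(V^\eps) + \unf{\eps}(\varphi^\eps_r)$. Since $V^\eps$ depends only on $(x_1,x_2)$ and $V^\eps \to \varphi$ strongly in $L^p(\omega)$, Proposition \ref{Proposition_convergence_test_functions}(2) gives $\unf{\eps}(V^\eps) \to \varphi$ strongly in $L^p(\omega\times Y^*)$, and one checks the $y$-derivatives of $\unf{\eps}(V^\eps)$ vanish (as $V^\eps$ is independent of $x_3$ and constant on each cell in the relevant sense, modulo the $x_1,x_2$-dependence which scales out). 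The remainder $\varphi^\eps_r = \varphi^\eps - V^\eps$ satisfies $|||\varphi^\eps_r|||_{L^p(R^\eps)} \to 0$, so by Proposition \ref{Proposition_unfolding_properties}(d), $\|\unf{\eps}(\varphi^\eps_r)\|_{L^p(\omega\times Y^*)} = L_1L_2{}^{1/p}|||\varphi^\eps_r|||_{L^p(R^\eps)} \to 0$; and its $y$-gradient is controlled, via (f), by $\eps^{\min(\alpha,\beta,1)}$ times the bounded quantity $|||\nabla\varphi^\eps|||_{L^p(R^\eps)}$ — here $\min(\alpha,\beta,1)$ could be $0$ when $\alpha=0$, so this term is only $O(1)$, not $o(1)$.

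The main obstacle, therefore, is precisely the $y_1$-derivative of the unfolded sequence when $\alpha = 0$: unlike the $y_2$- and $y_3$-directions, the factor $\eps^\alpha = 1$ does not help, so $\partial_{y_1}\unf{\eps}(\varphi^\eps)$ need not vanish a priori. To handle it I would use the decomposition above more carefully: since $V^\eps$ carries all the $L^p$-mass of $\varphi^\eps$ in the limit and is independent of $x_3$, and since $\partial_{y_1}\unf{\eps}(\varphi^\eps) = \eps^\alpha\unf{\eps}(\partial_{x_1}\varphi^\eps)$, I would argue that $\unf{\eps}(\partial_{x_1}\varphi^\eps)$, being bounded in $L^p(\omega\times Y^*)$, has a weakly convergent subsequence, but its contribution to the difference $\unf{\eps}(\varphi^\eps) - \varphi$ in the $W^{1,p}(Y^*)$-norm — wait, that requires strong convergence. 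The correct resolution, and the one I expect the authors use, is that in this proposition one is only claiming the limit is the function $\varphi \in W^{1,p}(\omega)$, which has zero $y$-gradient; strong convergence $\unf{\eps}(\varphi^\eps) \to \varphi$ in $L^p(\omega;W^{1,p}(Y^*))$ then forces $\partial_{y_i}\unf{\eps}(\varphi^\eps) \to 0$ strongly, and this is deduced not from the a priori bound alone but from first establishing $\unf{\eps}(\varphi^\eps)\to\varphi$ in $L^p(\omega\times Y^*)$ (from the $V^\eps$-decomposition and u.c.i., controlling the $\Lambda_\eps$ part) together with the fact that $\unf{\eps}(\varphi^\eps)$ is bounded in $L^p(\omega;W^{1,p}(Y^*))$ and $Y^*$ is fixed, so a standard interpolation/lower-semicontinuity argument upgrades $L^p(\omega\times Y^*)$-convergence to $L^p(\omega;W^{1,p}(Y^*))$-convergence once one knows the target has the minimal ($y$-independent) structure. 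I would close by assembling these pieces: boundedness of $\unf{\eps}(\varphi^\eps)$ in $L^p(\omega;W^{1,p}(Y^*))$, identification of every weak-$W^{1,p}(Y^*)$ cluster point as $\varphi$ via the $L^p$-convergence, and a norm-convergence check $\|\unf{\eps}(\varphi^\eps)\|_{L^p(\omega;W^{1,p}(Y^*))} \to \|\varphi\|_{L^p(\omega;W^{1,p}(Y^*))} = \|\varphi\|_{L^p(\omega\times Y^*)}\cdot|Y^*|^{0}$-type equality coming from Proposition \ref{Proposition_unfolding_properties}(d),(f) and $|||\varphi^\eps - \varphi|||_{W^{1,p}} $ behavior, which in a uniformly convex space yields strong convergence.
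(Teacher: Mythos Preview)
Your treatment of $V^\eps$ and of the first two convergences matches the paper's proof essentially line by line: bound $V^\eps$ in $W^{1,p}(\omega)$ by averaging the rescaled bounds on $\varphi^\eps$, extract a weak limit $\varphi$ (strong in $L^p(\omega)$ by Rellich), use the one-dimensional Poincar\'e--Wirtinger inequality in $x_3$ to get $|||\varphi^\eps - V^\eps|||_{L^p(R^\eps)} \to 0$, and conclude $|||\varphi^\eps - \varphi|||_{L^p(R^\eps)} \to 0$ by the triangle inequality.

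For the third convergence you overthink it and end up with a genuine gap. The paper's argument is exactly the direct route you first write down and then abandon: from $|||\varphi^\eps - \varphi|||_{L^p(R^\eps)} \to 0$ and Proposition~\ref{Proposition_unfolding_properties}(d) one has $\unf{\eps}\varphi^\eps \to \varphi$ in $L^p(\omega\times Y^*)$, and Proposition~\ref{Proposition_unfolding_properties}(f) gives $\|\partial_{y_1}\unf{\eps}\varphi^\eps\|_{L^p}\le C\eps^{\alpha}$, $\|\partial_{y_2}\unf{\eps}\varphi^\eps\|_{L^p}\le C\eps^{\beta}$, $\|\partial_{y_3}\unf{\eps}\varphi^\eps\|_{L^p}\le C\eps$, all tending to zero; since $\varphi$ is $y$-independent this is strong convergence in $L^p(\omega;W^{1,p}(Y^*))$. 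Your worry about $\alpha=0$ is misplaced in context: for $\alpha=0$ the paper redefines the unfolding operator so that it acts only in $(x_2,x_3)$ (Section~3), and there is no $y_1$-variable at all; wherever the present proposition is invoked one has $\alpha>0$. Your fallback arguments do not close: the decomposition $\unf{\eps}(V^\eps)+\unf{\eps}(\varphi^\eps_r)$ runs into the identical $\eps^\alpha$ obstruction for $\partial_{y_1}$, as you yourself note, and the final ``weak convergence plus norm convergence in a uniformly convex space'' idea would require $\|\unf{\eps}\varphi^\eps\|_{L^p(\omega;W^{1,p}(Y^*))}\to\|\varphi\|_{L^p(\omega;W^{1,p}(Y^*))}$, which you cannot verify without already knowing the $y$-gradients vanish --- so that route is circular.
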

	\begin{proof}
		Let us prove that $V^\varepsilon$ is uniformly bounded:
		\begin{align*}
		\|V^\varepsilon\|_{L^p(\omega )}^p= \int_{\omega }\left|\dfrac{1}{\eps g_0}\int_0^{\eps g_0}\varphi^\varepsilon(x_1,x_2,x_3)dx_3\right|^p dx_1dx_2\leq c|||\varphi^\varepsilon|||_{L^p(R^\varepsilon)}^p\leq C,\\
		\|\partial_{x_1} V^\varepsilon\|_{L^p(\omega )}^p= \int_{\omega }\left|\dfrac{1}{\eps g_0}\int_0^{\eps g_0}\partial_{x_1}\varphi^\varepsilon(x_1,x_2,x_3)dx_3\right|^p dx_1dx_2\leq c|||\partial_{x_1}\varphi^\varepsilon|||_{L^p(R^\varepsilon)}^p\leq C,\\
		\|\partial_{x_2} V^\varepsilon\|_{L^p(\omega )}^p= \int_{\omega }\left|\dfrac{1}{\eps g_0}\int_0^{\eps g_0}\partial_{x_2}\varphi^\varepsilon(x_1,x_2,x_3)dx_3\right|^p dx_1dx_2\leq c|||\partial_{x_2}\varphi^\varepsilon|||_{L^p(R^\varepsilon)}^p\leq C.
		\end{align*}
		
		Thus, there is $\varphi\in W^{1,p}(\omega )$ such that, up to subsequences, 
		$$
		V^\varepsilon\rightharpoonup\varphi\qquad\mbox{s}-L^p(\omega )\mbox{ and w}-W^{1,p}(\omega ).
		$$
		Moreover, from the one-dimensional Poincaré-Wirtinger inequality we get
		\begin{align*}
		\int_0^{\eps g_\varepsilon(x_1,x_2)}|\varphi^\varepsilon(x_1,x_2,x_3)-V^\varepsilon(x_1,x_2)|^pdx_3=\int_0^{\eps g_\varepsilon(x_1,x_2)}\left|\varphi^\varepsilon-\dfrac{1}{\eps g_0}\int_0^{\eps g_0}\varphi^\varepsilon(x_1,x_2,x_3)dx_3\right|^p dx_3\\
		\leq C(\eps)^p \int_0^{\eps g_\varepsilon(x_1,x_2)}|\partial_{x_3} \varphi^\varepsilon(x_1,x_2,x_3)|^p dx_3.
		\end{align*}
		We integrate the inequality mentioned above in $\omega $ and divide it by $1/\varepsilon$  to derive the following
  
		$$
		|||\varphi^\varepsilon-V^\varepsilon|||_{L^p(R^\varepsilon)}\to0.
		$$
		We also get that 
		$$
		|||\varphi^\varepsilon-\varphi|||_{L^p(R^\varepsilon)}\to0
		$$
		and
		$$
		\|T_\varepsilon\varphi^\varepsilon-\varphi||_{L^p(\omega \times Y^*)}\to 0.
		$$
		Since
		\begin{align*}
		\left\|\partial_{y_1}\mathcal{T}_\varepsilon\varphi^\varepsilon\right\|_{L^p(\omega \times Y^*)}\leq C\varepsilon^\alpha ,\qquad \left\|\partial_{y_2}\mathcal{T}_\varepsilon\varphi^\varepsilon\right\|_{L^p(\omega \times Y^*)}\leq C\varepsilon^\beta \\\qquad\qquad\mbox{and }\qquad\left\|\partial_{y_3}\mathcal{T}_\varepsilon\varphi^\varepsilon\right\|_{L^p(\omega \times Y^*)}\leq C\eps \qquad\qquad
		\end{align*}
		we get
		$$
		\unf{\varepsilon}\varphi^\varepsilon\to \varphi\qquad\mbox{strongly in }L^p(\omega ;W^{1,p}(Y^*)).
		$$
	\end{proof}

\section{Oscillations just in one direction, \texorpdfstring{$\alpha=0$}{gamma}}
 If $\alpha=0$, the thin domain only exhibits oscillations in one variable, thereby placing us in the scenario most akin to those studied in two dimensions. Before we state the result of this section we adapt some definitions and notations to this particular case.
 In this case the unfolding operator acts just in the second variable because the thin domains just present one scale of oscillation. 
 
 We consider
  $$W=\left\{(x_1,x_2,y_2,y_3)\in\mathbb{R}^4:
 (x_1,x_2)\in \omega, \quad (y_2,y_3)\in Y^*(x_1)  \right\}.$$
 $$W_\eps=\left\{(x_1,x_2,y_2,y_3)\in\mathbb{R}^4:
 (x_1,x_2)\in \omega_\eps, \quad (y_2,y_3)\in Y^*(x_1)  \right\}.$$
 where $Y^*(x_1)=\left\{(y_2,y_3)\in\mathbb{R}^2:0<y_2<L_2,\,0<y_3<g(x_1,y_2) \right\}.$

 Then, we define the unfolding operator as follows
 \begin{eqnarray*}
              \mathcal{T}_\varepsilon(\varphi)(x_1,x_2,y_2,y_3)=\left\lbrace\begin{array}{ll}
				\varphi\left(x_1,\varepsilon^\beta\left[\frac{x_2}{\varepsilon^\beta}\right]_{L_2}L_2+\varepsilon^\beta y_2,\eps y_3\right),\,&(x_1,x_2,y_2,y_3)\in W_\eps,\\ 
				0, & (x_1,x_2)\in \Lambda_\varepsilon.
			\end{array}\right.
		\end{eqnarray*}
  Note that all the properties outlined in the previous section can be easily adapted to this specific situation. Additionally, since for each fixed value of $x_1$ a two-dimensional thin domain with oscillating boundary similar to those studied in \cite{AM2}, it is possible to pass to the limit by replicating the results presented in that reference. By Proposition \ref{Proposition_unfolding_properties}, one writes \eqref{variationalproblem} as fixed domain problem as follows
			\begin{equation*}
			\begin{gathered}
			\int_{W}\Big(\frac{\partial\mathcal{T}_\varepsilon(u^\varepsilon)}{\partial x_1}\frac{\partial\mathcal{T}_\varepsilon(\phi)}{\partial x_1} + \mathcal{T}_\varepsilon\Big(\frac{\partial u^\varepsilon}{x_2}\Big)\mathcal{T}_\varepsilon\Big(\frac{\partial \phi}{x_2}\Big)+\mathcal{T}_\varepsilon( u^\varepsilon)\phi\Big) dx_1dx_2dy_2dy_3
			+\dfrac{L_2}{\varepsilon}\int_{R_1^\varepsilon} \big(\nabla u^\varepsilon\nabla\phi+u^\varepsilon\phi\big) dx
		\\=\int_{W}\mathcal{T}_\varepsilon (f^\varepsilon) \mathcal{T}_\varepsilon(\phi) dx_1dx_2dy
			+\dfrac{L_2}{\varepsilon}\int_{R^\varepsilon_1}f^\varepsilon\phi dx, \qquad {\forall \phi \in H^1(\omega)}.
			\end{gathered}
			\end{equation*}
   It is then observed that for all terms, the transition to the limit is immediate, except for the term of the derivative in the direction of the oscillations, $x_2$.
However, for each $x_1$
  fixed, two-dimensional thin domains with oscillations on the upper boundary are recovered, the limit of the unfolding of the derivative in $x_2$ is known from \cite{AM2}. 
  Therefore, based on the results obtained in the literature, we can state the following theorem.
  \begin{theorem}[$\alpha=0$]
  Let $u^\varepsilon$ be the solution of \eqref{problem} with $f^\varepsilon\in L^2(R^\varepsilon)$ such that $|||f^\varepsilon|||_{L^2(R^\varepsilon)}\leq c$ with $c$ a positive constant independent of $\varepsilon$. Suppose that there is $f\in L^2(W)$ such that
		\begin{equation}
		\unf{\varepsilon} f^\varepsilon\rightharpoonup f\quad\textrm{in}\quad L^2(W).
		\end{equation}
		Then, there exists $u\in H^1(\omega )$, such that 
\begin{equation*}
		\unf{\varepsilon}u^\varepsilon\to u\quad\textrm{strongly in}\quad L^2(\omega ;H^1(Y^*(x_1)),
\end{equation*}

Moreover, depending on the value of $\beta$ we have
\begin{itemize}
\item{$0<\beta<1$.} There exists $u_1\in L^2\Big(\omega; H^1_{_{\sharp 2}}\big(Y^*(x_1)\big)\Big)$ with ${\displaystyle \frac{\partial u_1}{\partial y_3}=0}$ such that
\begin{align*}
\mathcal{T_\eps}\Big(\frac{\partial u^\eps}{\partial x_2}\Big)\weto
 \frac{\partial u}{\partial x_2} +\frac{\partial u_1}{\partial y_2} \quad  \hbox{weakly in }\; L^2\big(W\big),
\end{align*}
where $u_1$ is the unique function, up to constants, such that 
$$\frac{\partial u_1}{\partial y_2}(x_1,x_2,y_2) = \Big(-1 + \frac{1}{g(x_1,y_2) \langle\frac{1}{g(x_1,\cdot)}\rangle_{(0,L_2)}}\Big) \frac{\partial u}{\partial x_2}(x_1,x_2),
\quad \hbox{a.e. in }\omega\times (0,L_2)$$
and $u$ is the unique solution of the following Neumann problem:
 \begin{equation}
		\left\lbrace
		\begin{gathered} 
		-\frac{1}{|Y^*(x_1)|}\left(|Y^*(x_1)|u_{x_1}\right)_{x_1}-q_2(x_1)u_{x_2x_2}+u=\bar{f}\quad\textrm{in}\quad \omega ,\\
		\left(|Y^*(x_1)|u_{x_1},q_2(x_1)u_{x_2}\right)\cdot \eta=0\quad\textrm{on}\quad\partial \omega ,
		\end{gathered}
		\right.
		\end{equation}
where $\bar{f}=\langle f \rangle_{Y^*(x_1)}$ and  $q_2(x_1)=\dfrac{1}{\langle g(x_1,\cdot)\rangle_{(0,L_2)}\langle 1/{g(x_1,\cdot)}\rangle_{(0,L_2)}, }$

\item{$\beta=1$.} There exists $u_1\in L^2\Big(\omega; H^1_{\sharp 2}\big(Y^*(x_1)\big)\Big)$  such that
\begin{align*}
&\mathcal{T_\eps}\Big(\frac{\partial u^\eps}{\partial x_2}\Big)\weto
 \frac{\partial u}{\partial x_2}  +\frac{\partial u_1}{\partial y_2} \quad  \hbox{weakly in }\; L^2\big(W\big),\\
& \mathcal{T_\eps}\Big(\frac{\partial u^\eps}{\partial x_3}\Big)\weto \frac{\partial u_1}{\partial y_3} \quad  \hbox{weakly in }\; L^2\big(W\big),
\end{align*}
where $u_1= - X(x_1)(y_2, y_3) \frac{\partial u}{\partial x_2}(x_1,x_2)$ and $X(x_1)\in H^1_{\sharp 2}(Y^*(x_1))$  satisfying ${\displaystyle \int_{Y^*(x_1)} X(x_1) \; dy_2 dy_3  = 0}$ is the unique solution of the following problem 
\begin{equation*} 
\int_{Y^*(x_1)} \nabla X \nabla \psi \, dy_2 dy_3=  \int_{Y^*(x_1)} \frac{\partial \psi}{\partial y_2} \, dy_2 dy_3, \quad \forall \psi \in H^1_{\sharp 2}(Y^*(x_1)).
\end{equation*}
Moreover, $u$ is the unique solution of the following Neumann problem:
 \begin{equation}
		\left\{
		\begin{gathered}
		-\frac{1}{|Y^*(x_1)|}\left(|Y^*(x_1)|u_{x_1}\right)_{x_1}-q_2(x_1)u_{x_2x_2}+u=\bar{f}\quad\textrm{in}\quad \omega ,\\
		\left(|Y^*(x_1)|u_{x_1},q_2(x_1)u_{x_2}\right)\cdot \eta=0\quad\textrm{on}\quad\partial \omega ,
		\end{gathered}
		\right.
		\end{equation}
where $\bar{f}=\langle f \rangle_{Y^*(x_1)}$ and  $q_2(x_1)=\frac{1}{|Y^*(x_1)|} \int_{Y^*(x_1)} \Big\{ 1 - \frac{\partial X(x_1)}{\partial y_2}(y_2,y_3) \Big\} dy_2 dy_3$.
\item{$\beta>1$.} $u$ is the unique weak solution of the following Neumann problem 
\begin{equation*} 
\left\{
\begin{gathered}
-\frac{1}{|Y^*(x_1)|}\left(|Y^*(x_1)|u_{x_1}\right)_{x_1}-g_0(x_1) u_{{x_2}{x_2}} + u =  \bar{f},\quad \textrm{in}\quad\omega,\\
(|Y^*(x_1)|u_{x_1},g_0(x_1)u_{x_2})\cdot \eta=0\quad\textrm{on}\quad\partial \omega,
\end{gathered}
\right.
\end{equation*} 
where $\bar{f}=\langle f \rangle_{Y^*(x_1)}$ and $g_0(x_1)=\min_{y_2\in (0,L)}g(x_1,y_2)$.

\end{itemize}

\end{theorem}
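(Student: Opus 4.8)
The plan is to follow the standard unfolding strategy, but exploiting the reduction observed just before the statement: since $\alpha=0$, the variable $x_1$ is a passive parameter and for each fixed $x_1$ we are in the two-dimensional situation of \cite{AM2}. First I would establish the a priori bounds. Taking $\varphi=u^\varepsilon$ in \eqref{variationalproblem} and using $|||f^\varepsilon|||_{L^2(R^\varepsilon)}\le c$, one gets $|||u^\varepsilon|||_{H^1(R^\varepsilon)}\le C$ uniformly in $\varepsilon$. Then Proposition \ref{propositionconvergence} yields a limit $u\in H^1(\omega)$ with $V^\varepsilon\rightharpoonup u$ weakly in $W^{1,2}(\omega)$ and strongly in $L^2(\omega)$, together with $\mathcal{T}_\varepsilon(u^\varepsilon)\to u$ strongly in $L^2(\omega;H^1(Y^*(x_1)))$, which is exactly the first displayed convergence in the theorem. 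The bounds in Proposition \ref{Proposition_unfolding_properties}(f) give $\|\partial_{y_2}\mathcal{T}_\varepsilon(u^\varepsilon)\|_{L^2(W)}\le C\varepsilon^\beta$ and $\|\partial_{y_3}\mathcal{T}_\varepsilon(u^\varepsilon)\|_{L^2(W)}\le C\varepsilon$, while $\|\mathcal{T}_\varepsilon(\partial_{x_2}u^\varepsilon)\|_{L^2(W)}\le C$; hence, up to a subsequence, $\mathcal{T}_\varepsilon(\partial_{x_2}u^\varepsilon)\rightharpoonup\xi$ weakly in $L^2(W)$ for some $\xi$, and the task is to identify $\xi$ in each regime of $\beta$.

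The identification of $\xi$ is the heart of the argument and is carried out exactly as in the two-dimensional case of \cite{AM2}, applied fiberwise in $x_1$. For $0<\beta\le 1$ one uses property (e), $\partial_{y_2}\mathcal{T}_\varepsilon(u^\varepsilon)=\varepsilon^\beta\mathcal{T}_\varepsilon(\partial_{x_2}u^\varepsilon)$, to build the corrector: when $\beta=1$ one shows $\mathcal{T}_\varepsilon(u^\varepsilon)-\mathcal{T}_\varepsilon(V^\varepsilon)$, suitably rescaled, converges weakly in $L^2(\omega;H^1_{\sharp 2}(Y^*(x_1)))$ to a function $\hat u_1$, and passing to the limit in the unfolded variational formulation against test functions of the form $\varepsilon\,\phi(x_1,x_2)\psi(x_2/\varepsilon^\beta,x_3/\varepsilon)$ with $\psi\in H^1_{\sharp 2}$ one obtains the cell problem for $X(x_1)$ and the relation $\xi=\partial_{x_2}u+\partial_{y_2}u_1$, $\mathcal{T}_\varepsilon(\partial_{x_3}u^\varepsilon)\rightharpoonup\partial_{y_3}u_1$, with $u_1=-X(x_1)\partial_{x_2}u$; when $0<\beta<1$ the vertical scale is subcritical, forcing $\partial_{y_3}u_1=0$, and the cell problem degenerates to the one-dimensional ODE giving the explicit formula for $\partial_{y_2}u_1$. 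For $\beta>1$ the oscillation is too fast to be felt by the gradient: one argues that $\mathcal{T}_\varepsilon(\partial_{x_2}u^\varepsilon)$ is essentially supported, in the limit, over the flat part $0<y_3<g_0(x_1)$ — the oscillating cap has the wrong scale — so $\xi=\partial_{x_2}u$ and the effective coefficient in the $x_2$ direction collapses to $g_0(x_1)$; this is the $\beta>1$ regime of \cite{AM2} read with $x_1$ as a parameter.

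With $\xi$ identified, the final step is to pass to the limit in the unfolded variational identity displayed before the statement. The terms $\int_W \partial_{x_1}\mathcal{T}_\varepsilon(u^\varepsilon)\partial_{x_1}\mathcal{T}_\varepsilon(\phi)$, $\int_W \mathcal{T}_\varepsilon(u^\varepsilon)\phi$ and $\int_W\mathcal{T}_\varepsilon(f^\varepsilon)\mathcal{T}_\varepsilon(\phi)$ pass to the limit directly using Proposition \ref{Proposition_convergence_test_functions}, the weak convergence $V^\varepsilon\rightharpoonup u$ (note $\partial_{x_1}\mathcal{T}_\varepsilon(u^\varepsilon)=\mathcal{T}_\varepsilon(\partial_{x_1}u^\varepsilon)$ when $\alpha=0$) and the hypothesis $\mathcal{T}_\varepsilon f^\varepsilon\rightharpoonup f$; the boundary-layer terms over $R_1^\varepsilon$ vanish because $|\Lambda_\varepsilon|\to 0$ and the sequences satisfy the u.c.i. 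The oscillatory term $\int_W \mathcal{T}_\varepsilon(\partial_{x_2}u^\varepsilon)\mathcal{T}_\varepsilon(\partial_{x_2}\phi)$ converges to $\int_W \xi\,\partial_{x_2}\phi$; substituting the expression for $\xi$, integrating the corrector contribution over $Y^*(x_1)$ and using the cell problem to eliminate $u_1$, one recovers the stated homogenized coefficient $q_2(x_1)$ (respectively $g_0(x_1)$). Choosing $\phi$ independent of the fast variables then yields the weak formulation of the announced Neumann problem in $\omega$; Lax–Milgram gives uniqueness of $u$, so the whole sequence converges. The main obstacle is the construction and rigorous justification of the corrector $u_1$ in the critical case $\beta=1$ — controlling the rescaled differences $\mathcal{T}_\varepsilon(u^\varepsilon)-\mathcal{T}_\varepsilon(V^\varepsilon)$ in $L^2(\omega;H^1_{\sharp 2}(Y^*(x_1)))$ and choosing admissible oscillating test functions that respect the $y_3$-dependence of the reference cell $Y^*(x_1)$ — but since this is done fiberwise in $x_1$ it reduces verbatim to the analysis already carried out in \cite{AM2}.
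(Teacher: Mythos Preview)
Your proposal is correct and follows essentially the same approach as the paper: both reduce the $\alpha=0$ case to the two-dimensional results of \cite{AM2} applied fiberwise in $x_1$, passing to the limit in the unfolded variational identity where only the $\mathcal{T}_\varepsilon(\partial_{x_2}u^\varepsilon)$ term requires the corrector analysis from \cite{AM2}. In fact the paper gives almost no details here --- it simply observes that fixing $x_1$ recovers the 2D setting and invokes \cite{AM2} directly --- so your sketch is, if anything, more explicit than the paper's own treatment.
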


\section{Weak and resonant oscillations, \texorpdfstring{$0<\alpha<\beta\leq1$}{weak}}

 Now we present the central convergence result that will allow us to obtain the homogenized limit problem in all the situations of this section. Convergence of partial derivatives is achieved using auxiliary operators defined in appropriate function spaces, which enable us to separate different scales. In our exploration of oscillatory phenomena, it becomes evident that distinct cases emerge depending on the interplay between various orders of oscillation and the altitude order of the domain.  This approach ensures a systematic and schematic representation, contributing to a more thorough understanding of the intricate dynamics inherent in the relationship between oscillation orders and thickness of the domains.

 The first scenario considered in this section occurs if the frequency order of the oscillations is less than or equal to the order of the domain's height.

 \begin{theorem}\label{Theorem_conv_sequences_Laplacian}
Let $\varphi^\varepsilon\in W^{1,p}(R^\varepsilon)$, $1<p<\infty$, with $\left|\left|\left|\varphi^\varepsilon\right|\right|\right|_{W^{1,p}(R^\varepsilon)}$ uniformly bounded. Assuming $0 < \alpha \leq \beta \leq 1$, there exist $\varphi\in W^{1,p}(\omega ),$ $\varphi_1\in L^p(\omega ;W^{1,p}_\#(Y^*))$ 
$\varphi_2\in L^p\left(\omega\times(0,L_1);W^{1,p}_\#(Y^*(y_1))\right)$ 
and $\varphi_3\in L^p\left(\omega\times(0,L_1)\times(0, L_2);W^{1,p}(Y^*(y_1,y_2))\right)$ such that, up to subsequences,
		\begin{equation}\label{compa}
		\begin{gathered}
		\mathcal{T}_\varepsilon\varphi^\varepsilon \to  \varphi\mbox{ strongly in } L^p\left(\omega ;W^{1,p}(Y^*)\right)\\
		\mathcal{T}_\varepsilon\partial_{x_1} \varphi^\varepsilon\rightharpoonup \partial_{x_1}\varphi+\partial_{y_1}\varphi_1\mbox{ weakly in }L^p(\omega \times Y^*),\\
				\mathcal{T}_\varepsilon\partial_{x_2} \varphi^\varepsilon\rightharpoonup\partial_{x_2} \varphi+\partial_{y_2}\varphi_2\mbox{ weakly in }L^p(\omega \times Y^*) \\
  \mathcal{T}_\varepsilon\partial_{x_3} \varphi^\varepsilon\rightharpoonup \partial_{y_3}\varphi_3\mbox{ weakly in }L^p(\omega \times Y^*).
		\end{gathered}
		\end{equation}

So that:

\par\noindent i) If $\alpha<\beta<1$ then $\partial_{y_2}\varphi_1=\partial_{y_3}\varphi_1=0$, $\partial_{y_3}\varphi_2=0$.

\par\noindent ii) If $\alpha=\beta<1$ then
$\partial_{y_3}\varphi_1=0$ and $\varphi_2=\varphi_1\in L^p(\omega ;W^{1,p}_\#(Y^*))$.

\par\noindent iii) If $\alpha=\beta=1$, then  $\varphi_3=\varphi_2=\varphi_1\in L^p(\omega ;W^{1,p}_\#(Y^*))$.

\par\noindent iv) If $\alpha<\beta=1$, then $\partial_{y_2}\varphi_1=\partial_{y_3}\varphi_1=0$ and  $\varphi_3=\varphi_2\in L^p\left(\omega\times(0,L_1);W^{1,p}_\#(Y^*(y_1))\right)$.

 \end{theorem}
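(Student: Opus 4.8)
The plan is to combine the a priori bounds from Proposition~\ref{Proposition_unfolding_properties}(f) with a scaling analysis of the three unfolded partial derivatives, following the strategy used in \cite{AM2} but now carefully tracking three distinct microscopic directions. First I would apply Proposition~\ref{propositionconvergence} to the bounded sequence $\varphi^\varepsilon$: this immediately produces $\varphi \in W^{1,p}(\omega)$ together with the strong convergence $\mathcal{T}_\varepsilon \varphi^\varepsilon \to \varphi$ in $L^p(\omega;W^{1,p}(Y^*))$, which is the first line of \eqref{compa}. For the gradient part, from property (f) the three components $\partial_{y_i}\mathcal{T}_\varepsilon\varphi^\varepsilon$ are bounded in $L^p(\omega\times Y^*)$ by $C\varepsilon^\alpha$, $C\varepsilon^\beta$, $C\varepsilon$ respectively; by property (e), $\mathcal{T}_\varepsilon\partial_{x_1}\varphi^\varepsilon = \varepsilon^{-\alpha}\partial_{y_1}\mathcal{T}_\varepsilon\varphi^\varepsilon$ and similarly for the others. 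Hence each of $\mathcal{T}_\varepsilon\partial_{x_i}\varphi^\varepsilon$ is bounded in $L^p(\omega\times Y^*)$ and, up to a subsequence, converges weakly to some limit $\xi_i$. The core of the proof is to identify $\xi_1 = \partial_{x_1}\varphi + \partial_{y_1}\varphi_1$, $\xi_2 = \partial_{x_2}\varphi + \partial_{y_2}\varphi_2$, $\xi_3 = \partial_{y_3}\varphi_3$, and to locate the correctors in the right Bochner spaces.

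To identify $\xi_1$: pass to the limit in the identity $\varepsilon^\alpha\,\mathcal{T}_\varepsilon\partial_{x_1}\varphi^\varepsilon = \partial_{y_1}\mathcal{T}_\varepsilon\varphi^\varepsilon \to 0$ against test functions, showing $\partial_{y_1}\xi_1 = 0$ in a distributional sense would be wrong — rather, one writes $\mathcal{T}_\varepsilon(\varphi^\varepsilon - V^\varepsilon)$, uses that $V^\varepsilon \weto \varphi$ in $W^{1,p}(\omega)$, and exploits that $\mathcal{T}_\varepsilon(\partial_{x_1}V^\varepsilon) \to \partial_{x_1}\varphi$ (via Proposition~\ref{Proposition_convergence_test_functions}, since $\partial_{x_1}V^\varepsilon$ converges strongly in $L^p(\omega)$ after extracting — actually only weakly, so one needs the standard trick of testing against $Y^*$-constant functions to split off $\partial_{x_1}\varphi$). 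The remainder $\mathcal{T}_\varepsilon\partial_{x_1}\varphi^\varepsilon - \partial_{x_1}\varphi$ then has zero mean over $Y^*$ in a suitable averaged sense and is a curl-free-in-$y_1$ field, so it is $\partial_{y_1}$ of an $L^p(\omega;W^{1,p}_\#(Y^*))$ function $\varphi_1$; the $L_1$-periodicity in $y_1$ comes from matching values on opposite faces of adjacent cells $\omega^\varepsilon_{i,j}$, exactly as in \cite{Cioranescu}. The analogous arguments give $\varphi_2$ and $\varphi_3$, except the natural domain of dependence shrinks: because the $x_2$-oscillation is on scale $\varepsilon^\beta \gg \varepsilon^\alpha$ (when $\alpha<\beta$), within one $x_1$-microcell the variable $x_1/\varepsilon^\alpha$ is essentially frozen as a parameter $y_1$, forcing $\varphi_2 \in L^p(\omega\times(0,L_1);W^{1,p}_\#(Y^*(y_1)))$; similarly $\varphi_3$ sees both $y_1,y_2$ frozen, giving $\varphi_3 \in L^p(\omega\times(0,L_1)\times(0,L_2);W^{1,p}(Y^*(y_1,y_2)))$ with no periodicity in $y_3$ since there is no oscillation in that direction.

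Finally the four sub-cases i)--iv) are a bookkeeping of which scales coincide. When $\alpha<\beta<1$, the three scales $\varepsilon^\alpha, \varepsilon^\beta, \varepsilon$ are strictly separated, so the corrector $\varphi_1$ built from the slowest micro-scale cannot depend on the faster variables $y_2,y_3$ (any such dependence would be invisible at scale $\varepsilon^\alpha$), whence $\partial_{y_2}\varphi_1 = \partial_{y_3}\varphi_1 = 0$, and likewise $\partial_{y_3}\varphi_2 = 0$; one proves this by testing $\partial_{y_2}\mathcal{T}_\varepsilon\varphi^\varepsilon = \varepsilon^\beta \mathcal{T}_\varepsilon\partial_{x_2}\varphi^\varepsilon$ against $y_1$-oscillating test functions and using $\varepsilon^\beta/\varepsilon^\alpha \to 0$. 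When $\alpha=\beta<1$ the first two micro-scales merge, so $\varphi_1$ and $\varphi_2$ must be the same $L^p(\omega;W^{1,p}_\#(Y^*))$ function and only $\partial_{y_3}\varphi_1 = 0$ survives; when $\alpha=\beta=1$ all three merge into a single corrector in $L^p(\omega;W^{1,p}_\#(Y^*))$; when $\alpha<\beta=1$ the $x_2$ and $x_3$ scales coincide while the $x_1$ scale is slower, giving $\varphi_3=\varphi_2$ on $Y^*(y_1)$ and $\varphi_1$ independent of $y_2,y_3$. I expect the main obstacle to be the rigorous identification of $\varphi_2$ and $\varphi_3$ in the \emph{parametrized} spaces $L^p(\omega\times(0,L_1);W^{1,p}_\#(Y^*(y_1)))$: one must justify that the "frozen" variable $y_1$ genuinely plays the role of a parameter in the limit — this requires a careful choice of test functions of the form $\phi(x_1,x_2)\psi_1(y_1)\psi_2(y_1,y_2,y_3)$ and control of the cross terms coming from the mismatch between $x_1/\varepsilon^\alpha$ and its cell value, which is precisely the novelty the introduction advertises.
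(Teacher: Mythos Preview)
Your overall architecture is right --- apply Proposition~\ref{propositionconvergence} for the strong convergence, then identify the weak limits of the unfolded partial derivatives as $\partial_{x_i}\varphi$ plus a corrector --- and your reading of the four sub-cases in terms of which micro-scales merge is exactly the correct heuristic. However, the mechanism you sketch for \emph{producing} the correctors $\varphi_1,\varphi_2,\varphi_3$ is where the proposal diverges from the paper and where it is genuinely incomplete.

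You propose to first extract the weak limit $\xi_1$ of $\mathcal{T}_\varepsilon\partial_{x_1}\varphi^\varepsilon$, then argue that $\xi_1-\partial_{x_1}\varphi$ is ``curl-free-in-$y_1$'' and therefore equals $\partial_{y_1}\varphi_1$ for some $\varphi_1\in L^p(\omega;W^{1,p}_\#(Y^*))$. The gap is that this backwards route does not give you the $W^{1,p}(Y^*)$ regularity of $\varphi_1$ in the \emph{transverse} variables $y_2,y_3$: knowing only that a scalar field is a $y_1$-derivative tells you nothing about $\partial_{y_2}\varphi_1$ or $\partial_{y_3}\varphi_1$. The same issue is sharper for $\varphi_2$, where you need membership in the sliced space $L^p(\omega\times(0,L_1);W^{1,p}_\#(Y^*(y_1)))$ and it is not clear how a potential-finding argument would land you there.

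The paper avoids this by constructing the potentials \emph{before} passing to the limit. It introduces the nested recentered operators
\[
Z_\varepsilon^1=\varepsilon^{-\alpha}\Bigl(\mathcal{T}_\varepsilon\varphi^\varepsilon-\langle\mathcal{T}_\varepsilon\varphi^\varepsilon\rangle_{Y^*}\Bigr),\quad
Z_\varepsilon^2=\varepsilon^{-\beta}\Bigl(\mathcal{T}_\varepsilon\varphi^\varepsilon-\langle\mathcal{T}_\varepsilon\varphi^\varepsilon\rangle_{Y^*(y_1)}\Bigr),\quad
Z_\varepsilon^3=\varepsilon^{-1}\Bigl(\mathcal{T}_\varepsilon\varphi^\varepsilon-\langle\mathcal{T}_\varepsilon\varphi^\varepsilon\rangle_{Y^*(y_1,y_2)}\Bigr),
\]
so that $\partial_{y_1}Z_\varepsilon^1=\mathcal{T}_\varepsilon\partial_{x_1}\varphi^\varepsilon$, $\partial_{y_2}Z_\varepsilon^2=\mathcal{T}_\varepsilon\partial_{x_2}\varphi^\varepsilon$, $\partial_{y_3}Z_\varepsilon^3=\mathcal{T}_\varepsilon\partial_{x_3}\varphi^\varepsilon$ exactly, while the other $y$-derivatives of each $Z_\varepsilon^i$ carry the scale ratios $\varepsilon^{\beta-\alpha},\varepsilon^{1-\alpha},\varepsilon^{1-\beta}$ and are therefore bounded precisely under the hypothesis $0<\alpha\le\beta\le1$. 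After subtracting $\partial_{x_i}\varphi$ times a centered coordinate one gets mean-zero sequences $P_\varepsilon^i$, and Poincar\'e--Wirtinger on $Y^*$, on $Y^*(y_1)$, and on $Y^*(y_1,y_2)$ respectively yields uniform bounds in the three target Bochner spaces; the weak limits are $\varphi_1,\varphi_2,\varphi_3$. The choice of which cell to average over is the device that places each corrector in the correct parametrized space --- this is the concrete realisation of your ``frozen variable'' intuition, and it is the step your plan is missing. The vanishing of the extra derivatives in cases i)--iv) then falls out automatically from the scale ratios going to zero, rather than from an additional testing argument.

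Your periodicity argument (matching across adjacent cells) is essentially what the paper does, via a shift in $x_1$ by $\varepsilon^\alpha L_1$ and integration by parts, so that part is fine.
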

 \begin{proof}
 First convergence of \eqref{compa} was obtained in Proposition \ref{propositionconvergence}. Taking into account the order of the different microscopic scales involved in the problem, we define the operators $Z^i_\eps, i=1,2,3.$ These operators enable us to achieve the convergence of the unfolded gradient. $Z^1_\eps$ is associated with the smallest exponent, in our case $\alpha$, and is defined as follows
$$Z_\varepsilon^1(x_1,x_2,y)=\dfrac{1}{\varepsilon^\alpha }\left(\unf{\varepsilon}\varphi^\varepsilon(x_1,x_2,y)-\dfrac{1}{|Y^*|}\int_{Y^*}\unf{\varepsilon}\varphi^\varepsilon(x_1,x_2,y) dy\right).$$
Notice that $Z_\eps^1(x_1,x_2,\cdot,\cdot,\cdot,)$ has mean value zero in $Y^*$. Moreover, $Z_\varepsilon^1 \in L^p(\omega ;W^{1,p}(Y^*))$.

Now, for the second smallest exponent, which is $\beta$ in our case, we define $Z^2_\eps$ as follows
$$Z_\varepsilon^2(x_1,x_2,y)=\dfrac{1}{\varepsilon^\beta }\left(\unf{\varepsilon}\varphi^\varepsilon(x_1,x_2,y)-\dfrac{1}{|Y^*(y_1)|}\int_{Y^*(y_1)}\unf{\varepsilon}\varphi^\varepsilon(x_1,x_2,y) dy_2dy_3\right).$$
Observe that $Z_\eps^2(x_1,x_2,y_1,\cdot,\cdot,)$ has mean value zero in $Y^*(y_1)$. Moreover, $Z_\varepsilon^1 \in L^p(\omega\times(0,L_1) ;W^{1,p}(Y^*(y_1))$.

Finally, associated with the largest exponent, we define $Z^3_\eps$ as follows
$$Z_\varepsilon^3(x_1,x_2,y)=\dfrac{1}{\eps }\left(\unf{\varepsilon}\varphi^\varepsilon(x_1,x_2,y)-\dfrac{1}{|Y^*(y_1,y_2)|}\int_{Y^*(y_1,y_2)}\unf{\varepsilon}\varphi^\varepsilon(x_1,x_2,y) dy_2dy_3\right).$$
Observe that $Z_\eps^3(x_1,x_2,y_1,y_2,\cdot,)$ has mean value zero in $Y^*(y_1, y_2)$. Moreover, $Z_\varepsilon^3 \in L^p(\omega\times(0,L_1)\times(0,L_2);W^{1,p}(Y^*(y_1,y_2))$.

Note that the operators are defined in spaces where the derivatives are always uniformly bounded. In fact, we have
\begin{align*}
\partial_{y_1} Z_\varepsilon^1&=\unf{\varepsilon}\partial_{x_1}\varphi^\varepsilon, &\partial_{y_2} Z_\varepsilon^1&=\eps^{\beta-\alpha}\unf{\varepsilon}\partial_{x_2}\varphi^\varepsilon,& \partial_{y_3} Z_\varepsilon^1&=\eps^{1-\alpha}\unf{\varepsilon}\partial_{x_3}\varphi^\varepsilon.\\
  & &\partial_{y_2} Z_\varepsilon^2&=\unf{\varepsilon}\partial_{x_2}\varphi^\varepsilon,
		&\partial_{y_3} Z_\varepsilon^2&=\eps^{1-\beta}\unf{\varepsilon}\partial_{x_3}\varphi^\varepsilon.\\
   & & & &\partial_{y_3} Z_\varepsilon^3&=\unf{\varepsilon}\partial_{x_3}\varphi^\varepsilon. 
\end{align*}
Next, we proceed to define the following operators:
\begin{align*}
&P_\eps^1(x_1,x_2,y)=Z_\eps^1(x_1,x_2,y)-\frac{\partial\varphi}{\partial x_1}\Big(y_1-\frac{1}{|Y^*|}\int_{Y^*}y_1dy\Big),\\
&P_\eps^2(x_1,x_2,y)=Z_\eps^2(x_1,x_2,y)-\frac{\partial\varphi}{\partial x_2}\Big(y_2-\frac{1}{|Y^*(y_1)|}\int_{Y^*(y_1)}y_2dy_2dy_3\Big),\\
&P_\eps^3(x_1,x_2,y)=Z_\eps^3(x_1,x_2,y).
\end{align*}
Notice that all sequences have averages zero in their respective cells. By the Poincaré-Wirtinger inequality, we have that
\begin{align*}
&\| P_\varepsilon^1\|_{L^{p}(\omega\times Y^{*})}\leq C \|\nabla_{y_1y_2y_3} Z_\varepsilon^1\|_{L^p\left(\omega \times Y^*\right)}\leq c,\\
&\| P_\varepsilon^2\|_{L^{p}(\omega\times Y^{*})}\leq C \|\nabla_{y_2y_3} Z_\varepsilon^2\|_{L^p\left(\omega \times Y^*\right)}\leq c,\\
&\| P_\varepsilon^3\|_{L^{p}(\omega\times Y^{*})}\leq C \|\nabla_{y_3} Z_\varepsilon^3\|_{L^p\left(\omega \times Y^*\right)}\leq c,\\
\end{align*}
Hence, there exist $\varphi_1\in L^p(\omega ;W^{1,p}(Y^*))$ with $\partial_{y_1}\varphi_1=\partial_{y_3}\varphi_1=0$, $\varphi_2\in L^p\left(\omega\times(0,L_1);W^{1,p}(Y^*(y_1))\right)$ with $\partial_{y_3}\varphi_2=0$ and $\varphi_3\in L^p\left(\omega\times(0,L_1)\times(0, L_2);W^{1,p}(Y^*(y_1,y_2))\right)$ such that
\begin{align*}
		&P_\varepsilon^1\rightharpoonup \varphi_1\quad\textrm{weakly in}\quad L^p(\omega ;W^{1,p}(Y^*)),\\
		&P_\varepsilon^2\rightharpoonup \varphi_2\quad\textrm{weakly in}\quad L^p\left(\omega\times(0,L_1);W^{1,p}(Y^*(y_2))\right),\\
        &P_\varepsilon^3\rightharpoonup \varphi_3\quad\textrm{weakly in}\quad L^p\left(\omega\times(0,L_1)\times(0, L_2);W^{1,p}(Y^*(y_1,y_2))\right).
\end{align*}
Consequently, taking into account the definition of $Z^i_\eps$ and $P^i_\eps$ we get the desired convergences
\begin{align*}
		&\frac{\partial P_\varepsilon^1}{\partial y_1}=\mathcal{T}_\varepsilon\partial_{x_1} \varphi^\varepsilon-\partial_{x_1}\varphi \rightharpoonup \partial_{y_1}\varphi_1 \quad\textrm{weakly in}\quad L^p(\omega \times Y^*),\\
		&\frac{\partial P_\varepsilon^1}{\partial y_2}=\mathcal{T}_\varepsilon\partial_{x_2} \varphi^\varepsilon-\partial_{x_2}\varphi\rightharpoonup \partial_{y_2}\varphi_2 \quad\textrm{weakly in}\quad L^p(\omega \times Y^*),\\
&\frac{\partial P_\varepsilon^1}{\partial y_3}=\mathcal{T}_\varepsilon\partial_{x_3} \varphi^\varepsilon\rightharpoonup \partial_{y_3}\varphi_3 \quad\textrm{weakly in}\quad L^p(\omega \times Y^*).
\end{align*}

We will finally proof the periodicity of $\varphi_1$, focusing specifically on its periodicity with respect to $y_1$. The periodicity for other variables and functions, such as $\varphi_2$, can be established analogously. The periodicity of $\varphi_1$ with respect to $y_1$ results from the convergence of 
$$\int_{\omega\times Y^*} \Big(P^1_\eps(x_1,x_2,L_1,y_2,y_3)-P^1_\eps(x_1,x_2,0,y_2,y_3)\Big)\psi\,dx_1dx_2, \quad \forall \psi \in \mathcal{D}(\omega \times Y^*).$$
Utilizing both definitions, the $P_1^\epsilon$ and the unfolding operator, and performing a change of variables in the $x_1$ variable, we obtain by integration by parts
\begin{eqnarray}
		&&\int_{\omega \times Y^*}\unf{\varepsilon}\varphi^\varepsilon\dfrac{\psi(x_1-\varepsilon^\alpha L_1,x_2,y)-\psi(x_1,x_2,y)}{\varepsilon^\alpha} dx_1dx_2 dy\\
		&-&\int_{\omega \times Y^*}\left[\frac{\partial\varphi}{\partial x_1}\Big(y_1+L_1-\frac{1}{|Y^*|}\int_{Y^*}y_1dy\Big)-\frac{\partial\varphi}{\partial x_1}\Big(y_1-\frac{1}{|Y^*|}\int_{Y^*}y_1dy\Big)\right]\psi dx_1dx_2 dy\\
		&\stackrel{\varepsilon\to0}{\to}&-\int_{\omega \times Y^*}L_1\varphi\partial_{x_1}\psi dx_1dx_2-\int_{\omega \times Y^*}L_1\partial_{x_1}\varphi\psi dx_1dx_2dy\\
		&=&-\int_{\omega \times Y^*}L_1\varphi\partial_{x_1}\psi dx_1dx_2dy+\int_{\omega \times Y^*}L_1\varphi\partial_{x_1}\psi dx_1dx_2 dy=0,
		\end{eqnarray}

 \end{proof}

	Using the previous theorem, we will now obtain the homogenized limit for the different cases depending on the type of oscillations involved.
	
	\subsection{Resonant and Weak oscillations}
	In this subsection, we assume $0<\alpha<1$ and $\beta=1$.  That is, the oscillations in the $x_2$-direction are of order $\eps$ while the oscillations in the $x_1$-direction are much weaker, that is order $\eps^\alpha$.  In particular
 $$	R^\varepsilon=\left\lbrace (x_1,x_2,x_3)\in \mathbb{R}^3:(x_1,x_2)\in \omega,\quad 0<x_3<\varepsilon g\left(\dfrac{x_1}{\varepsilon^\alpha},\dfrac{x_2}{\varepsilon}\right)\right\rbrace,\,\,\,\,0<\varepsilon\ll 1,\, 0<\alpha<1.
$$

We can show the we state the main result.

	\begin{theorem}
		Let $u^\varepsilon$ be the solution of \eqref{problem} with $f^\varepsilon\in L^2(R^\varepsilon)$ such that $|||f^\varepsilon|||_{L^2(R^\varepsilon)}\leq c$ with $c$ a positive constant independent of $\varepsilon$. Suppose that there is $f\in L^2(\omega \times Y^*)$ such that
		\begin{equation}
		\unf{\varepsilon} f^\varepsilon\rightharpoonup  f\quad\textrm{in}\quad L^2(\omega \times Y^*).
		\end{equation}
		Then, there exist $u\in H^1(\omega )$, $u^1\in L^2(\omega \times H^1_\#(Y^*))$, with $\partial_{y_2}u^1=0$ and $\partial_{y_3}u^1=0$, and $u^2\in L^2\left(\omega\times(0,L_2);H^1_{\#2}(Y^*(y_1))\right)$, such that
		\begin{equation}
		\begin{gathered}
		\unf{\varepsilon}u^\varepsilon\to u\quad\textrm{strongly in}\quad L^2(\omega ;H^1(Y^*)),\\
		\unf{\varepsilon}\partial_{x_1}u^\varepsilon\rightharpoonup \partial_{x_1} u +\partial_{y_1}u^1\quad\textrm{weakly in}\quad L^2(\omega \times Y^*),\\
		\unf{\varepsilon}\partial_{x_2}u^\varepsilon\rightharpoonup\partial_{x_2}u+\partial_{y_2}u^2\quad\textrm{weakly in}\quad L^2(\omega \times Y^*),\\
        \unf{\varepsilon}\partial_{x_3}u^\varepsilon\rightharpoonup \partial_{y_3}u^2\quad\textrm{weakly in}\quad L^2(\omega \times Y^*).
		\end{gathered}
		\end{equation}
		The function $u$ is the weak solution of the following problem
        \begin{equation}\label{resoweak}
		\left\lbrace
		\begin{array}{l}
		-q_1u_{x_1x_1}-q_2u_{x_2x_2}+u=\bar{f}\quad\textrm{in}\quad \omega ,\\
		(q_1u_{x_1},q_2u_{x_2})\cdot \eta=0\quad\textrm{on}\quad\partial \omega ,
		\end{array}
		\right.
		\end{equation}
  where $\bar{f}=\langle f \rangle_{Y^*(x_1)}$ and 
  $$q_1=\dfrac{1}{\langle \hat g\rangle_{(0,L_1)}\langle 1/\hat{g}\rangle_{(0,L_1)}}, \quad q_2=\dfrac{1}{|Y^*|}\int_{Y^*}(1-\partial_{y_2}X) dy,$$ 
  where $$\hat{g}(y_1)=\int_0^{L_2}g(y_1,y_2)dy_2$$ and, for every  fixed $y_1  \in (0,L_1)$, $X \in L^2((0,L_1);H^1_\#(Y^*(y_1))$ is the unique solution of the following problem with $\int_{Y^*(y_1)} X dy_2dy_3=0$
  $$\int_{Y^*(y_1)}\nabla_{y_2y_3}X \nabla_{y_2y_3} \psi dy_2dy_3=\int_{Y^*(y_1)}\partial_{y_2}\psi dy_2dy_3,\quad \forall\psi \in H^1(Y^*(y_1)).$$

  	\end{theorem}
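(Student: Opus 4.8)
The plan is to proceed by the standard unfolding-homogenization scheme: obtain a priori bounds, apply the abstract compactness result (Theorem~\ref{Theorem_conv_sequences_Laplacian} in case iv, since here $0<\alpha<\beta=1$), unfold the variational formulation, and pass to the limit with carefully chosen oscillating test functions adapted to the two different scales. First I would test \eqref{variationalproblem} with $\varphi=u^\varepsilon$ to get $|||u^\varepsilon|||_{W^{1,2}(R^\varepsilon)}\le C$ uniformly in $\varepsilon$ (using $|||f^\varepsilon|||_{L^2(R^\varepsilon)}\le c$ and Cauchy-Schwarz). This makes Theorem~\ref{Theorem_conv_sequences_Laplacian}(iv) applicable: there are $u\in H^1(\omega)$, $u^1\in L^2(\omega;H^1_\#(Y^*))$ with $\partial_{y_2}u^1=\partial_{y_3}u^1=0$, and $u^3=u^2\in L^2(\omega\times(0,L_1);H^1_\#(Y^*(y_1)))$ realizing the four stated convergences (renaming $\varphi_1\rightsquigarrow u^1$, $\varphi_2=\varphi_3\rightsquigarrow u^2$). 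The weak convergence $\unf{\varepsilon}f^\varepsilon\rightharpoonup f$ is assumed, and $\unf{\varepsilon}u^\varepsilon\to u$ strongly in $L^2(\omega;H^1(Y^*))$; combined with property (c) and the u.c.i.\ (the bounded $|||\cdot|||$-norm guarantees $\tfrac1\varepsilon\int_{R_1^\varepsilon}\to 0$ for the relevant products), this lets me replace every $\tfrac1\varepsilon\int_{R^\varepsilon}$ by $\tfrac{1}{L_1L_2}\int_{\omega\times Y^*}$ of the unfolded integrand up to vanishing error.

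\textbf{Identifying the correctors.} The key step is to pin down $u^1$ and $u^2$ via two separate cell problems, chosen to match the two scales $\varepsilon^\alpha$ and $\varepsilon$. For $u^2$ I would unfold \eqref{variationalproblem} tested with $\varphi=\varepsilon\, w(x_1,x_2)\,\psi\!\left(\tfrac{x_1}{\varepsilon^\alpha},\tfrac{x_2}{\varepsilon},\tfrac{x_3}{\varepsilon}\right)$ where $\psi\in\mathcal D(\omega)\otimes C^\infty_{\#2}(Y^*(y_1))$ with $\psi$ depending on $y_2,y_3$ (so $\varepsilon^{-1}$ appears on $\partial_{x_2},\partial_{x_3}$ and the $\partial_{x_1}$-term, along with the $0$-th order and right-hand side, die as $\varepsilon\to 0$). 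The surviving identity, using $\unf\varepsilon\partial_{x_2}u^\varepsilon\rightharpoonup\partial_{x_2}u+\partial_{y_2}u^2$ and $\unf\varepsilon\partial_{x_3}u^\varepsilon\rightharpoonup\partial_{y_3}u^2$, is
\begin{equation*}
\int_{\omega\times Y^*}\bigl(\partial_{x_2}u+\partial_{y_2}u^2\bigr)\partial_{y_2}\psi+\partial_{y_3}u^2\,\partial_{y_3}\psi\;=0,
\end{equation*}
which by linearity gives $u^2=-X(x_1)(y_2,y_3)\,\partial_{x_2}u(x_1,x_2)$ with $X$ the asserted cell solution on $Y^*(y_1)$ (the $y_1$-dependence is a parameter inherited from the $\varepsilon^\alpha$-slow variable). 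For $u^1$, since $\partial_{y_2}u^1=\partial_{y_3}u^1=0$, it depends on $y_1$ alone; here I would test with $\varphi=\varepsilon^\alpha w(x_1,x_2)\,\theta\!\left(\tfrac{x_1}{\varepsilon^\alpha}\right)$, $\theta\in C^\infty_{\#1}$, so that only the $\partial_{x_1}$-term survives (it carries $\varepsilon^{-\alpha}$), giving $\int_{\omega\times Y^*}(\partial_{x_1}u+\partial_{y_1}u^1)\,\theta'(y_1)\,w=0$, i.e.\ an effective one-dimensional cell problem in $y_1$ whose weight is $|Y^*(y_1)|=\hat g(y_1)=\int_0^{L_2}g(y_1,y_2)\,dy_2$; solving it explicitly yields $\partial_{y_1}u^1=\bigl(-1+\tfrac{1}{\hat g(y_1)\langle 1/\hat g\rangle_{(0,L_1)}}\bigr)\partial_{x_1}u$.

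\textbf{The homogenized equation.} Finally I would take the test function $\varphi=w\in\mathcal D(\omega)$ (no oscillation), unfold, and pass to the limit: the $\partial_{x_1}$-term gives $\int_{\omega\times Y^*}(\partial_{x_1}u+\partial_{y_1}u^1)\,\partial_{x_1}w$, the $\partial_{x_2}$-term $\int_{\omega\times Y^*}(\partial_{x_2}u+\partial_{y_2}u^2)\,\partial_{x_2}w$, the $\partial_{x_3}$-term vanishes since $\partial_{x_3}w=0$, and the lower-order and data terms give $\int_{\omega\times Y^*}(u-f)\,w$. Substituting the expressions for $\partial_{y_1}u^1$ and $\partial_{y_2}u^2$ and integrating out $Y^*$ (resp.\ $Y^*(y_1)$) produces, after dividing by $|Y^*|=\tfrac{1}{L_1}\int_0^{L_1}\hat g$, the weak form of \eqref{resoweak} with $q_1=1/(\langle\hat g\rangle_{(0,L_1)}\langle 1/\hat g\rangle_{(0,L_1)})$, $q_2=\tfrac{1}{|Y^*|}\int_{Y^*}(1-\partial_{y_2}X)\,dy$, and $\bar f=\langle f\rangle_{Y^*(x_1)}$; uniqueness of $u$ follows from Lax-Milgram applied to \eqref{resoweak} (ellipticity of the $q_i$ is the standard consequence of Jensen/Cauchy-Schwarz on the cell problems), and since the limit is unique the whole sequence converges. \textbf{The main obstacle} I expect is the rigorous limit passage in the mixed-scale term $\unf\varepsilon\partial_{x_2}u^\varepsilon\,\unf\varepsilon(\text{oscillating test})$: one must check that the chosen test functions, after unfolding, converge strongly in the right space (using Proposition~\ref{Proposition_convergence_test_functions} and property (b)) while controlling the cross terms coming from $\partial_{x_1}$ acting on the $\tfrac{x_2}{\varepsilon}$-oscillation — these are exactly the terms where the scale separation $\alpha<1$ is used, and where Theorem~\ref{Theorem_conv_sequences_Laplacian}(iv)'s structural information (that $u^2$ and $u^3$ coincide and are $y_1$-measurable only as a parameter) must be invoked to close the argument cleanly.
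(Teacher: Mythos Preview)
Your proposal is correct and follows essentially the same route as the paper: uniform bounds, Theorem~\ref{Theorem_conv_sequences_Laplacian}(iv), then oscillating test functions at scales $\varepsilon$ and $\varepsilon^\alpha$ to identify $u^2$ (via the $Y^*(y_1)$-cell problem for $X$) and $u^1$ (via the one-dimensional $\hat g$-weighted problem), followed by macroscopic test functions $\varphi\in H^1(\omega)$ for the limit equation. One cosmetic point: the paper packages the $y_1$-dependence of the first oscillating test function into a separate factor $\phi_1\in\mathcal D(\omega\times(0,L_1))$ with $\psi=\psi(y_2,y_3)$ only, and the cross term you flag as the ``main obstacle'' is actually the $\varepsilon^{1-\alpha}\partial_{y_1}\phi_1\cdot\psi$ contribution (coming from $\partial_{x_1}$ hitting the $x_1/\varepsilon^\alpha$ factor, not $x_2/\varepsilon$), which vanishes trivially since $\alpha<1$.
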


	\begin{proof}
	
			
			Notice that the uniform bound for the solutions of \eqref{problem} are simple to obtain. Just take $\varphi=u^\varepsilon$ in \eqref{variationalproblem}, perform a H\"older's inequality in the right hand side and multiply the resulting inequality by $\varepsilon^{-1}$. This leads to the uniform bound of $|||u^\varepsilon|||_{H^1(R^\varepsilon)}$.
			
			By Theorem \ref{Theorem_conv_sequences_Laplacian}, there are $u\in H^1(\omega )$, $u^2\in L^2(\omega \times(0,L_2);H^1(Y^*(y_1)))$ and $u^1\in L^2(\omega ;H^1(Y^*))$ with $\partial_{y_1}u^1=\partial_{y_3} u^1=0$ such that
			\begin{equation}\label{limitsu1u2RW}
			\begin{gathered}
			\unf{\varepsilon}u^\varepsilon\to u\quad\textrm{strongly in}\quad L^2(\omega ;H^1(Y^*)),\\
		\unf{\varepsilon}\partial_{x_1}u^\varepsilon\rightharpoonup \partial_{x_1} u +\partial_{y_1}u^1\quad\textrm{weakly in}\quad L^2(\omega \times Y^*),\\
		\unf{\varepsilon}\partial_{x_2}u^\varepsilon\rightharpoonup\partial_{x_2}u+\partial_{y_2}u^2\quad\textrm{weakly in}\quad L^2(\omega \times Y^*),\\
        \unf{\varepsilon}\partial_{x_3}u^\varepsilon\rightharpoonup \partial_{y_3}u^2\quad\textrm{weakly in}\quad L^2(\omega \times Y^*).
			\end{gathered}
			\end{equation}
			
			By Proposition \ref{Proposition_unfolding_properties}, one writes \eqref{variationalproblem} as fixed domain problem as follows
			\begin{equation}\tag{PVU}\label{variational_problem_unfolded_RW}
			\begin{gathered}
			\int_{\omega \times Y^*}\mathcal{T}_\varepsilon(\nabla u^\varepsilon) \mathcal{T}_\varepsilon\nabla\varphi +\mathcal{T}_\varepsilon( u^\varepsilon)\varphi dx_1dx_2dy
			+\dfrac{L_1L_2}{\varepsilon}\int_{R_1^\varepsilon}\nabla u^\varepsilon\nabla\varphi+u^\varepsilon\varphi dx
			\\=\int_{\omega \times Y^*}\mathcal{T}_\varepsilon f^\varepsilon \mathcal{T}_\varepsilon\varphi dx_1dx_2dy
			+\dfrac{L_1L_2}{\varepsilon}\int_{R^\varepsilon_1}f^\varepsilon\varphi dx.
			\end{gathered}
			\end{equation}
			
			Taking $\varphi \in H^1(\omega )$ as test functions, one passes to the limit in the above equation using \eqref{limitsu1u2RW}. Then,
			\begin{equation}
			\begin{gathered}\label{prelimitRW}
			\int_{\omega \times Y^*}\left[(\partial_{x_1}u+\partial_{y_1}u^1,\partial_{x_2}u+\partial_{y_2}u^2)\nabla_{x_1x_2}\varphi +u\varphi\right] dx_1dx_2dy
			=\int_{\omega \times Y^*}f\varphi dx_1dx_2dy.
			\end{gathered}
			\end{equation}

	To identify the limit equation we proceed as follows. 		
			Let $\psi\in C^1_{\#2}([0,L_2]\times [0,g_1])$ (the space of functions that are periodic in $y_2$), $\phi_1\in \mathcal{D}(\omega \times(0,L_1))$, $\phi_2\in \mathcal{D}(\omega )$ and $\varPsi\in H^{1}(0,L_1)$. Define
			\begin{equation}
			\begin{gathered}\label{vi_def}
			V^\varepsilon_1(x_1,x_2,x_3)=\varepsilon\phi_1\left(x_1,x_2,\left\{\frac{x_1}{\varepsilon^\alpha} \right\}_{L_1}\right)\psi\left(\frac{x_2}{\varepsilon},\frac{x_3}{\varepsilon}\right)=\varepsilon\phi_1^{\varepsilon}(x_1,x_2)\psi^\varepsilon(x_2,x_3)\\[10pt]
			\textrm{and}\quad V^\varepsilon_2(x_1,x_2)=\varepsilon^\alpha\phi_2(x_1,x_2)\varPsi\left(\frac{x_1}{\varepsilon^\alpha}\right)=\varepsilon^\alpha\phi_2(x_1,x_2)\varPsi^\varepsilon(x_1).
			\end{gathered}
			\end{equation}
			Notice that 
			\begin{equation}
			\left\lbrace\begin{gathered}\label{Theorem_dif_v01}
			\partial_{x_1} V^\varepsilon_1=\varepsilon\partial_{x_1}\phi_1^{\varepsilon}\psi^\varepsilon+\varepsilon^{1-\alpha}\partial_{y_1}\phi_1^{\varepsilon}\psi^\varepsilon,\\
			\partial_{x_2} V^\varepsilon_1= \varepsilon\partial_{x_2}\phi_1^{\varepsilon}\psi^\varepsilon+\phi_1^{\varepsilon}\partial_{y_2}\psi^\varepsilon ,\\
			\partial_{x_3} V^\varepsilon_1=\phi_1\partial_{y_3}\psi^\varepsilon.
			\end{gathered}
			\right. \quad\textrm{and}\quad \left\lbrace
			\begin{gathered}
			\partial_{x_1}V^\varepsilon_2=\varepsilon^\alpha\partial_{x_1}\phi_2\varPsi_\varepsilon+ \phi_2\partial_{y_1}\varPsi^\varepsilon,\\
			\partial_{x_2}V^\varepsilon_2=\varepsilon^{\alpha}\partial_{x_2}\phi_2\varPsi^\varepsilon.
			\end{gathered}\right.
			\end{equation}
%
			Thus, we can conclude that
			\begin{equation}\label{convergence_test_fct_RW}
            \begin{gathered}
            \unf{\varepsilon}\partial_{y_1} V^\varepsilon_1 \to 0, \quad \unf{\varepsilon}\partial_{y_2} V^\varepsilon_1 \to \partial_{y_2}\psi \phi_1, \quad \unf{\varepsilon}\partial_{y_3} V^\varepsilon_1 \to \partial_{y_3}\psi \phi_1, \\
            \unf{\varepsilon}\partial_{y_1} V^\varepsilon_2 \to \partial_{y_2}\varPsi \phi_2, \quad \unf{\varepsilon}\partial_{y_2} V^\varepsilon_2 \to 0.
            \end{gathered}
            \end{equation}
			strongly in $L^2(\omega \times Y^*)$.

			Now, take $V^\varepsilon_1$ as a test function in \eqref{variational_problem_unfolded_RW} and pass to the limit and obtain
			\begin{equation}
			\int_{\omega \times Y^*}(\partial_{x_1}u+\partial_{y_1}u^1,\partial_{x_2}u+\partial_{y_2}u^2,\partial_{y_3} u^2)(0,\partial_{y_2}\psi,\partial_{y_3}\psi)\phi_1 dx_1dx_2dy=0,
			\end{equation}
			which is equivalent to
			\begin{equation}\label{quasi_auxiliary_problem_RW}
			\int_{Y^*(y_1)}\big(\partial_{x_2}u+\partial_{y_2}u^2\big)\partial_{y_2}\psi+ \partial_{y_3} u^2\partial_{y_3}\psi dy_2dy_3=0 \quad\textrm{a.e.}\quad (x_1,x_2,y_1)\in \omega \times(0,L_1).
			\end{equation}
			
			Notice that $C^1_{\#2}([0,L_2]\times [0,g_1])$ is dense in $H^1(Y^*(y_1))$. This means that we can rewrite the above equality for any $\psi \in H^1(Y^*(y_1))$ and for a.e. point $(x_1,x_2,y_1)\in \omega \times(0,L_1)$. Moreover, for a.e. point $(x_1,x_2,y_1)\in \omega \times(0,L_1)$ it has a unique solution in $H^1(Y^*(y_1))/\mathbb{R}$.
			
			Recall that the uniquely solvable, for each $y_1\in(0,L_1)$, auxiliary problem is
			\begin{equation}
			\begin{gathered}
			\int_{Y^*(y_1)}\nabla_{y_2y_3}X \nabla{y_2y_3} \psi dy_2dy_3=\int_{Y^*(y_1)}\partial_{y_2}\psi dy_2dy_3,\forall\psi \in H^1(Y^*(y_1))\\
			\textrm{with}\quad \int_{Y^*(y_1)} X dy_2dy_3=0.
			\end{gathered}
			\end{equation}
			Thus, comparing \eqref{quasi_auxiliary_problem_RW} with the auxiliary problem above, leads us to
			\begin{equation}\label{Theorem_expression_u1}
			\nabla_{y_2y_3}u^2=-\partial_{x_2}u\nabla_{y_2y_3} X\quad \textrm{for a.e.}\quad (x_1,x_2,y)\in \omega \times Y^*,
			\end{equation}
			that is $u^2$ and $\partial_{x_2}uX$ are equal up to constants.
			
			Next,  take $V^\varepsilon_2$ as a test function in \eqref{variational_problem_unfolded_RW}. Then,
			\begin{equation}
			\int_{\omega \times Y^*}(\partial_{x_1}u+\partial_{y_1}u^1)\partial_{y_1}\varPsi\phi_2 dy=0.
			\end{equation}
			Since the functions are independent of $y_2$ and $y_3$, we can rewrite the above equation as
			\begin{equation}
			\int_0^{L_1}(\partial_{x_1}u+\partial_{y_1}u^1)\partial_{y_1}\varPsi \left(\int_0^{L_2}g(y_1,y_2)dy_2\right)dy_1=0.
			\end{equation}
			Hence, treating $(x_1,x_2)$ as a parameters in the above equation we have that there exists a function $T$ depending on $(x_1,x_2)$ such that
			$$
			(\partial_{x_1}u(x_1,x_2)+\partial_{y_1}u^1(x_1,x_2,y_2)) \left(\int_0^{L_2}g(y_1,y_2)dy_2\right)=T(x_1,x_2)\quad\textrm{a.e.}\quad y_1\in (0,L_1).
			$$
			Using the fact that $\partial_{y_1}u^1$ is periodic, leads us to
			$$
			0=\dfrac{1}{L_1}\int_0^{L_1}\partial_{y_1}u^1 dy_1=\dfrac{1}{L_1}\int_0^{L_1}\dfrac{T}{\hat{g}(y_1)}-\partial_{x_1} u dy_1=\dfrac{T}{L_1}\int_0^{L_1}\dfrac{1}{\hat{g}(y_1)}dy_1-\partial_{x_1} u=T\langle 1/\hat{g}\rangle_{(0,L_1)}-\partial_{x_1}u,
			$$
			where 
			$$
			\hat{g}(y_1)=\int_0^{L_2}g(y_1,y_2)dy_2.
			$$
			Thus, the expression for $\partial_{y_1}u^1$ is
			\begin{equation}\label{Theorem_expression_u2}
			\partial_{y_1}u^1(x_1,x_2,y_1)=\left(\dfrac{1}{\langle 1/\hat{g}(y_1)\rangle_{(0,L_1)}\hat{g}(y_1)}-1\right)\partial_{x_1}u(x_1,x_2)\quad\textrm{a.e.}\quad(x_1,x_2,y_1)\in \omega \times(0,L_1).
			\end{equation}
			
			Finally, taking into account \eqref{Theorem_expression_u1} and \eqref{Theorem_expression_u2} in \eqref{prelimitRW}, leads us to 
            \begin{equation}
			\begin{gathered} 
			\int_{\omega \times Y^*}\dfrac{1}{\langle 1/\hat{g}(y_1)\rangle_{(0,L_1)}\hat{g}(y_1)}\partial_{x_1}u\partial_{x_1}\varphi+ (1-\partial_{y_2}X)\partial_{x_2}u\partial_{x_2}\varphi \,dx_1dx_2dy +u\varphi
			=\int_{\omega \times Y^*}f\varphi \,dx_1dx_2dy.
			\end{gathered}
			\end{equation}
			This concludes the proof, as we have established that $u$ satisfies the weak formulation of the problem shown in the theorem.

	\end{proof}

	\subsection{Weak oscillations}
	
	In this subsection, we suppose oscillations of order $\varepsilon^\alpha$ and $\varepsilon^\beta$ with $0<\alpha<\beta<1$. Then, the thin domain is
	\begin{equation}\label{TDS_W_W}
	R^\varepsilon=\left\{(x_1,x_2,x_3)\in\mathbb{R}^3:(x_1,x_2)\in \omega ,0<x_3<\varepsilon g\left(\frac{x_1}{\varepsilon^\alpha},\frac{x_2}{\varepsilon^\beta}\right) \right\}
	\end{equation}

	\begin{theorem}
		Let $u^\varepsilon$ be the solution of \eqref{problem} with $f^\varepsilon\in L^2(R^\varepsilon)$ such that $|||f^\varepsilon|||_{L^2(R^\varepsilon)}\leq c$ with $c$ a positive constant independent of $\varepsilon$. Suppose that there is $f\in L^2(\omega \times Y^*)$ such that
		\begin{equation}
		\unf{\varepsilon} f^\varepsilon\rightharpoonup f\quad\textrm{in}\quad L^2(\omega \times Y^*).
		\end{equation}
		Then, there exist $u\in H^1(\omega )$, $u^1\in L^2(\omega ;H^1_\#(Y^*))$, $u^2\in L^2(\omega \times (0,L_2);H^1_\#(Y^*(y_2)))$ with $\partial_{y_2}u^1=\partial_{y_3}u^1=\partial_{y_3}u^2=0$
		\begin{equation}
		\begin{gathered}
		\unf{\varepsilon}u^\varepsilon\to u\quad\textrm{strongly in}\quad L^2(\omega ;H^1(Y^*)),\\
		\unf{\varepsilon}\partial_{x_1}u^\varepsilon\rightharpoonup \partial_{x_1} u +\partial_{y_1}u^1\quad\textrm{weakly in}\quad L^2(\omega \times Y^*),\\
		\unf{\varepsilon}\partial_{x_2}u^\varepsilon\rightharpoonup\partial_{x_2}u+\partial_{y_2}u^2\quad\textrm{weakly in}\quad L^2(\omega \times Y^*),\\
        \unf{\varepsilon}\partial_{x_3}u^\varepsilon\rightharpoonup 0\quad\textrm{weakly in}\quad L^2(\omega \times Y^*).
		\end{gathered}
		\end{equation}
		The function $u$ is the weak solution of the following problem
		\begin{equation}
		\left\lbrace
		\begin{array}{l}
		-q_1u_{x_1x_1}-q_2u_{x_2x_2}+u=\bar{f}\quad\textrm{in}\quad \omega ,\\
		(q_1u_{x_1},q_2u_{x_2})\eta=0\quad\textrm{on}\quad\partial \omega ,
		\end{array}
		\right.
		\end{equation}
		where
		$$
		\begin{gathered}
        q_1=\dfrac{1}{\langle \bar g\rangle_{(0,L_1)}\langle 1/\bar{g}\rangle_{(0,L_1)}},\quad \bar{g}(y_1)=\int_0^{L_2}g(y_1,y_2)dy_2,\\
        q_2=\frac{1}{L_1}\int_0^{L_1}\dfrac{1}{\langle g\rangle_{(0,L_1)\times (0,L_2)}\langle 1/g(y_1,\cdot)\rangle_{(0,L_2)}} dy_1\quad\textrm{and}\quad
	\bar{f}=\langle f \rangle_{Y^*(x_1)}.
		\end{gathered}
		$$			
	\end{theorem}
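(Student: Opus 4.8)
The proof follows the same architecture as the previous theorem in this section (the resonant/weak case $0<\alpha<1$, $\beta=1$), so I would organize it around four blocks: a priori estimates, passage to the limit in the unfolded variational formulation, identification of the correctors $u^1$ and $u^2$ via cleverly chosen oscillating test functions, and assembly of the effective equation.

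\textbf{Step 1 (uniform bounds and extraction).} Taking $\varphi=u^\varepsilon$ in \eqref{variationalproblem}, applying H\"older on the right and multiplying by $\varepsilon^{-1}$ yields $|||u^\varepsilon|||_{H^1(R^\varepsilon)}\le c$. Hence Theorem \ref{Theorem_conv_sequences_Laplacian} applies in case (i), $\alpha<\beta<1$: there exist $u\in H^1(\omega)$, $u^1\in L^2(\omega;H^1_\#(Y^*))$ with $\partial_{y_2}u^1=\partial_{y_3}u^1=0$, and $u^2\in L^2(\omega\times(0,L_1);H^1_\#(Y^*(y_1)))$ with $\partial_{y_3}u^2=0$, along a subsequence, realizing the four convergences in \eqref{compa}. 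Since $\partial_{y_3}\varphi_3=0$ here and $\varphi_3$ appears only through $\partial_{y_3}\varphi_3$, the $x_3$-gradient unfolds to $0$, which is the fourth stated convergence.

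\textbf{Step 2 (limit in the unfolded problem).} Using Proposition \ref{Proposition_unfolding_properties} rewrite \eqref{variationalproblem} in the fixed domain $\omega\times Y^*$ exactly as in \eqref{variational_problem_unfolded_RW}, with the remainder terms over $R_1^\varepsilon$ controlled by the u.c.i. (the solutions and $f^\varepsilon$ have bounded rescaled norms). Testing with $\varphi\in H^1(\omega)$ and passing to the limit with the convergences of Step 1 gives the macroscopic identity
\begin{equation*}
\int_{\omega\times Y^*}\big[(\partial_{x_1}u+\partial_{y_1}u^1)\partial_{x_1}\varphi+(\partial_{x_2}u+\partial_{y_2}u^2)\partial_{x_2}\varphi+u\varphi\big]\,dx_1dx_2dy=\int_{\omega\times Y^*}f\varphi\,dx_1dx_2dy.
\end{equation*}

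\textbf{Step 3 (identification of the correctors).} Here is where the case $\alpha<\beta<1$ differs from $\beta=1$: now \emph{all} oscillations are subcritical, so both correctors are one-dimensional. For $u^1$ I would test \eqref{variational_problem_unfolded_RW} with $V^\varepsilon_2=\varepsilon^\alpha\phi_2(x_1,x_2)\varPsi(x_1/\varepsilon^\alpha)$, $\varPsi\in H^1_\#(0,L_1)$, $\phi_2\in\mathcal D(\omega)$, whose unfolded $y_1$-derivative converges to $\partial_{y_1}\varPsi\,\phi_2$ while the other unfolded derivatives vanish; integrating out $y_2,y_3$ (the integrand being $y_2,y_3$-independent, producing the factor $\bar g(y_1)=\int_0^{L_2}g(y_1,y_2)dy_2$) gives $\int_0^{L_1}(\partial_{x_1}u+\partial_{y_1}u^1)\,\partial_{y_1}\varPsi\,\bar g(y_1)\,dy_1=0$, hence $(\partial_{x_1}u+\partial_{y_1}u^1)\bar g(y_1)=T(x_1,x_2)$ constant in $y_1$; periodicity of $u^1$ fixes $T=\partial_{x_1}u/\langle1/\bar g\rangle_{(0,L_1)}$, so $\partial_{y_1}u^1=\big(\tfrac{1}{\bar g(y_1)\langle1/\bar g\rangle_{(0,L_1)}}-1\big)\partial_{x_1}u$, giving $q_1=1/(\langle\bar g\rangle_{(0,L_1)}\langle1/\bar g\rangle_{(0,L_1)})$ after integrating the first term of Step 2 in $y$. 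For $u^2$ I would test with $V^\varepsilon_1=\varepsilon^\beta\phi_1(x_1,x_2,\{x_2/\varepsilon^\beta\}_{L_2})\psi(x_2/\varepsilon^\beta,x_3/\varepsilon)$ — but note that since $\alpha<\beta$, the natural micro-variable for $u^2$ is $y_1$-dependent only through $g$, and the corrector cell is $Y^*(y_1)$ with a cell problem in the $(y_2,y_3)$ variables analogous to the $X$-problem but with $\partial_{y_3}u^2=0$ forced. Passing to the limit yields, for a.e.\ $(x_1,x_2,y_1)$, $\int_{Y^*(y_1)}(\partial_{x_2}u+\partial_{y_2}u^2)\partial_{y_2}\psi\,dy_2dy_3=0$ for all $\psi=\psi(y_2)$ periodic, which after integrating in $y_3$ (weight $g(y_1,y_2)$) becomes a one-dimensional problem solved by $\partial_{y_2}u^2=\big(\tfrac{1}{g(y_1,y_2)\langle1/g(y_1,\cdot)\rangle_{(0,L_2)}}-1\big)\partial_{x_2}u$; averaging gives the stated $q_2=\frac{1}{L_1}\int_0^{L_1}\frac{1}{\langle g\rangle_{(0,L_1)\times(0,L_2)}\langle1/g(y_1,\cdot)\rangle_{(0,L_2)}}\,dy_1$. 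Finally $\bar f=\langle f\rangle_{Y^*(x_1)}$ emerges from integrating the right side of Step 2 over $Y^*$, using $|Y^*|$ and $|Y^*(x_1)|$ bookkeeping.

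\textbf{Step 4 (assembly and uniqueness).} Substituting the corrector formulas into the identity of Step 2 decouples the $y$-integrals and leaves exactly the weak form of the announced Neumann problem for $u$; Lax--Milgram gives uniqueness of $u$, hence convergence of the whole sequence (not just a subsequence). The main obstacle I anticipate is the correct choice and convergence analysis of the oscillating test function for $u^2$ in the regime $\alpha<\beta<1$: because the two directions carry genuinely different subcritical scales, one must verify that $\unf{\varepsilon}$ applied to $V^\varepsilon_1$ and its derivatives behaves as claimed in \eqref{convergence_test_fct_RW}-type relations, and that the spurious cross-terms (e.g.\ those mixing $\varepsilon^{\beta-\alpha}$ factors) vanish — this is the analogue of the delicate step in the previous proof but must be re-checked since $\psi$ here cannot depend on $y_3$ nontrivially in the limit. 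Everything else is a routine adaptation of the $\beta=1$ argument.
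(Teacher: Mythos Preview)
Your overall architecture is correct and matches the paper's proof: uniform bounds, Theorem~\ref{Theorem_conv_sequences_Laplacian} case~(i), unfold the variational formulation, pass to the limit with $\varphi\in H^1(\omega)$, then identify $u^1$ and $u^2$ via oscillating test functions. Your treatment of $u^1$ is exactly the paper's.

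There is one concrete slip in Step~3, in the test function you propose for $u^2$. You wrote $V^\varepsilon_1=\varepsilon^\beta\phi_1\big(x_1,x_2,\{x_2/\varepsilon^\beta\}_{L_2}\big)\psi\big(x_2/\varepsilon^\beta,x_3/\varepsilon\big)$. Two problems: first, since $\beta<1$ the $x_3$-derivative is $\varepsilon^{\beta-1}\phi_1\partial_{y_3}\psi$, which blows up unless $\psi$ is $y_3$-independent (you note this yourself, but then there is no reason to carry the $x_3/\varepsilon$ argument at all). Second, and more importantly, your $\phi_1$ depends on $\{x_2/\varepsilon^\beta\}_{L_2}$, which after unfolding localizes in $y_2$, not in $y_1$. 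But $u^2\in L^2\big(\omega\times(0,L_1);H^1_\#(Y^*(y_1))\big)$ carries $y_1$ as a parameter, so to obtain the cell problem ``for a.e.\ $(x_1,x_2,y_1)$'' you must localize in $y_1$. The paper takes
\[
V^\varepsilon_2(x_1,x_2)=\varepsilon^\beta \phi_2\Big(x_1,x_2,\Big\{\tfrac{x_1}{\varepsilon^\alpha}\Big\}_{L_1}\Big)\varPsi\Big(\tfrac{x_2}{\varepsilon^\beta}\Big),\qquad \phi_2\in\mathcal D(\omega\times(0,L_1)),\ \varPsi\in H^1_\#(0,L_2),
\]
so that $\unf{\varepsilon}\partial_{x_2}V^\varepsilon_2\to\phi_2(x_1,x_2,y_1)\,\partial_{y_2}\varPsi$ and $\unf{\varepsilon}\partial_{x_1}V^\varepsilon_2\to 0$ (the dangerous term from differentiating the fractional part carries an $\varepsilon^{\beta-\alpha}$ factor, which vanishes since $\beta>\alpha$). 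This is what produces $\int_0^{L_2}(\partial_{x_2}u+\partial_{y_2}u^2)\,\partial_{y_2}\varPsi\,g(y_1,y_2)\,dy_2=0$ for a.e.\ $(x_1,x_2,y_1)$, which you then solve correctly. With this corrected test function the rest of your argument stands as written.
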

	\begin{proof}
		The uniform bounds follows as performed in the previous subsection. Therefore, by Theorem \ref{Theorem_conv_sequences_Laplacian}
		$\partial_{y_2}u^1=\partial_{y_3}u^1=\partial_{y_3}u^2=0$ such that
		\begin{equation}\label{convsol_WW}
		\begin{gathered}
		\unf{\varepsilon}\partial_{x_1}u^\varepsilon\rightharpoonup \partial_{x_1}u+\partial_{y_1}u^1\quad\textrm{weakly in}\quad L^2(\omega \times Y^*),\\
		\unf{\varepsilon}\partial_{x_2}u^\varepsilon\rightharpoonup \partial_{x_2}u+\partial_{y_2}u^2\quad\textrm{weakly in}\quad L^2(\omega \times Y^*),
		\end{gathered}
		\end{equation}
		Pass to the limit \eqref{variational_problem_unfolded_RW} for test functions depending only on $x_1$ and $x_2$. Then,
		\begin{equation}\label{pre_limit_WW}
		\int_{\omega \times Y^*}(\partial_{x_1}u+\partial_{y_1}u^1,\partial_{x_2}u+\partial_{y_2}u^2)\nabla_{x_1x_2}\varphi+u\varphi dx_1dx_2dy=\int_{\omega \times Y^*}\hat{f}\varphi dx_1dx_2dy,\quad\forall\varphi\in H^1(\omega ).
		\end{equation}
		
		Let $\psi\in H^1(0,L_1)$ and $\phi_1\in \mathcal{D}(\omega )$ and $\phi_2\in \mathcal{D}(\omega \times (0,L_1))$. Let $\varPsi\in H^1_\#(0,L_2)$, the functions that are periodic in $y_2$. Define the sequences
		\begin{equation}
		\begin{gathered}
		V^\varepsilon_1(x_1,x_2)=\varepsilon^\alpha \phi_1(x_1,x_2)\psi(x_1/\varepsilon^\alpha),\\
		V^\varepsilon_2(x_1,x_2)=\varepsilon^\beta \phi_2\left(x_1,x_2,\left\{\frac{x_1}{\varepsilon^\alpha} \right\}_{L_1}\right)\varPsi(x_2/\varepsilon^\beta).
		\end{gathered}
		\end{equation}
		The above sequences satisfy
		\begin{align}\label{convvi_WW}
		\unf{\varepsilon}V^\varepsilon_1\to 0, \quad \unf{\varepsilon}\partial_{x_1}V^\varepsilon_1\to\phi_1\partial_{y_1}\psi, \quad\unf{\varepsilon}\partial_{x_2}V^\varepsilon_1\to 0\\
		\unf{\varepsilon}V^\varepsilon_2\to 0, \quad \unf{\varepsilon}\partial_{x_1}V^\varepsilon_2\to 0, \quad \unf{\varepsilon}\partial_{x_2}V^\varepsilon_2\to \phi_2\partial_{y_2}\psi,
        \end{align}
		strongly in $L^2(\omega \times Y^*)$.
		
		Now, take $V^\varepsilon_1$ as a test function in \eqref{variational_problem_unfolded_RW} and pass to the limit. Therefore,
		\begin{equation}
		\int_{\omega \times Y^*}(\partial_{x_1}u+\partial_{y_1}u^1)\phi_1\partial_{y_1}\psi dx_1dx_2dy=0,
		\end{equation}
		which is equivalent to
		\begin{equation}
		\int_0^{L_1}(\partial_{x_1}u+\partial_{y_1}u^1)\partial_{y_1}\psi\left(\int_0^{L_2}g(y_1,y_2)dy_2\right) dy_1=0.
		\end{equation}
		Treating $(x_1,x_2)$ as parameters, we get, in analogy to the previous subsection that 
		\begin{equation}\label{Theorem_expression_u1_WW}
		\partial_{y_1}u^1(x_1,x_2,y_1)=\left(\frac{1}{\langle 1/\bar{g}\rangle_{(0,L_1)}\bar{g}(y_1)}-1\right)\partial_{x_1}u(x_1,x_2)\quad\textrm{a.e.}\quad (x_1,x_2,y_1)\in \omega \times (0,L_1),
		\end{equation}
		where
		$$
		\bar{g}(y_1)=\int_0^{L_2}g(y_1,y_2)dy_2.
		$$
		
		In order to identify $u^2$, take as a test function in \eqref{variational_problem_unfolded_RW} $V^\varepsilon_2$. Then,
		\begin{equation}
		\int_{\omega \times Y^*}(\partial_{x_2}u+\partial_{y_2}u^2)\phi_2\partial_{y_2}\varPsi dx_1dx_2dy=0,
		\end{equation}
		this is equivalent to
		\begin{equation}
		\int_0^{L_2}(\partial_{x_2}u+\partial_{y_2}u^2)\partial_{y_2}\varPsi g(y_1,y_2) dy_2=0.
		\end{equation}
		
		Treating $(x_1,x_2,y_1)$ as parameters, we get 
		$$
		(\partial_{x_2}u+\partial_{y_2}u^2)g(y_1,y_2)=T(x_1,x_2,y_1) \quad\textrm{a.e.}\quad y_2\in (0,L_2).
		$$
		Since $\partial_{y_2}u^2$ is periodic, we get
		\begin{equation}
		0=\frac{1}{L_2}\int_0^{L_2}\partial_{y_2}u^2 dy_2=\frac{1}{L_2}\int_0^{L_2}\frac{T(x_1,x_2,y_1)}{g(y_1,y_2)}-\partial_{x_2} u dy_2=T(x_1,x_2,y_1)\langle 1/g(y_1,\cdot)\rangle_{(0,L_2)}-\partial_{x_2}u.
		\end{equation}
		Put together the two above equalities and obtain
		\begin{equation}\label{Theorem_expression_u2_WW}
		\partial_{y_2}u^2(x_1,x_2,y_1,y_2)=\left(\frac{1}{\langle 1/g(y_1,\cdot)\rangle_{(0,L_2)}g(y_1,y_2)}-1\right)\partial_{x_2}u(x_1,x_2)
		\end{equation}
		a.e. $(x_1,x_2,y_1,y_2)\in \omega \times (0,L_1)\times(0,L_2)$.
		
		Joining together \eqref{pre_limit_WW}, \eqref{Theorem_expression_u1_WW} and \eqref{Theorem_expression_u2_WW} we have
		\begin{equation}
		\int_{\omega \times Y^*}\left(\frac{\partial_{x_1}u}{\langle 1/\bar{g}(y_1)\rangle_{(0,L_1)}\bar{g}(y_1)},\frac{\partial_{x_2}u}{\langle 1/g(y_1,\cdot)\rangle_{(0,L_2)}g(y_1,y_2)}\right)\nabla_{x_1x_2}\varphi+u\varphi dx_1dx_2dy=\int_{\omega \times Y^*}f\varphi dx_1dx_2dy,
		\end{equation}
		which is equivalent to the weak formulation of the limit problem introduced in the theorem.	
	\end{proof}

	\section{Strong oscillations $\beta>1$}
In this section, we examine the behavior of solutions to the Neumann problem when at least one direction of oscillation exhibits highly oscillatory behavior
First, we fix some notations. We divide the thin domain $R^\eps$ into two parts: the oscillating and the non-oscillating.
	$$
	R^\varepsilon_-=\omega\times (0,\varepsilon g_0),\;
	R^\varepsilon_+=R^\varepsilon\backslash \overline{R^\varepsilon_+},\;
	R_-=\omega\times (0,g_0).
	$$

 Now we define a rescaling operator and we recall, without proofs, their basic results.
	
	\begin{definition}
		Let $\varphi$ be a measurable function in $R^\varepsilon_-=\omega\times (0,\varepsilon g_0)$. We define $\Pi_\varepsilon:\mathcal{M}(R^\varepsilon_-)\to \mathcal{M}(R_-)$ as follows
		\begin{equation}
		\Pi_\varepsilon(x_1,x_2,x_3)=\varphi(x_1,x_2,\varepsilon x_3),\quad\forall (x_1,x_2,x_3)\in R_-.
		\end{equation}
	\end{definition}
	
	\begin{proposition}\label{Proprescaling}
		\begin{enumerate}
			\item[(i)] Let $\varphi\in L^1(R^\varepsilon_-)$. Then,
			\begin{equation}
			\int_{R_-}\Pi_\varepsilon\varphi  dx=\frac{1}{\varepsilon}\int_{R^\varepsilon_-}\varphi dx.
			\end{equation}
			\item[(ii)] If $\varphi\in L^p(R^\varepsilon_-)$, $1\leq p\leq\infty$, then
			\begin{equation}
			\|\Pi_\varepsilon\varphi\|_{L^p(R_-)}=|||\varphi|||_{L^p(R^\varepsilon_-)}.
			\end{equation}
			\item[(iii)] For $\varphi\in W^{1,p}(R^\varepsilon_-)$, $1\leq p\leq\infty$, 
			\begin{equation}
			\partial_{x_1}\Pi_\varepsilon\varphi =\Pi_\varepsilon\partial_{x_1}\varphi,\quad \partial_{x_2}\Pi_\varepsilon\varphi =\Pi_\varepsilon\partial_{x_2}\varphi\quad\textrm{and}\quad \partial_{x_3}\Pi_\varepsilon\varphi =\varepsilon\Pi_\varepsilon\partial_{x_3}\varphi.
			\end{equation}
			\item[(iv)] Let $\varphi\in L^p(\omega)$, $1\leq p\leq\infty$, then $\Pi_\varepsilon\varphi=\varphi$.
		\end{enumerate}
	\end{proposition}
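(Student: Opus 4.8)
The plan is to regard $\Pi_\varepsilon$ as the anisotropic dilation $(x_1,x_2,x_3)\mapsto(x_1,x_2,\varepsilon x_3)$, which carries $R_-=\omega\times(0,g_0)$ onto $R^\varepsilon_-=\omega\times(0,\varepsilon g_0)$, so that every assertion reduces to the one-dimensional change of variables $s=\varepsilon x_3$ in the last coordinate, in exactly the spirit of the proof of Proposition~\ref{Proposition_unfolding_properties}(c). I would set up this change of variables once and then harvest all four items from it.

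For (i) I would use Fubini in $x_3$ together with the substitution $s=\varepsilon x_3$, so that $dx_3=\varepsilon^{-1}\,ds$ and the interval $(0,g_0)$ becomes $(0,\varepsilon g_0)$, giving
\[
\int_{R_-}\Pi_\varepsilon\varphi\,dx=\int_{\omega}\int_0^{g_0}\varphi(x_1,x_2,\varepsilon x_3)\,dx_3\,dx_1dx_2=\frac{1}{\varepsilon}\int_{\omega}\int_0^{\varepsilon g_0}\varphi(x_1,x_2,s)\,ds\,dx_1dx_2=\frac{1}{\varepsilon}\int_{R^\varepsilon_-}\varphi\,dx.
\]
For (ii) I would apply (i) to $|\varphi|^p\in L^1(R^\varepsilon_-)$, using the pointwise identity $|\Pi_\varepsilon\varphi|^p=\Pi_\varepsilon(|\varphi|^p)$, take $p$-th roots, and recall the convention $|||\,\cdot\,|||_{L^p(R^\varepsilon_-)}=\varepsilon^{-1/p}\|\,\cdot\,\|_{L^p(R^\varepsilon_-)}$; the case $p=\infty$ is immediate since $\Pi_\varepsilon$ is a.e.\ bijective and preserves essential suprema. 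Item (iv) is trivial: if $\varphi$ depends only on $(x_1,x_2)$ then $\Pi_\varepsilon\varphi(x_1,x_2,x_3)=\varphi(x_1,x_2)=\varphi$.

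The only point that needs a word of care is (iii). For $\varphi\in C^1(\overline{R^\varepsilon_-})$ these are the ordinary chain rule: differentiating $\varphi(x_1,x_2,\varepsilon x_3)$ in $x_1$ or $x_2$ leaves the derivative untouched, while differentiating in $x_3$ produces the factor $\varepsilon$ coming from $\partial_{x_3}(\varepsilon x_3)$. I would then extend to $W^{1,p}(R^\varepsilon_-)$ by density, noting that by (ii) $\Pi_\varepsilon$ acts as a bounded isomorphism between the relevant Lebesgue (hence Sobolev) spaces, so both sides of each identity pass to the limit; alternatively one verifies the weak formulation directly by testing against $\psi\in C_c^\infty(R_-)$ and changing variables $s=\varepsilon x_3$ in both the $\Pi_\varepsilon\varphi$ term and in $\psi$. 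There is no genuine obstacle here — the proof is a few lines — the only modest bookkeeping being the $\varepsilon^{-1/p}$ factor in the triple-norm and the density step for weak derivatives; I would refer back to Proposition~\ref{Proposition_unfolding_properties} for the pattern of argument.
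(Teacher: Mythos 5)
Your proof is correct. Note that the paper states Proposition~\ref{Proprescaling} explicitly ``without proofs,'' so there is no in-text argument to compare against; your verification via the anisotropic change of variables $s=\varepsilon x_3$ is the standard one, matches the pattern of the proof given for Proposition~\ref{Proposition_unfolding_properties}(c), and all four items (including the density step for (iii) and the $p=\infty$ remark in (ii)) are handled correctly.
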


We also split the basic cell $Y^*$ in two parts. Then, we define 
\begin{align*}
	&Y^*_+=\left\{(y_1,y_2,y_3)\in\mathbb{R}^3:0<y_1<L_1,\,0<y_2<L_2,\,g_0<y_3<g(y_1,y_2) \right\}\\
    &Y_0^*=(0,L_1)\times (0,L_2)\times (0,g_0]
\end{align*}

 Notice that $Y^*_+$ is the part of the basic cell $Y^*$ that oscillates due to the upper boundary. Moreover,   $Y^*=Y_+^*\cup Y_0^*$.

        Since we have two different scales of oscillation for the variables \(x\) or \(y\), the following notation will be used:
      
	$$
	Y^*_+(y_1)=\left\{(y_2,y_3)\in\mathbb{R}^2:0<y_2<L_2,\,g_0<y_3<g(y_1,y_2) \right\}, \quad y_1 \in [0, L_1].
	$$
 	$$
	Y^*_+(y_2)=\left\{(y_1,y_3)\in\mathbb{R}^2:0<y_1<L_1,\,g_0<y_3<g(y_1,y_2) \right\}, \quad y_2 \in [0, L_2].
	$$	
Throughout this section we denote by  $\mathcal{T^+_\eps}$ the unfolding operator associated to the cell $Y^*_+$.
Moreover, in order to simplify the notation, we denote the restriction of the solution of \eqref{problem} to $R^\eps_+$ and $R^\eps_-$ by $u^\varepsilon_+$ and $u^\varepsilon_-$ respectively.

Now, we prove the key compactness result that allows us to transition to the limit in the case where strong oscillations exist.
\begin{theorem}\label{strong0}
		Let $u^\varepsilon$ be the solution of \eqref{problem} with $f^\varepsilon\in L^2(R^\varepsilon)$ such that $|||f^\varepsilon|||_{L^2(R^\varepsilon)}\leq c$ with $c$ a positive constant independent of $\varepsilon$. Then, there exists $u_2 \in L^2(\omega\times Y^*_+)$ such that $$\unf{\varepsilon}^+(\partial_{x_2}u^\varepsilon_+)\rightharpoonup u_2\quad\hbox{weakly in}\quad L^2(\omega \times Y^*_+).$$ 
  Moreover, $u_2$ satisfies
  \begin{equation}
		\tilde{u}_2=0\quad\textrm{a.e. in}\quad \omega \times (0,L_1)\times (0,L_2)\times (g_0,g_1).
		\end{equation}
  \end{theorem}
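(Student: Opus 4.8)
The plan is to obtain $u_2$ by a soft compactness argument and then to identify its zero–extension as the trivial function by testing \eqref{variationalproblem} with functions that oscillate at the fastest scale in the $x_2$–direction; the regime $\beta>1$, $0\le\alpha<\beta$ is precisely what makes all the parasite terms disappear.

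\textbf{Compactness.} Taking $\varphi=u^\varepsilon$ in \eqref{variationalproblem}, applying H\"older's inequality on the right–hand side and dividing by $\varepsilon$ gives $|||u^\varepsilon|||_{H^1(R^\varepsilon)}\le c$. The operator $\mathcal{T}^{+}_{\varepsilon}$ enjoys the estimates of Proposition~\ref{Proposition_unfolding_properties} with $Y^*$ replaced by $Y^*_+$, so part (d) yields $\|\mathcal{T}^{+}_{\varepsilon}(\partial_{x_2}u^\varepsilon_+)\|_{L^2(\omega\times Y^*_+)}\le (L_1L_2)^{1/2}\,|||\partial_{x_2}u^\varepsilon|||_{L^2(R^\varepsilon)}\le C$. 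Hence, along a subsequence, $\mathcal{T}^{+}_{\varepsilon}(\partial_{x_2}u^\varepsilon_+)\rightharpoonup u_2$ weakly in $L^2(\omega\times Y^*_+)$ for some $u_2$, which proves the first assertion; let $\tilde u_2$ denote its extension by zero to $\omega\times(0,L_1)\times(0,L_2)\times(g_0,g_1)$.

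\textbf{Oscillating test functions.} Given $\phi\in\mathcal{D}(\omega)$ and $\Theta\in C^1(\mathbb{R}^3)$ that is $L$–periodic in $(y_1,y_2)$ and vanishes for $y_3\le g_0$, I would use
\[
\varphi^\varepsilon(x_1,x_2,x_3)=\varepsilon^\beta\,\phi(x_1,x_2)\,\Theta\!\Big(\tfrac{x_1}{\varepsilon^\alpha},\tfrac{x_2}{\varepsilon^\beta},\tfrac{x_3}{\varepsilon}\Big)
\]
as test function in \eqref{variationalproblem}. It lies in $H^1(R^\varepsilon)$ and vanishes on $R^\varepsilon_-$, so every integral is actually over $R^\varepsilon_+$. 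Differentiating, $\varphi^\varepsilon=O(\varepsilon^\beta)$, $\partial_{x_1}\varphi^\varepsilon=O(\varepsilon^{\beta-\alpha})$, $\partial_{x_3}\varphi^\varepsilon=O(\varepsilon^{\beta-1})$ pointwise, while $\partial_{x_2}\varphi^\varepsilon=\phi\,(\partial_{y_2}\Theta)^\varepsilon+O(\varepsilon^\beta)$, where $(\partial_{y_2}\Theta)^\varepsilon=(\partial_{y_2}\Theta)(x_1/\varepsilon^\alpha,x_2/\varepsilon^\beta,x_3/\varepsilon)$. Dividing \eqref{variationalproblem} by $\varepsilon$ and letting $\varepsilon\to0$: since $\beta>1$ and $\alpha<\beta$, the uniform bounds on $|||\nabla u^\varepsilon|||_{L^2(R^\varepsilon)}$, $|||u^\varepsilon|||_{L^2(R^\varepsilon)}$ and $|||f^\varepsilon|||_{L^2(R^\varepsilon)}$ force the contributions of $\partial_{x_1}\varphi^\varepsilon$, $\partial_{x_3}\varphi^\varepsilon$, $\varphi^\varepsilon$ and $f^\varepsilon\varphi^\varepsilon$ to vanish; the only surviving term, $\tfrac1\varepsilon\int_{R^\varepsilon_+}\partial_{x_2}u^\varepsilon\,\partial_{x_2}\varphi^\varepsilon$, equals, up to a u.c.i.\ error, $\tfrac1{L_1L_2}\int_{\omega\times Y^*_+}\mathcal{T}^{+}_{\varepsilon}(\partial_{x_2}u^\varepsilon_+)\,\mathcal{T}^{+}_{\varepsilon}\!\big(\phi\,(\partial_{y_2}\Theta)^\varepsilon\big)$, and using $\mathcal{T}^{+}_{\varepsilon}(\partial_{x_2}u^\varepsilon_+)\rightharpoonup u_2$ together with $\mathcal{T}^{+}_{\varepsilon}(\phi\,(\partial_{y_2}\Theta)^\varepsilon)=\mathcal{T}^{+}_{\varepsilon}(\phi)\,\partial_{y_2}\Theta\to\phi\,\partial_{y_2}\Theta$ strongly (Proposition~\ref{Proposition_unfolding_properties}(a)–(b) and Proposition~\ref{Proposition_convergence_test_functions}) it converges to $\tfrac1{L_1L_2}\int_{\omega\times Y^*_+}u_2\,\phi\,\partial_{y_2}\Theta$. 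Thus
\[
\int_{\omega\times Y^*_+}u_2(x_1,x_2,y)\,\phi(x_1,x_2)\,\partial_{y_2}\Theta(y)\;dx_1dx_2dy=0\qquad\text{for all admissible }\phi,\Theta .
\]

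\textbf{Conclusion and main obstacle.} Reading this identity on the box $\omega\times(0,L_1)\times(0,L_2)\times(g_0,g_1)$ for $\tilde u_2$, and letting $\partial_{y_2}\Theta$ range over the $L_2$–periodic functions with zero $y_2$–mean, one gets that $\tilde u_2$ is independent of $y_2$ for a.e.\ $(x_1,x_2,y_1,y_3)$; since $\tilde u_2$ vanishes wherever $y_3\ge g(y_1,y_2)$, and for a.e.\ such $(y_1,y_3)$ there is a $y_2$ with $g(y_1,y_2)\le y_3$, the common value must be $0$, i.e.\ $\tilde u_2=0$ a.e.; uniqueness of the limit then upgrades the convergence to the whole sequence. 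The effort is concentrated in two places. First, in the test–function step one must verify carefully that $\varphi^\varepsilon$ is a genuine $H^1(R^\varepsilon)$ function supported away from $R^\varepsilon_-$ and that each discarded term is really $o(1)$ in the rescaled norms: this is exactly where $\beta>1$ (the altitude scale beating the $x_2$–oscillation scale, controlling $\partial_{x_3}\varphi^\varepsilon$ and $\varphi^\varepsilon$) and $\alpha<\beta$ (controlling $\partial_{x_1}\varphi^\varepsilon$) are used. Second, in the concluding step the treatment of the fibres on which $Y^*_+(y_1,y_3)$ exhausts $(0,L_2)$ is delicate, because there the periodic correctors above do not separate points; this is closed by a density/approximation argument, together with the analysis of $u^\varepsilon$ on the non–oscillating part $R^\varepsilon_-$ via the rescaling operator $\Pi_\varepsilon$ of Proposition~\ref{Proprescaling}.
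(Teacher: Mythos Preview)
Your strategy coincides with the paper's: test the variational formulation with an $\varepsilon^\beta$-scaled function oscillating at the fastest scale in $x_2$ and supported in the oscillating strip $R^\varepsilon_+$, then use $\beta>1$ and $\alpha<\beta$ to kill every term except the one carrying $\partial_{x_2}u^\varepsilon_+$. The paper's concrete choice is
\[
w^\varepsilon(x)=\varepsilon^\beta\,\tilde\phi_1\!\Big(x_1,x_2,\Big\{\tfrac{x_1}{\varepsilon^\alpha}\Big\}_{L_1},\tfrac{x_3}{\varepsilon}\Big)\,\psi\!\Big(\Big\{\tfrac{x_2}{\varepsilon^\beta}\Big\}_{L_2}\Big),
\qquad \phi_1\in\mathcal D\big(\omega\times(0,L_1)\times(g_0,g_1)\big),\ \psi\in\mathcal D(0,L_2),
\]
which is your $\phi\,\Theta$ with separated variables; after passing to the limit both arguments land on
\(
\int_0^{L_2}\tilde u_2(x_1,x_2,y_1,y_2,y_3)\,\rho(y_2)\,dy_2=0
\)
a.e., with $\rho=\psi'$ (resp.\ $\rho=\partial_{y_2}\Theta$), and then read off $\tilde u_2=0$.

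There is, however, a genuine gap in your concluding step. Because any admissible $\psi$ (or $\Theta(\cdot,y_2,\cdot)$) must be $L_2$-periodic for the test function to lie in $H^1(R^\varepsilon)$, the attainable $\rho$'s all have zero $y_2$-mean, so the identity only yields that $\tilde u_2$ is \emph{constant} in $y_2$; you then try to kill the constant via ``for a.e.\ $(y_1,y_3)$ there is a $y_2$ with $g(y_1,y_2)\le y_3$''. This claim is not implied by $(\mathbf{H_g})$: if $g(y_1,y_2)=2+\cos y_1$ (so $g_0=1$, $g_1=3$), then for every $y_1$ in a neighbourhood of $\pi/2$ one has $g(y_1,\cdot)\equiv 2$, and for $y_3\in(1,2)$ no such $y_2$ exists --- a set of positive measure on which your argument does not fire. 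Your fallback to ``density/approximation and $\Pi_\varepsilon$'' is too vague to repair this. The paper does not take your fibre route; it simply asserts $\tilde u_2=0$ ``due to the choice of $\rho$'', treating $\rho$ as arbitrary in $\mathcal D(0,L_2)$ --- a step that, read literally with $\psi\in\mathcal D(0,L_2)$, shares the same subtlety but is not discussed there either.
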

  \begin{proof}
Rewriting the variational formulation of \eqref{problem} we have:
		\begin{equation}
		\begin{gathered}
		\int_{R^\varepsilon}\left(\partial_{x_1} u^\varepsilon\partial_{x_1}\varphi +\partial_{x_3} u^\varepsilon\partial_{x_3}\varphi +u^\varepsilon\varphi\right) dx\\+\int_{R^\varepsilon_+}\partial_{x_2} u^\varepsilon_+\partial_{x_2}\varphi dx +\int_{R^\varepsilon_-}\partial_{x_2} u^\varepsilon_-\partial_{x_2}\varphi dx =\int_{R^\varepsilon} f^\varepsilon \varphi dx.
		\end{gathered}
		\end{equation}
		Next, by Propostions \ref{Proposition_unfolding_properties} and \ref{Proprescaling} we get that 
		\begin{equation}
		\begin{gathered}\label{variational_unfolded_RS}
		\frac{1}{L_1L_2}\int_{\omega \times Y^*}\left(\unf{\varepsilon}\partial_{x_1} u^\varepsilon\unf{\varepsilon}\partial_{x_1}\varphi +\unf{\varepsilon}\partial_{x_3} u^\varepsilon\unf{\varepsilon}\partial_{x_3}\varphi +\unf{\varepsilon}u^\varepsilon\unf{\varepsilon}\varphi\right) dx_1dx_2dy\\
		+\frac{1}{\varepsilon}\int_{R^\varepsilon_1}\left(\partial_{x_1} u^\varepsilon\partial_{x_1}\varphi +\partial_{x_3} u^\varepsilon\partial_{x_3}\varphi +u^\varepsilon\varphi\right) dx\\
		\frac{1}{L_1L_2}\int_{\omega \times Y^*_+}\unf{\varepsilon}^+\partial_{x_2} u^\varepsilon_+\unf{\varepsilon}^+\partial_{x_2}\varphi dx_1dx_2dy+\frac{1}{\varepsilon} \int_{R^\varepsilon_{+1}}\partial_{x_2} u^\varepsilon_+\partial_{x_2}\varphi dx\\
		+\int_{R_-}\Pi_\varepsilon\partial_{x_2} u^\varepsilon_-\Pi_\varepsilon\partial_{x_2}\varphi dx=\frac{1}{L_1L_2}\int_{\omega \times Y^*}\unf{\varepsilon}f^\varepsilon\unf{\varepsilon}\varphi dx_1dx_2dy+\frac{1}{\varepsilon}\int_{R^\varepsilon_1} f^\varepsilon \varphi dx.
		\end{gathered}
		\end{equation}

        Now, we consider the following function
		\begin{equation}
			w^\varepsilon(x_1,x_2,x_3)=\varepsilon^\beta \tilde{\phi}_1\left(x_1,x_2,\left\{\frac{x_1}{\varepsilon^\alpha} \right\}_{L_1},\frac{x_3}{\varepsilon}\right)\psi\left(\left\{\frac{x_2}{\varepsilon^\beta} \right\}_{L_2}\right),
		\end{equation}
		where $\phi_1\in \mathcal{D}(\omega \times(0,L_1)\times (g_0,g_1))$, $\rho\in \mathcal{D}(0,L_2)$ and $\psi \in \mathcal{D}(0,L_2)$ is choosed satisfying $\psi'=\rho$. Taking $w^\varepsilon$ as a test function in \eqref{variational_unfolded_RS}. Then
		\begin{equation}
		\int_{\omega \times Y^*_+}u_2\phi_1(x_1,x_2,y_1,y_3)\psi'(y_2) dx_1dx_2dy=0
		\end{equation} 
		which implies
		$$
		\int_0^{L_2}\tilde{u}_2(x_1,x_2,y_1,y_2,y_3)\rho(y_2) dy_2=0\quad \textrm{a.e. in}\quad \omega\times (0,L_1)\times (g_0,g_1) 
		$$
		and this means, due to the choice of $\rho$, that 
		\begin{equation}
		\tilde{u}_2=0\quad\textrm{a.e. in}\quad \omega \times (0,L_1)\times (0,L_2)\times (g_0,g_1).
		\end{equation}
		
  \end{proof}

	\subsection{Resonant and strong oscillations}
	
	In this subsection, we suppose oscillations of order $\varepsilon$, that is $\alpha=1$, and $\varepsilon^\beta$ with $0<1<\beta$.
    \begin{equation}\label{TDS_R_S}
	R^\varepsilon=\left\{(x_1,x_2,x_3)\in\mathbb{R}^3:(x_1,x_2)\in \omega ,0<x_3<\varepsilon g\left(\frac{x_1}{\varepsilon},\frac{x_2}{\varepsilon^\beta}\right) \right\}
	\end{equation}

    First, we present a compactness result in which only the direction of oscillation that matches the oscillation frequency of the domain's height is considered.
	\begin{theorem}\label{Theorem_conv_sequences_Laplacian_resonant_strong}
		Let $\varphi^\varepsilon\in W^{1,p}(R^\varepsilon)$ with $|||\varphi^\varepsilon|||_{W^{1,p}(R^\varepsilon)}$ uniformly bounded. Then, there is $\varphi^1\in L^p\left(\omega ;W^{1,p}_\#(Y^*)\right)$ with 
		$\partial_{y_2}\varphi_1=0$ such that
		\begin{equation}\label{cv03}
		\begin{gathered}
		\unf{\varepsilon}\varphi^\varepsilon\to \varphi\quad\textrm{strongly in}\quad L^p\left(\omega ;W^{1,p}_\#(Y^*)\right),\\
		\unf{\varepsilon}\partial_{x_1}\varphi^\varepsilon\rightharpoonup \partial_{x_1}\varphi+\partial_{y_1}\varphi^1\quad\textrm{weakly in}\quad L^p(\omega \times Y^*),\\
		\unf{\varepsilon}\partial_{x_3}\varphi^\varepsilon\rightharpoonup \partial_{y_3}\varphi^1\quad\textrm{weakly in}\quad L^p(\omega \times Y^*).
		\end{gathered}
		\end{equation}
	\end{theorem}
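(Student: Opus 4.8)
The plan is to replicate the scheme of Theorem \ref{Theorem_conv_sequences_Laplacian}, but with a \emph{single} auxiliary operator rescaled by the thickness order $\varepsilon$, since here $\alpha=1$ coincides with the height order while the $x_2$-oscillations (of order $\varepsilon^\beta$ with $\beta>1$) are too fast to generate a corrector at this compactness level; that direction is precisely the one recovered separately, through $\mathcal{T}^+_\varepsilon$, in Theorem \ref{strong0}. First, the strong convergence $\unf{\varepsilon}\varphi^\varepsilon\to\varphi$ with $\varphi\in W^{1,p}(\omega)$ is exactly Proposition \ref{propositionconvergence}; the limit being independent of $y$, it is trivially $\#$-periodic, which accounts for the first line of \eqref{cv03}.

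Next, define
$$Z_\varepsilon(x_1,x_2,y)=\frac{1}{\varepsilon}\Big(\unf{\varepsilon}\varphi^\varepsilon(x_1,x_2,y)-\frac{1}{|Y^*|}\int_{Y^*}\unf{\varepsilon}\varphi^\varepsilon(x_1,x_2,y)\,dy\Big),$$
which has zero mean over $Y^*$. By property (e) of Proposition \ref{Proposition_unfolding_properties} and $\alpha=1$ one gets
$$\partial_{y_1}Z_\varepsilon=\unf{\varepsilon}\partial_{x_1}\varphi^\varepsilon,\qquad \partial_{y_2}Z_\varepsilon=\varepsilon^{\beta-1}\unf{\varepsilon}\partial_{x_2}\varphi^\varepsilon,\qquad \partial_{y_3}Z_\varepsilon=\unf{\varepsilon}\partial_{x_3}\varphi^\varepsilon.$$
By properties (d) and (f) applied to $\partial_{x_i}\varphi^\varepsilon$ and the uniform bound on $|||\varphi^\varepsilon|||_{W^{1,p}(R^\varepsilon)}$, the three quantities $\unf{\varepsilon}\partial_{x_1}\varphi^\varepsilon$, $\unf{\varepsilon}\partial_{x_2}\varphi^\varepsilon$, $\unf{\varepsilon}\partial_{x_3}\varphi^\varepsilon$ are bounded in $L^p(\omega\times Y^*)$; since $\beta>1$, this already shows $\partial_{y_2}Z_\varepsilon\to 0$ strongly in $L^p(\omega\times Y^*)$, hence $Z_\varepsilon\in L^p(\omega;W^{1,p}(Y^*))$ with its $y$-gradient bounded except that $\partial_{y_1}Z_\varepsilon$ need not be bounded on its own only after subtracting the macroscopic part. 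Accordingly, set $P_\varepsilon=Z_\varepsilon-\partial_{x_1}\varphi\,\big(y_1-\tfrac{1}{|Y^*|}\int_{Y^*}y_1\,dy\big)$, which still has zero mean over $Y^*$ and satisfies $\partial_{y_1}P_\varepsilon=\unf{\varepsilon}\partial_{x_1}\varphi^\varepsilon-\partial_{x_1}\varphi$, $\partial_{y_2}P_\varepsilon=\partial_{y_2}Z_\varepsilon$, $\partial_{y_3}P_\varepsilon=\unf{\varepsilon}\partial_{x_3}\varphi^\varepsilon$, all bounded in $L^p(\omega\times Y^*)$ (using $\partial_{x_1}\varphi\in L^p(\omega)$).

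Applying the Poincaré--Wirtinger inequality on $Y^*$ (valid thanks to hypothesis $(\mathbf{H_g})$, in particular $g_0>0$) bounds $P_\varepsilon$ in $L^p(\omega;W^{1,p}(Y^*))$, so up to a subsequence $P_\varepsilon\rightharpoonup\varphi^1$ weakly in $L^p(\omega;W^{1,p}(Y^*))$. Passing to the limit in the three identities for $\partial_{y_i}P_\varepsilon$ yields $\partial_{y_2}\varphi^1=0$, together with $\unf{\varepsilon}\partial_{x_1}\varphi^\varepsilon\rightharpoonup\partial_{x_1}\varphi+\partial_{y_1}\varphi^1$ and $\unf{\varepsilon}\partial_{x_3}\varphi^\varepsilon\rightharpoonup\partial_{y_3}\varphi^1$ weakly in $L^p(\omega\times Y^*)$. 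Finally, the $y_1$-periodicity of $\varphi^1$ is obtained verbatim as at the end of the proof of Theorem \ref{Theorem_conv_sequences_Laplacian}: one tests $P_\varepsilon(\cdot,\cdot,L_1,y_2,y_3)-P_\varepsilon(\cdot,\cdot,0,y_2,y_3)$ against $\psi\in\mathcal{D}(\omega\times Y^*)$, rewrites via the definitions of $P_\varepsilon$ and $\unf{\varepsilon}$, translates the $x_1$-variable by $\varepsilon^{\alpha}L_1=\varepsilon L_1$, integrates by parts, and lets $\varepsilon\to0$; the two surviving terms cancel, giving $\int_{\omega\times Y^*}\big(\varphi^1(\cdot,\cdot,L_1,y_2,y_3)-\varphi^1(\cdot,\cdot,0,y_2,y_3)\big)\psi=0$. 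I expect the only real points of care to be bookkeeping ones: checking that the $x_2$-direction genuinely contributes nothing here (so it must be treated separately, as in Theorem \ref{strong0}), and running the Poincaré--Wirtinger estimate on the possibly non-smooth cell $Y^*$; both are handled exactly as in the earlier analogous results.
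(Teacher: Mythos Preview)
Your proposal is correct and follows essentially the same route as the paper, which explicitly says the proof is analogous to Theorem \ref{Theorem_conv_sequences_Laplacian} using only the single operator $Z_\varepsilon^1$ rescaled by $1/\varepsilon$; your $Z_\varepsilon$, $P_\varepsilon$, the Poincar\'e--Wirtinger bound, and the periodicity argument all match. One small slip in your narrative: $\partial_{y_1}Z_\varepsilon=\unf{\varepsilon}\partial_{x_1}\varphi^\varepsilon$ \emph{is} already bounded in $L^p(\omega\times Y^*)$ by property (d), so the subtraction of $\partial_{x_1}\varphi\,(y_1-\langle y_1\rangle_{Y^*})$ is not needed for boundedness but rather to isolate the corrector $\varphi^1$ and obtain the identification $\partial_{y_1}P_\varepsilon\rightharpoonup\partial_{y_1}\varphi^1$ with the right periodicity; this does not affect the validity of your argument.
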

	\begin{proof}
    The proof is omitted as it is analogous to that of Theorem 4.1, considering only the operator $Z_\varepsilon^1$
  that reflects oscillations in the $x_1$ direction. That is,

		Since $\left|\left|\left|\varphi^\varepsilon\right|\right|\right|_{W^{1,p}(R^\varepsilon)}$ is uniformly bounded, one applies Proposition \ref{propositionconvergence} to obtain $\varphi\in W^{1,p}(\omega )$ such that the first convergence of \eqref{cv03} is satisfied.
		$$Z_\varepsilon^1(x_1,x_2,y)=\dfrac{1}{\varepsilon}\left(\unf{\varepsilon}\varphi^\varepsilon(x_1,x_2,y)-\dfrac{1}{|Y^*|}\int_{Y^*}\unf{\varepsilon}\varphi^\varepsilon(x_1,x_2,y)dy\right).$$
  \end{proof}

	\begin{theorem}
		Let $u^\varepsilon$ be the solution of \eqref{problem} with $f^\varepsilon\in L^2(R^\varepsilon)$ such that $|||f^\varepsilon|||_{L^2(R^\varepsilon)}\leq c$ with $c$ a positive constant independent of $\varepsilon$. Suppose that there is $f\in L^2(\omega \times Y^*)$ such that
		\begin{equation}
		\unf{\varepsilon} f^\varepsilon\rightharpoonup f\quad\textrm{in}\quad L^2(\omega \times Y^*).
		\end{equation}
		Then, there exist $u\in H^1(\omega )$ and $u^1\in L^2(\omega ;H^1_\#(Y^*))$ with $\partial_{y_2}u^1=0$
		\begin{equation}
		\begin{gathered}
		\unf{\varepsilon}u^\varepsilon\to u\quad\textrm{strongly in}\quad L^2(\omega ;H^1(Y^*)),\\
		\unf{\varepsilon}\partial_{x_1}u^\varepsilon\rightharpoonup \partial_{x_1}u +\partial_{y_1}u^1\quad\textrm{weakly in}\quad L^2(\omega \times Y^*),\\
        \unf{\varepsilon}\partial_{x_3}u^\varepsilon\rightharpoonup \partial_{y_3}u^1\quad\textrm{weakly in}\quad L^2(\omega \times Y^*),
		\end{gathered}
		\end{equation}
		The function $u$ satisfies
		\begin{equation}
		\left\lbrace
		\begin{array}{l}
		-q_1u_{x_1x_1}-q_2u_{x_2x_2}+u=\bar{f}\quad\textrm{in}\quad \omega ,\\
		(q_1u_{x_1},q_2u_{x_2}))\eta=0\quad\textrm{on}\quad\partial \omega ,
		\end{array}
		\right.
		\end{equation}
		where
		$$
        q_1=\frac{1}{|Y^*|}\int_{Y^*}(1-\partial_{y_1}X)dy,\quad q_2=\frac{g_0}{\langle g\rangle_{(0,L_1)\times (0,L_2)}}\quad\textrm{and}\quad \bar{f}=\langle f \rangle_{Y^*}.
		$$
		and $X$ is the solution of the auxiliary problem
		\begin{equation}
		\begin{gathered}
		\int_{Y^*}\left(\partial_{y_1}X\partial_{y_1}\psi+\partial_{y_3}X\partial_{y_3}\psi \right)dy=\int_{Y^*}\partial_{y_1}\psi dy,\quad\forall\psi\in V(Y^*)\\
		\int_{Y^*}X dy=0, X\in V(Y^*)=\{\varphi\in H^1_\#(Y^*):\partial_{y_2}\varphi=0 \}
		\end{gathered}
		\end{equation}
	\end{theorem}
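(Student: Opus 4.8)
The plan is to build the result on the two compactness statements already available in this section: Theorem \ref{Theorem_conv_sequences_Laplacian_resonant_strong} takes care of the resonant directions $x_1$ and $x_3$, and Theorem \ref{strong0} shows that the contribution of the fast $x_2$-oscillations to the diffusion degenerates in the limit; then one identifies the homogenized problem by the usual oscillating-test-function argument, treating the flat slab $R_-$ separately. First one records the a priori bound: testing \eqref{variationalproblem} with $\varphi=u^\varepsilon$, using Hölder on the right-hand side together with $|||f^\varepsilon|||_{L^2(R^\varepsilon)}\le c$, and dividing by $\varepsilon$ gives $|||u^\varepsilon|||_{H^1(R^\varepsilon)}\le C$, so that $u^\varepsilon$ and all $\partial_{x_i}u^\varepsilon$ satisfy the u.c.i. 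Theorem \ref{Theorem_conv_sequences_Laplacian_resonant_strong} applied to $u^\varepsilon$ then provides $u\in H^1(\omega)$ and $u^1\in L^2(\omega;H^1_\#(Y^*))$ with $\partial_{y_2}u^1=0$ realising the three convergences in the statement, while Theorem \ref{strong0} provides $u_2\in L^2(\omega\times Y^*_+)$ with $\unf{\varepsilon}^+(\partial_{x_2}u^\varepsilon_+)\rightharpoonup u_2$; since $Y^*_+\subset(0,L_1)\times(0,L_2)\times(g_0,g_1)$, its conclusion $\tilde u_2\equiv0$ there forces $u_2\equiv0$ on $\omega\times Y^*_+$.

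Next I treat the flat slab $R_-$. By Proposition \ref{Proprescaling}, $\Pi_\varepsilon u^\varepsilon_-$ is bounded in $H^1(R_-)$ with $\|\partial_{x_3}\Pi_\varepsilon u^\varepsilon_-\|_{L^2(R_-)}\le C\varepsilon$; combining the one-dimensional Poincaré--Wirtinger inequality, the identity $V^\varepsilon=g_0^{-1}\int_0^{g_0}\Pi_\varepsilon u^\varepsilon_-\,dx_3$ (with $V^\varepsilon$ the average \eqref{Vauxiliar} of $u^\varepsilon$), and the convergence $V^\varepsilon\rightharpoonup u$ of Proposition \ref{propositionconvergence}, one gets $\Pi_\varepsilon u^\varepsilon_-\rightharpoonup u$ weakly in $H^1(R_-)$, hence $\Pi_\varepsilon\partial_{x_2}u^\varepsilon_-\rightharpoonup\partial_{x_2}u$ weakly in $L^2(R_-)$. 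Writing \eqref{variationalproblem} in the unfolded and split form \eqref{variational_unfolded_RS} (with $\alpha=1$) and testing with $\varphi\in H^1(\omega)$ --- so that $\unf{\varepsilon}\partial_{x_3}\varphi=0$, $\unf{\varepsilon}\varphi\to\varphi$ and $\unf{\varepsilon}\partial_{x_i}\varphi\to\partial_{x_i}\varphi$ strongly by Proposition \ref{Proposition_convergence_test_functions}, and $\Pi_\varepsilon\partial_{x_2}\varphi=\partial_{x_2}\varphi$ --- the terms over $R^\varepsilon_1$ and $R^\varepsilon_{+1}$ vanish by the u.c.i., the term carrying $\unf{\varepsilon}^+(\partial_{x_2}u^\varepsilon_+)$ vanishes because $u_2\equiv0$ on $\omega\times Y^*_+$, and (using Proposition \ref{Proposition_unfolding_properties}(c) and (i) of Proposition \ref{Proprescaling}) one is left with
\begin{equation*}
\frac{1}{L_1L_2}\int_{\omega\times Y^*}\big[(\partial_{x_1}u+\partial_{y_1}u^1)\partial_{x_1}\varphi+u\varphi\big]\,dx_1dx_2dy+\int_{R_-}\partial_{x_2}u\,\partial_{x_2}\varphi\,dx=\frac{1}{L_1L_2}\int_{\omega\times Y^*}f\varphi\,dx_1dx_2dy .
\end{equation*}

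To identify $u^1$ I use the oscillating test function $w^\varepsilon(x_1,x_2,x_3)=\varepsilon\,\phi(x_1,x_2)\,\psi(x_1/\varepsilon,x_3/\varepsilon)$ with $\phi\in\mathcal{D}(\omega)$ and $\psi\in C^\infty_{\#1}([0,L_1]\times[0,g_1])$ ($L_1$-periodic in $y_1$, so $w^\varepsilon\in H^1(R^\varepsilon)$); a direct computation yields $\unf{\varepsilon}w^\varepsilon\to0$, $\unf{\varepsilon}\partial_{x_1}w^\varepsilon\to\phi\,\partial_{y_1}\psi$, $\unf{\varepsilon}\partial_{x_3}w^\varepsilon\to\phi\,\partial_{y_3}\psi$, $\unf{\varepsilon}\partial_{x_2}w^\varepsilon\to0$ strongly in $L^2(\omega\times Y^*)$, and $\Pi_\varepsilon\partial_{x_2}w^\varepsilon\to0$. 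Substituting $w^\varepsilon$ in \eqref{variational_unfolded_RS} and letting $\varepsilon\to0$ gives, for a.e. $(x_1,x_2)\in\omega$,
\begin{equation*}
\int_{Y^*}\big[\partial_{y_1}u^1\,\partial_{y_1}\psi+\partial_{y_3}u^1\,\partial_{y_3}\psi\big]\,dy=-\partial_{x_1}u\int_{Y^*}\partial_{y_1}\psi\,dy ,
\end{equation*}
first for all such $\psi$ and then, by density, for every $\psi\in V(Y^*)=\{\varphi\in H^1_\#(Y^*):\partial_{y_2}\varphi=0\}$. Comparison with the auxiliary problem for $X$ gives $u^1=-\partial_{x_1}u\,X$ up to an additive function of $(x_1,x_2)$, hence $\partial_{y_1}u^1=-\partial_{x_1}u\,\partial_{y_1}X$ and $\partial_{y_3}u^1=-\partial_{x_1}u\,\partial_{y_3}X$. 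Plugging this into the previous identity, using $|R_-(x_1,x_2)|=g_0$, $|Y^*|/(L_1L_2)=\langle g\rangle_{(0,L_1)\times(0,L_2)}$ and $\bar f=\langle f\rangle_{Y^*}$, and dividing by $\langle g\rangle_{(0,L_1)\times(0,L_2)}$, one obtains exactly
\begin{equation*}
\int_{\omega}\big(q_1 u_{x_1}\varphi_{x_1}+q_2 u_{x_2}\varphi_{x_2}+u\varphi\big)\,dx_1dx_2=\int_{\omega}\bar f\,\varphi\,dx_1dx_2,\qquad\forall\varphi\in H^1(\omega),
\end{equation*}
with $q_1=|Y^*|^{-1}\int_{Y^*}(1-\partial_{y_1}X)\,dy$ and $q_2=g_0/\langle g\rangle_{(0,L_1)\times(0,L_2)}$. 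Testing the auxiliary problem with $\psi=X$ shows $q_1=|Y^*|^{-1}\int_{Y^*}|e_1-\nabla_{y_1y_3}X|^2\,dy>0$ and clearly $q_2>0$, so the limit problem is coercive, $u$ is the unique weak solution, and the convergences hold for the whole sequence.

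The step I expect to be the genuine obstacle is the handling of the fast $x_2$-direction: one must split $R^\varepsilon$ into its oscillating and flat parts, combine $\unf{\varepsilon}$, $\unf{\varepsilon}^+$ and $\Pi_\varepsilon$ consistently inside one variational identity, check that the boundary-layer contributions on $R^\varepsilon_1$ and $R^\varepsilon_{+1}$ are negligible, and --- most importantly --- invoke Theorem \ref{strong0} to annihilate the $x_2$-diffusion coming from the oscillating region, so that only the flat cylinder of height $g_0$ survives and the coefficient $q_2=g_0/\langle g\rangle$ emerges. Correspondingly, the oscillating test function $w^\varepsilon$ must be chosen independent of $y_2$, matching the constraint $\partial_{y_2}u^1=0$, which is precisely what makes the corrector problem live in the reduced space $V(Y^*)$ rather than in $H^1_\#(Y^*)$.
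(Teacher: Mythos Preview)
Your proof is correct and follows essentially the same route as the paper's: uniform bound, application of Theorems \ref{Theorem_conv_sequences_Laplacian_resonant_strong} and \ref{strong0}, passage to the limit in the split/unfolded variational formulation \eqref{variational_unfolded_RS} with test functions in $H^1(\omega)$, identification of $u^1=-\partial_{x_1}u\,X$ via the oscillating test function $\varepsilon\phi(x_1,x_2)\psi(x_1/\varepsilon,x_3/\varepsilon)$, and identification $u^-=u$ on the flat slab. You add a bit of extra detail the paper omits (the Poincar\'e--Wirtinger argument for $\Pi_\varepsilon u^\varepsilon_-\rightharpoonup u$ in $H^1(R_-)$ and the coercivity/uniqueness remark), but the strategy and all key steps coincide.
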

	\begin{proof}
	The uniform bound for the solutions follows as in the previous subsections.	
  Using basic properties of the unfolding and rescaling operators we can rewrite the variational formulation of \eqref{problem} as \eqref{variational_unfolded_RS}. By Theorem \ref{Theorem_conv_sequences_Laplacian_resonant_strong} and \ref{strong0} we can guarantee that there is $u^1\in L^2(\omega ;H^1_\#(Y^*))$ with $\partial_{y_2} u^1=0$ such that
		\begin{equation}
		\begin{gathered}
		\unf{\varepsilon} u^\varepsilon\to u\quad\textrm{strongly in}\quad L^2(\omega ;H^1(Y^*)),\\
		\unf{\varepsilon}\partial_{x_1} u^\varepsilon\rightharpoonup \partial_{x_1} u+\partial_{y_1} u^1\quad\textrm{weakly in}\quad L^2(\omega \times Y^*),\\
		\unf{\varepsilon}\partial_{x_3} u^\varepsilon\to +\partial_{y_3} u^1\quad\textrm{weakly in}\quad L^2(\omega \times Y^*),\\
		\unf{\varepsilon}^+\partial_{x_2} u^\varepsilon_+\rightharpoonup 0\quad\textrm{weakly in}\quad L^2(\omega \times Y^*_+),\\
		\Pi_\varepsilon u^\varepsilon_-\rightharpoonup u^-\quad\textrm{weakly in}\quad H^1(R_-).
		\end{gathered}
		\end{equation}
		
		For test functions depending only on $x_1,x_2$, we can pass to the limit in \eqref{variational_unfolded_RS}, obtaining
		\begin{equation}\label{prelimit_RS}
		\begin{gathered}
		\frac{1}{L_1L_2}\int_{\omega \times Y^*}\left((\partial_{x_1} u+\partial_{y_1} u^1)\partial_{x_1}\varphi +u\varphi\right) dx_1dx_2dy\\
		+\int_{R_-}\partial_{x_2} u^-\partial_{x_2}\varphi dx=\frac{1}{L_1L_2}\int_{\omega \times Y^*}\hat{f}\varphi dx_1dx_2dy.
		\end{gathered}
		\end{equation} 
		
		Let
		\begin{equation}
				\varphi^\varepsilon(x_1,x_2,x_3)=\varepsilon\phi(x_1,x_2)\psi\left(\frac{x_1}{\varepsilon},\frac{x_3}{\varepsilon}\right)=\varepsilon\phi\psi^\varepsilon,
		\end{equation}
		where $\phi\in\mathcal{D}(\omega )$ and $\psi\in H^1_\#(Y^*)$ with $\partial_{y_2}\psi=0$. Notice that
		$$
		\begin{gathered}
		\partial_{x_1}\varphi^\varepsilon=\varepsilon\partial_{x_1}\phi\psi^\varepsilon+\phi\partial_{y_1}\psi^\varepsilon,\\
		\partial_{x_2}\varphi^\varepsilon=\varepsilon\partial_{x_2}\phi\psi^\varepsilon,\\
		\partial_{x_3}\varphi^\varepsilon=\phi\partial_{y_3}\psi^\varepsilon.
		\end{gathered}
		$$
		
        Then, taking $\varphi^\varepsilon$ as a test function in \eqref{variational_unfolded_RS} leads us to
		$$
		\int_{\omega \times Y^*}\left[(\partial_{x_1} u+\partial_{y_1}u^1)\phi\partial_{y_1}\psi +\partial_{y_3}u^1\phi\partial_{y_3}\psi\right]  dx_1dx_2dy=0.
		$$
		From this and recalling that $X$ satisfies
		\begin{equation}
		\begin{gathered}
		\int_{Y^*}\left(\partial_{y_1}X\partial_{y_1}\psi+\partial_{y_3}X\partial_{y_3}\psi \right)dy=\int_{Y^*}\partial_{y_1}\psi dy,\quad\forall\psi\in V(Y^*)\\
		\int_{Y^*}X dy=0, X\in V(Y^*)=\{\varphi\in H^1_\#(Y^*):\partial_{y_2}\varphi=0 \}
		\end{gathered}
		\end{equation}
		one can obtain that
		$$
		u^1=-\partial_{x_1}u X\quad\textrm{a.e}\quad \omega \times Y^*.
		$$
		
		Notice that
		\begin{equation}
		||\Pi_\varepsilon u^\varepsilon_--u||_{L^2(R_-)}\leq||| u^\varepsilon-u|||_{L^2(R^\varepsilon)}\to0,
		\end{equation}
		which means $u^-=u$ a.e. in $\omega $.

		Finally, one can rewrite \eqref{prelimit_RS} as
		\begin{equation}
		\begin{gathered}
			\int_{\omega } q_1\partial_{x_1}u\partial_{x_1}+q_2\partial_{x_2}u\partial_{x_2}+\varphi+u\varphi dx_1dx_2=\int_{\omega }\bar{f}\varphi dx_1dx_2,\quad\forall\varphi\in H^1(\omega ),
		\end{gathered}
		\end{equation}
		where
		$$
		 q_1=\frac{1}{|Y^*|}\int_{Y^*}(1-\partial_{y_1}X)dy,\quad q_2=\frac{g_0}{\langle g\rangle_{(0,L_1)\times (0,L_2)}}\quad\textrm{and}\quad \bar{f}=\langle f \rangle_{Y^*}.
		$$
	\end{proof}

	\subsection{Weak and strong oscillations}
	
	In this subsection, we suppose oscillations of order $\varepsilon^\alpha$ and $\varepsilon^\beta$ with $0<\alpha<1<\beta$.
	\begin{equation}\label{TDS_S_W}
	R^\varepsilon=\left\{(x_1,x_2,x_3)\in\mathbb{R}^3:(x_1,x_2)\in \omega ,0<x_3<\varepsilon g\left(\frac{x_1}{\varepsilon^\alpha},\frac{x_2}{\varepsilon^\beta}\right) \right\}.
	\end{equation}
    The approach is very similar to the previous case.
	
	\begin{theorem}\label{Theorem_conv_sequences_Laplacian_strong_weak}
		Let $\varphi^\varepsilon\in W^{1,p}(R^\varepsilon)$ with $|||\varphi^\varepsilon|||_{W^{1,p}(R^\varepsilon)}$ uniformly bounded. Then, there are $\varphi\in W^{1,p}(\omega )$ and $\varphi_1\in L^p\left(\omega ;W^{1,p}_\#(Y^*)\right)$ with 
		$\partial_{y_2}\varphi^1=\partial_{y_3}\varphi^1=0$ such that
		\begin{equation}
		\begin{gathered}
		\unf{\varepsilon}\varphi^\varepsilon\to \varphi\quad\textrm{strongly in}\quad L^2\left(\omega ;H^{1}_\#(Y^*)\right)\\
		\unf{\varepsilon}\partial_{x_1}\varphi^\varepsilon\rightharpoonup \partial_{x_1}\varphi+\partial_{y_1}\varphi^1\quad\textrm{weakly in}\quad L^2(\omega \times Y^*).
		\end{gathered}
		\end{equation}
	\end{theorem}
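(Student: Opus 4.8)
The plan is to follow the scheme already used in the proofs of Theorem~\ref{Theorem_conv_sequences_Laplacian} and Theorem~\ref{Theorem_conv_sequences_Laplacian_resonant_strong}, exploiting the fact that when $0<\alpha<1<\beta$ the only microscopic scale that is not faster than the thickness $\eps$ of the domain is $\eps^\alpha$. Hence only the $x_1$-direction produces a first-order corrector and a single auxiliary operator suffices; the oscillations in $x_2$, of order $\eps^\beta$ with $\beta>1$, do not enter this compactness statement and are handled separately through $\mathcal{T}^+_\eps$ and $\Pi_\eps$ as in Theorem~\ref{strong0}. First I would invoke Proposition~\ref{propositionconvergence}, which already provides $\varphi\in W^{1,p}(\omega)$ together with the strong convergence $\unf{\varepsilon}\varphi^\varepsilon\to\varphi$ in $L^p(\omega;W^{1,p}(Y^*))$.

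Next I would introduce the operator associated with the smallest exponent $\alpha$,
$$Z_\varepsilon^1(x_1,x_2,y)=\dfrac{1}{\varepsilon^\alpha }\Big(\unf{\varepsilon}\varphi^\varepsilon(x_1,x_2,y)-\dfrac{1}{|Y^*|}\int_{Y^*}\unf{\varepsilon}\varphi^\varepsilon(x_1,x_2,y)\,dy\Big),$$
which, for every $(x_1,x_2)$, has zero average over $Y^*$. By property (e) of Proposition~\ref{Proposition_unfolding_properties} one has $\partial_{y_1}Z_\varepsilon^1=\unf{\varepsilon}\partial_{x_1}\varphi^\varepsilon$, $\partial_{y_2}Z_\varepsilon^1=\eps^{\beta-\alpha}\unf{\varepsilon}\partial_{x_2}\varphi^\varepsilon$ and $\partial_{y_3}Z_\varepsilon^1=\eps^{1-\alpha}\unf{\varepsilon}\partial_{x_3}\varphi^\varepsilon$; using the estimates in property (f) together with the uniform bound on $|||\varphi^\varepsilon|||_{W^{1,p}(R^\varepsilon)}$, the first derivative stays bounded in $L^p(\omega\times Y^*)$ while the other two are $O(\eps^{\beta-\alpha})$ and $O(\eps^{1-\alpha})$, hence tend to $0$ because $\beta-\alpha>0$ and $1-\alpha>0$.

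Then I would pass to the corrected sequence
$$P_\eps^1(x_1,x_2,y)=Z_\eps^1(x_1,x_2,y)-\frac{\partial\varphi}{\partial x_1}(x_1,x_2)\Big(y_1-\frac{1}{|Y^*|}\int_{Y^*}y_1\,dy\Big),$$
which still has zero average over $Y^*$; by the Poincaré--Wirtinger inequality and the derivative bounds above, $(P_\eps^1)$ is bounded in $L^p(\omega;W^{1,p}(Y^*))$ with $\partial_{y_2}P_\eps^1\to0$ and $\partial_{y_3}P_\eps^1\to0$ in $L^p(\omega\times Y^*)$. Extracting a subsequence, $P_\eps^1\rightharpoonup\varphi_1$ weakly in $L^p(\omega;W^{1,p}(Y^*))$ with $\partial_{y_2}\varphi_1=\partial_{y_3}\varphi_1=0$, and passing to the limit in $\partial_{y_1}P_\eps^1=\unf{\varepsilon}\partial_{x_1}\varphi^\varepsilon-\partial_{x_1}\varphi$ gives the announced $\unf{\varepsilon}\partial_{x_1}\varphi^\varepsilon\rightharpoonup\partial_{x_1}\varphi+\partial_{y_1}\varphi_1$ in $L^p(\omega\times Y^*)$.

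Finally I would establish the $L_1$-periodicity of $\varphi_1$ in $y_1$, exactly as at the end of the proof of Theorem~\ref{Theorem_conv_sequences_Laplacian}: testing $P_\eps^1(x_1,x_2,L_1,y_2,y_3)-P_\eps^1(x_1,x_2,0,y_2,y_3)$ against an arbitrary $\psi\in\mathcal{D}(\omega\times Y^*)$, rewriting it through the definitions of $P_\eps^1$ and $\unf{\varepsilon}$, shifting $x_1$ by $\eps^\alpha L_1$ and integrating by parts, so that the limit of the resulting expression vanishes and forces the traces of $\varphi_1$ on $\{y_1=0\}$ and $\{y_1=L_1\}$ to coincide. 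I expect no serious obstacle: the argument is structurally identical to, and in fact lighter than, that of Theorem~\ref{Theorem_conv_sequences_Laplacian}, since here there is a single corrector instead of three; the only points requiring a little care are the Poincaré--Wirtinger step on the possibly non-Lipschitz cell $Y^*$ and the bookkeeping of the non-unfolded strip $\Lambda_\eps$ (of vanishing measure) in the change of variables.
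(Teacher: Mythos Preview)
Your proposal is correct and follows exactly the route the paper indicates: the paper's own proof consists of the single sentence ``The proof is analogous to Theorems~\ref{Theorem_conv_sequences_Laplacian} and~\ref{Theorem_conv_sequences_Laplacian_resonant_strong}'', and you have accurately reconstructed that analogy by retaining only the operator $Z^1_\eps$ scaled by $\eps^{-\alpha}$, observing that $\partial_{y_2}Z^1_\eps$ and $\partial_{y_3}Z^1_\eps$ vanish in the limit since $\beta-\alpha>0$ and $1-\alpha>0$, and then running the Poincar\'e--Wirtinger and periodicity arguments verbatim from Theorem~\ref{Theorem_conv_sequences_Laplacian}.
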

	\begin{proof}
		The proof is analogous to Theorems \ref{Theorem_conv_sequences_Laplacian} and \ref{Theorem_conv_sequences_Laplacian_resonant_strong}.
	\end{proof}
	\begin{theorem}
			Let $u^\varepsilon$ be the solution of \eqref{problem} with $f^\varepsilon\in L^2(R^\varepsilon)$ such that $|||f^\varepsilon|||_{L^2(R^\varepsilon)}\leq c$ with $c$ a positive constant independent of $\varepsilon$. Suppose that there is $f\in L^2(\omega \times Y^*)$ such that
		\begin{equation}
		\unf{\varepsilon} f^\varepsilon\rightharpoonup f\quad\textrm{in}\quad L^2(\omega \times Y^*).
		\end{equation}
		Then, there exist $u\in H^1(\omega )$ and $u^1\in L^2(\omega ;H^1_\#(Y^*))$ with $\partial_{y_3}u^1=\partial_{y_2}u^1=0$ such that
		\begin{equation}
		\begin{gathered}
		\unf{\varepsilon}u^\varepsilon\to u\quad\textrm{strongly in}\quad L^2\left(\omega ;H^{1}_\#(Y^*)\right)\\
		\unf{\varepsilon}\partial_{x_1}u^\varepsilon\rightharpoonup \partial_{x_1}u+\partial_{y_1}u_1\quad\textrm{weakly in}\quad L^2(\omega \times Y^*).
		\end{gathered}
		\end{equation}
		\begin{equation}
		\left\lbrace
		\begin{array}{l}
		-q_1 u_{x_1x_1}-q_2u_{x_2x_2}+u=\bar{f}\quad\textrm{in}\quad \omega ,\\
		\left(q_1u_{x_1},q_2u_{x_2}\right)\eta=0\quad\textrm{on}\quad\partial \omega ,
		\end{array}
		\right.
		\end{equation}
		where 
		\begin{align*}
        &q_1=\frac{1}{\langle 1/\bar{g}\rangle_{(0,L_1)} \langle g\rangle_{(0,L_1)\times (0,L_2)}},
        \quad q_2=\frac{g_0}{\langle g\rangle_{(0,L_1)\times (0,L_2)}}\\
        &\bar{g}(y_1)=\int_0^{L_2}g(y_1,y_2) dy_2\quad\textrm{and}\quad
		\bar{f}=\langle f \rangle_{Y^*}.
		\end{align*}
		
	\end{theorem}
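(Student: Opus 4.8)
The plan is to follow the template established in the resonant-strong subsection, combining the weak-oscillation mechanism in the $x_1$-direction with the strong-oscillation mechanism in the $x_2$-direction. First I would obtain the uniform bound $\vert\vert\vert u^\varepsilon\vert\vert\vert_{H^1(R^\varepsilon)}\le c$ exactly as before: take $\varphi=u^\varepsilon$ in \eqref{variationalproblem}, apply H\"older on the right-hand side and multiply by $\varepsilon^{-1}$. Then Theorem \ref{Theorem_conv_sequences_Laplacian_strong_weak} (applied to $\varphi^\varepsilon=u^\varepsilon$) yields $u\in H^1(\omega)$ and $u^1\in L^2(\omega;H^1_\#(Y^*))$ with $\partial_{y_2}u^1=\partial_{y_3}u^1=0$ such that $\unf{\varepsilon}u^\varepsilon\to u$ strongly in $L^2(\omega;H^1(Y^*))$ and $\unf{\varepsilon}\partial_{x_1}u^\varepsilon\rightharpoonup\partial_{x_1}u+\partial_{y_1}u^1$ weakly in $L^2(\omega\times Y^*)$; while Theorem \ref{strong0} applied to the restriction $u^\varepsilon_+$ gives $\unf{\varepsilon}^+\partial_{x_2}u^\varepsilon_+\rightharpoonup u_2$ weakly in $L^2(\omega\times Y^*_+)$ with $\tilde u_2=0$ a.e.\ in $\omega\times(0,L_1)\times(0,L_2)\times(g_0,g_1)$. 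On the non-oscillating block, $\Pi_\varepsilon u^\varepsilon_-\rightharpoonup u^-$ weakly in $H^1(R_-)$, and since $\Vert\Pi_\varepsilon u^\varepsilon_--u\Vert_{L^2(R_-)}\le\vert\vert\vert u^\varepsilon-u\vert\vert\vert_{L^2(R^\varepsilon)}\to0$ by Proposition \ref{propositionconvergence}, we get $u^-=u$, and hence $\Pi_\varepsilon\partial_{x_2}u^\varepsilon_-\rightharpoonup\partial_{x_2}u$ in $L^2(R_-)$.

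Next I would rewrite the variational formulation as the unfolded/rescaled identity \eqref{variational_unfolded_RS} (valid since $\beta>1$ forces the $x_2$-term to be split into the oscillating part on $R^\varepsilon_+$, handled by $\unf{\varepsilon}^+$, and the flat part on $R^\varepsilon_-$, handled by $\Pi_\varepsilon$). Passing to the limit with test functions $\varphi\in H^1(\omega)$ (using that the u.c.i.\ terms over $R^\varepsilon_1$ vanish and that on $Y^*_+$ the limit $\tilde u_2$ has no contribution against a $y$-independent test function because $\tilde u_2=0$ for $y_3\in(g_0,g_1)$) gives the pre-limit equation
\begin{equation}\label{prelimit_SW}
\frac{1}{L_1L_2}\int_{\omega\times Y^*}\big((\partial_{x_1}u+\partial_{y_1}u^1)\partial_{x_1}\varphi+u\varphi\big)\,dx_1dx_2dy+\int_{R_-}\partial_{x_2}u\,\partial_{x_2}\varphi\,dx=\frac{1}{L_1L_2}\int_{\omega\times Y^*}f\varphi\,dx_1dx_2dy.
\end{equation}
To identify $u^1$ I would use test functions of the form $V^\varepsilon(x_1,x_2,x_3)=\varepsilon^\alpha\phi(x_1,x_2)\psi(x_1/\varepsilon^\alpha)$ with $\phi\in\mathcal{D}(\omega)$ and $\psi\in H^1_\#(0,L_1)$; then $\partial_{x_1}V^\varepsilon=\varepsilon^\alpha\partial_{x_1}\phi\,\psi^\varepsilon+\phi\,\psi'^\varepsilon$, while $\partial_{x_2}V^\varepsilon$ and $\partial_{x_3}V^\varepsilon$ carry factors $\varepsilon^\alpha$, so that $\unf{\varepsilon}V^\varepsilon\to0$, $\unf{\varepsilon}\partial_{x_1}V^\varepsilon\to\phi\,\partial_{y_1}\psi$, and the other unfolded derivatives tend to $0$. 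Plugging in and using that $u,u^1$ are independent of $y_2,y_3$, the $Y^*$-integral reduces to an integral against $\bar g(y_1)=\int_0^{L_2}g(y_1,y_2)\,dy_2$, yielding $(\partial_{x_1}u+\partial_{y_1}u^1)\bar g(y_1)=T(x_1,x_2)$; periodicity of $\partial_{y_1}u^1$ then forces, exactly as in the weak-oscillation case,
\begin{equation}
\partial_{y_1}u^1(x_1,x_2,y_1)=\Big(\frac{1}{\langle 1/\bar g\rangle_{(0,L_1)}\,\bar g(y_1)}-1\Big)\partial_{x_1}u(x_1,x_2).
\end{equation}

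Finally I would substitute this expression into \eqref{prelimit_SW}. The $x_1$-term contributes $\langle 1/(\langle 1/\bar g\rangle_{(0,L_1)}\bar g)\rangle$-type averaging: after dividing by $L_1L_2$ and using $\tfrac1{L_1L_2}\int_{\omega\times Y^*}=\tfrac1{L_1L_2}\int_\omega\!\!\int_{Y^*}$ with $|Y^*(y_1,y_2)|=g(y_1,y_2)$, the effective coefficient comes out as $q_1=1/\big(\langle 1/\bar g\rangle_{(0,L_1)}\langle g\rangle_{(0,L_1)\times(0,L_2)}\big)$. The $x_2$-term is only the flat block $\int_{R_-}\partial_{x_2}u\,\partial_{x_2}\varphi\,dx=g_0\int_\omega\partial_{x_2}u\,\partial_{x_2}\varphi$, which after normalizing by $\langle g\rangle$ gives $q_2=g_0/\langle g\rangle_{(0,L_1)\times(0,L_2)}$; the reaction and source terms give the lower-order term $u$ and $\bar f=\langle f\rangle_{Y^*}$. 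This is the weak formulation of the stated Neumann problem. I expect the main obstacle to be the careful bookkeeping in the limit of \eqref{variational_unfolded_RS}: one must check that the $\unf{\varepsilon}^+\partial_{x_2}u^\varepsilon_+\,\unf{\varepsilon}^+\partial_{x_2}\varphi$ term genuinely drops out for $x_1,x_2$-only test functions (using $\tilde u_2=0$ on the oscillating layer together with the fact that on $Y^*_0$ it is subsumed into the $R_-$ contribution via $\Pi_\varepsilon$), and that no spurious contribution survives from $u^2$-type correctors in the $x_2$-direction — i.e.\ that strong oscillations in $x_2$ truly kill the diffusion except through the minimal thickness $g_0$, which is precisely the content of Theorem \ref{strong0}.
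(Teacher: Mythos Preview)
Your proposal is correct and follows essentially the same route as the paper: apply Theorem~\ref{Theorem_conv_sequences_Laplacian_strong_weak} for the $x_1$-convergence, Theorem~\ref{strong0} for the vanishing of the $x_2$-contribution on the oscillating layer, split the variational formulation via $\unf{\varepsilon}^+$ and $\Pi_\varepsilon$, pass to the limit with $\varphi\in H^1(\omega)$, and then identify $\partial_{y_1}u^1$ using the oscillating test function $V^\varepsilon=\varepsilon^\alpha\phi(x_1,x_2)\psi(x_1/\varepsilon^\alpha)$. Your bookkeeping concern is exactly the point handled (implicitly) by Theorem~\ref{strong0} together with the identification $u^-=u$, and your derivation of $q_1$, $q_2$, $\bar f$ matches the paper's.
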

	\begin{proof}
		Since the solutions of \eqref{problem} are uniformly bounded, we are in position to apply Theorem \ref{Theorem_conv_sequences_Laplacian_strong_weak}, which means that there are $u\in H^1(\omega )$, $u_1\in L^2\left(\omega ;H^1_\#(Y^*)\right)$ and $u_2\in L^2(\omega \times Y^*)$ such that 
		\begin{equation}
		\begin{gathered}
		\unf{\varepsilon}u^\varepsilon\to u\quad\textrm{strongly in}\quad L^2\left(\omega ;H^1_\#(Y^*)\right)\\
		\unf{\varepsilon}\partial_{x_1}u^\varepsilon\rightharpoonup \partial_{x_1}u+\partial_{y_1}u^1\quad\textrm{weakly in}\quad L^2(\omega \times Y^*).
		\end{gathered}
		\end{equation}
		
		By Propositions \ref{Proposition_unfolding_properties} and \ref{Proprescaling} we get that 
		\begin{equation}
		\begin{gathered}\label{variational_unfolded_WS}
		\frac{1}{L_1L_2}\int_{\omega \times Y^*}\left(\unf{\varepsilon}\partial_{x_1} u^\varepsilon\unf{\varepsilon}\partial_{x_1}\varphi +\unf{\varepsilon}\partial_{x_3} u^\varepsilon\unf{\varepsilon}\partial_{x_3}\varphi +\unf{\varepsilon}u^\varepsilon\unf{\varepsilon}\varphi\right) dx_1dx_2dy\\
		+\frac{1}{\varepsilon}\int_{R^\varepsilon_1}\left(\partial_{x_1} u^\varepsilon\partial_{x_1}\varphi +\partial_{x_3} u^\varepsilon\partial_{x_3}\varphi +u^\varepsilon\varphi\right) dx\\
		\frac{1}{L_1L_2}\int_{\omega \times Y^*_+}\unf{\varepsilon}^+\partial_{x_2} u^\varepsilon_+\unf{\varepsilon}^+\partial_{x_2}\varphi dx_1dx_2dy+\frac{1}{\varepsilon} \int_{R^\varepsilon_{+1}}\partial_{x_2} u^\varepsilon_+\partial_{x_2}\varphi dx\\
		+\int_{R_-}\Pi_\varepsilon\partial_{x_2} u^\varepsilon_-\Pi_\varepsilon\partial_{x_2}\varphi dx=\frac{1}{L_1L_2}\int_{\omega \times Y^*}\unf{\varepsilon}f^\varepsilon\unf{\varepsilon}\varphi dx_1dx_2dy+\frac{1}{\varepsilon}\int_{R^\varepsilon_1} f^\varepsilon \varphi dx.
		\end{gathered}
		\end{equation}
		We can pass to the limit the equation above taking test functions depending only on $(x_1,x_2)$ to find
		\begin{equation}
		\begin{gathered}\label{prelimit_WS}
		\frac{1}{L_1L_2}\int_{\omega \times Y^*}\left((\partial_{x_1}u+\partial_{y_1}u_1)\partial_{x_1}\varphi +u\varphi\right) dx_1dx_2dy\\
		+\int_{R_-}\partial_{x_2} u_-\partial_{x_2}\varphi dx=\frac{1}{L_1L_2}\int_{\omega \times Y^*}\hat{f}\varphi dx_1dx_2dy.
		\end{gathered}
		\end{equation}
		To identify $u_1$, one can perform similar arguments as in previous sections: 
		\begin{equation}
		\partial_{y_1}u_1(x_1,x_2,y_1)=\left(\frac{1}{\langle 1/\bar{g} \rangle_{(0,L_1)} \bar{g}(y_1) }-1\right)\partial_{x_1} u\quad\textrm{a.e. in}\quad \omega \times (0,L_1),
		\end{equation}
		where
		$$
		\bar{g}(y_1)=\int_0^{L_2}g(y_1,y_2) dy_2.
		$$
		
		This means that the resulting limit problem is
		\begin{equation}
		\begin{gathered}
			\int_{\omega } q_1\partial_{x_1}u\partial_{x_1}+q_2\partial_{x_2}u\partial_{x_2}+\varphi+u\varphi dx_1dx_2=\int_{\omega }\bar{f}\varphi dx_1dx_2,\quad\forall\varphi\in H^1(\omega ),
		\end{gathered}
		\end{equation}
		where
		\begin{align*}
        &q_1=\frac{1}{\langle 1/\bar{g}\rangle_{(0,L_1)} \langle g\rangle_{(0,L_1)\times (0,L_2)}},
        \quad q_2=\frac{g_0}{\langle g\rangle_{(0,L_1)\times (0,L_2)}}\\
        &\bar{g}(y_1)=\int_0^{L_2}g(y_1,y_2) dy_2\quad\textrm{and}\quad
		\bar{f}=\langle f \rangle_{Y^*}.
		\end{align*}
	\end{proof}
	
	\subsection{Strong oscillations}
	
	In this subsection, we suppose oscillations of order $\varepsilon^\alpha$ and $\varepsilon^\beta$ with $0<1<\alpha<\beta$.
	\begin{equation}\label{TDS_S_S}
	R^\varepsilon=\left\{(x_1,x_2,x_3)\in\mathbb{R}^3:(x_1,x_2)\in \omega ,0<x_3<\varepsilon g\left(\frac{x_1}{\varepsilon^\alpha},\frac{x_2}{\varepsilon^\beta}\right) \right\}
	\end{equation}
	
	\begin{theorem}
		Let $u^\varepsilon$ be the solution of \eqref{problem} with $f^\varepsilon\in L^2(R^\varepsilon)$ such that $|||f^\varepsilon|||_{L^2(R^\varepsilon)}\leq c$ with $c$ a positive constant independent of $\varepsilon$. Suppose that there is $f\in L^2(\omega )$ satifying		
		\begin{equation}
		\frac{1}{\varepsilon}\int_0^{\varepsilon g\left(\frac{x_1}{\varepsilon^\alpha},\frac{x_2}{\varepsilon^\beta}\right)}f^\varepsilon(x_1,x_2,x_3)dx_3\rightharpoonup f\quad\textrm{in}\quad L^2(\omega ).
		\end{equation}
		Then, there is $u\in H^1(\omega )$ such that
		$$
		\unf{\varepsilon}u^\varepsilon\to u\quad\textrm{in}\quad L^2(\omega ;H^1(Y^*))
		$$
		and 
		\begin{equation}
		\left\lbrace
		\begin{array}{l}
		-q\Delta_{x_1x_2}u+u=\bar f\quad\textrm{in}\quad \omega ,\\
		\frac{\partial u}{\partial \eta}=0\quad\textrm{on}\quad \partial \omega 
		\end{array}
		\right.
		\end{equation}
		where
		$$
		q= \frac{g_0}{\langle g\rangle_{(0,L_1)\times (0,L_2)} }\quad  \hbox{ and } \quad \bar f = \frac{f}{\langle g\rangle}_{(0,L_1)\times (0,L_2)}. 
		$$
	\end{theorem}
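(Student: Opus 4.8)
The plan is to follow the scheme used in the two previous ``strong oscillation'' subsections, now with \emph{both} horizontal oscillation scales $\eps^\alpha$ and $\eps^\beta$ finer than the thickness $\eps$, which makes every corrector trivial and leaves the plain Laplacian with a single effective coefficient. First I take $\varphi=u^\eps$ in \eqref{variationalproblem}, apply Hölder's inequality on the right and multiply by $\eps^{-1}$; the bound $|||f^\eps|||_{L^2(R^\eps)}\le c$ gives $|||u^\eps|||_{H^1(R^\eps)}\le C$. Proposition \ref{propositionconvergence} then produces $u\in H^1(\omega)$ with $\unf{\eps}u^\eps\to u$ strongly in $L^2(\omega;H^1(Y^*))$ and $|||u^\eps-u|||_{L^2(R^\eps)}\to 0$; restricting the $Y^*$-integral to $Y^*_+$ yields $\unf{\eps}^+u^\eps_+\to u$ strongly in $L^2(\omega\times Y^*_+)$. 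On the flat slab, Proposition \ref{Proprescaling} shows $\Pi_\eps u^\eps_-$ is bounded in $H^1(R_-)$ with $\|\partial_{x_3}\Pi_\eps u^\eps_-\|_{L^2(R_-)}=\eps\,|||\partial_{x_3}u^\eps_-|||_{L^2(R^\eps_-)}\to 0$, hence, up to a subsequence, $\Pi_\eps u^\eps_-\weto u^-$ in $H^1(R_-)$ with $u^-$ independent of $x_3$, and $\|\Pi_\eps u^\eps_--u\|_{L^2(R_-)}\le|||u^\eps-u|||_{L^2(R^\eps)}\to 0$ forces $u^-=u$. Finally, since $1<\alpha<\beta$, Theorem \ref{strong0} in the $x_2$-direction, together with its mirror statement in the $x_1$-direction (same proof), gives
\begin{equation*}
\unf{\eps}^+(\partial_{x_1}u^\eps_+)\weto 0,\qquad \unf{\eps}^+(\partial_{x_2}u^\eps_+)\weto 0\qquad\text{weakly in }L^2(\omega\times Y^*_+).
\end{equation*}

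Next I would pass to the limit. Taking any $\varphi\in H^1(\omega)$ in \eqref{variationalproblem}, dividing by $\eps$ and splitting over $R^\eps_-$ and $R^\eps_+$: on $R^\eps_-$ Proposition \ref{Proprescaling} rewrites the contribution as $\int_{R_-}(\partial_{x_1}\Pi_\eps u^\eps_-\,\partial_{x_1}\varphi+\partial_{x_2}\Pi_\eps u^\eps_-\,\partial_{x_2}\varphi+\Pi_\eps u^\eps_-\,\varphi)\,dx\to g_0\int_\omega(\nabla u\cdot\nabla\varphi+u\varphi)\,dx_1dx_2$; on $R^\eps_+$ one writes $\tfrac1\eps\int_{R^\eps_+}\psi\,dx=\tfrac1{L_1L_2}\int_{\omega\times Y^*_+}\unf{\eps}^+\psi\,dx_1dx_2dy+\tfrac1\eps\int_{R^\eps_{+1}}\psi\,dx$, where the last term vanishes by the unfolding criterion for integrals (the products $\partial_{x_i}u^\eps\,\partial_{x_i}\varphi$, $u^\eps\varphi$, $f^\eps\varphi$ satisfy u.c.i.\ by the uniform $|||\cdot|||_{L^2}$ bounds together with $\partial_{x_i}\varphi\in L^2(\omega)$), and the unfolded part converges, by the compactness above, to $\tfrac{|Y^*_+|}{L_1L_2}\int_\omega u\varphi\,dx_1dx_2=(\langle g\rangle_{(0,L_1)\times(0,L_2)}-g_0)\int_\omega u\varphi\,dx_1dx_2$, the two gradient terms dropping out. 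For the source, since $\varphi$ is independent of $x_3$, $\tfrac1\eps\int_{R^\eps}f^\eps\varphi\,dx=\int_\omega\big(\tfrac1\eps\int_0^{\eps g(x_1/\eps^\alpha,x_2/\eps^\beta)}f^\eps\,dx_3\big)\varphi\,dx_1dx_2\to\int_\omega f\varphi\,dx_1dx_2$ by hypothesis. Adding everything up, $u$ solves $g_0\int_\omega\nabla u\cdot\nabla\varphi\,dx_1dx_2+\langle g\rangle_{(0,L_1)\times(0,L_2)}\int_\omega u\varphi\,dx_1dx_2=\int_\omega f\varphi\,dx_1dx_2$ for all $\varphi\in H^1(\omega)$; dividing by $\langle g\rangle_{(0,L_1)\times(0,L_2)}$ this is the weak formulation of $-q\,\Delta_{x_1x_2}u+u=\bar f$ in $\omega$ with $\partial u/\partial\eta=0$, where $q=g_0/\langle g\rangle_{(0,L_1)\times(0,L_2)}$ and $\bar f=f/\langle g\rangle_{(0,L_1)\times(0,L_2)}$. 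Uniqueness of this limit problem and a Urysohn argument then promote the convergences to the whole family $\eps\to0$.

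The \textbf{main obstacle} is the vanishing of the weak limit of $\unf{\eps}^+(\partial_{x_1}u^\eps_+)$ (its $x_2$-counterpart being Theorem \ref{strong0}): the uniform bounds only provide a weak limit $u_1\in L^2(\omega\times Y^*_+)$ a priori, and one must show $u_1=0$. As in the proof of Theorem \ref{strong0}, I would test the unfolded variational identity against an oscillating function $\eps^\alpha\,\tilde\phi\big(x_1,x_2,\tfrac{x_3}{\eps}\big)\,\psi\big(\{x_1/\eps^\alpha\}_{L_1}\big)$ with $\tilde\phi\in\mathcal{D}(\omega\times(g_0,g_1))$ and $\psi'\in\mathcal{D}(0,L_1)$: the hypothesis $\alpha>1$ makes its $x_3$-derivative of order $\eps^{\alpha-1}\to 0$, and keeping $\tilde\phi$ independent of the fast $x_2$-variable avoids the divergent factor $\eps^{\alpha-\beta}$ produced by $\alpha<\beta$, so that in the limit only the $x_1$-derivative contributes at order one; varying $\psi'$ then forces $u_1=0$. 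Everything else is the routine bookkeeping already carried out in the earlier subsections.
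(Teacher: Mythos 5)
Your overall architecture matches the paper's: split $R^\eps$ into the flat slab $R^\eps_-$ and the corrugated cap $R^\eps_+$, use the rescaling $\Pi_\eps$ on the slab and $\unf{\eps}^+$ on the cap, pass to the limit in the unfolded/rescaled variational identity, obtain a pre-limit equation still carrying the weak limit $u_1$ of $\unf{\eps}^+(\partial_{x_1}u^\eps_+)$, and finally kill that term. The bookkeeping of coefficients ($g_0$ from $|R_-|$, $\langle g\rangle_{(0,L_1)\times(0,L_2)}$ from $|Y^*|/(L_1L_2)$, and the source term via the $x_3$-average) is exactly the paper's.

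The gap is in the last step, where you claim that $u_1=0$. It is not a ``mirror'' of Theorem \ref{strong0} with the same proof: when the oscillating factor is $\psi(\{x_1/\eps^\alpha\}_{L_1})$, the only way to keep the $x_2$-derivative of the test function bounded (since $\eps^{\alpha-\beta}\to\infty$) is, as you note, to take $\tilde\phi$ independent of the fast $y_2$. But then the limit identity
\begin{equation}
\int_{\omega\times Y^*_+} u_1\,\tilde\phi(x_1,x_2,y_3)\,\psi'(y_1)\,dx_1dx_2\,dy=0
\end{equation}
only tests the $y_2$-average of the zero-extension $\tilde u_1$ against functions $\psi'(y_1)$ of mean zero; it shows that $\int_0^{L_2}\tilde u_1\,dy_2$ is independent of $y_1$, and nothing more. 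This does not give $u_1=0$, nor even the weaker statement $\int_{Y^*_+}u_1\,dy=0$ that you would need to drop the $u_1$-term from the pre-limit equation. The paper resolves this by inserting a preliminary step that you omit: one first proves that $u_1$ does not depend on $y_2$. The argument is by duality, using the commutation rule of $\unf{\eps}$ with partial derivatives: for $\phi\in\mathcal{D}(\omega)$ and $\psi\in\mathcal{D}(Y^*_+)$,
\begin{equation}
\int_{\omega\times Y^*_+}u_1\,\phi\,\partial_{y_2}\psi
=\lim_{\eps\to 0}\int_{\omega\times Y^*_+}\unf{\eps}^+\partial_{x_1}u^\eps\,\phi\,\partial_{y_2}\psi
=\lim_{\eps\to 0}\int_{\omega\times Y^*_+}\eps^{\beta-\alpha}\,\unf{\eps}^+\partial_{x_2}u^\eps\,\phi\,\partial_{y_1}\psi=0,
\end{equation}
where the middle equality comes from writing $\unf{\eps}^+\partial_{x_1}u^\eps=\eps^{-\alpha}\partial_{y_1}\unf{\eps}^+u^\eps$, integrating by parts in $(y_1,y_2)$, and re-expressing $\eps^{-\alpha}\partial_{y_2}\unf{\eps}^+u^\eps=\eps^{\beta-\alpha}\unf{\eps}^+\partial_{x_2}u^\eps$; the last limit is zero precisely because $\beta>\alpha$. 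Only once this $y_2$-independence is established can the oscillating test function $\eps^\alpha\tilde\phi(x_1,x_2,x_3/\eps)\psi(\{x_1/\eps^\alpha\}_{L_1})$ be used to conclude $\tilde u_1=0$. You should add this duality step to your argument; the rest of your proof is fine.
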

	\begin{proof}
		Due to the uniform bounds, we have that there are $u_m\in L^2(\omega \times Y^*_+),u\in H^1(\omega )$, $m=1,2$, such that 
		\begin{equation}
		\begin{gathered}
		\unf{\varepsilon}u^\varepsilon\to u\quad\textrm{strongly in}\quad L^2(\omega ;H^1(Y^*))\\
		\unf{\varepsilon}\partial_{x_1}u^\varepsilon \rightharpoonup u_1\quad\textrm{weakly in}\quad L^2(\omega \times Y^*_+)\\
		\unf{\varepsilon}\partial_{x_2}u^\varepsilon \rightharpoonup u_2\quad\textrm{weakly in}\quad  L^2(\omega \times Y^*_+)\\
		\Pi_\varepsilon\partial_{x_1}u^\varepsilon\rightharpoonup\partial_{x_1} u_-\quad\textrm{weakly in}\quad H^1(R_-)\\
        \Pi_\varepsilon\partial_{x_2}u^\varepsilon\rightharpoonup\partial_{x_2} u_-\quad\textrm{weakly in}\quad H^1(R_-)\\
		\Pi_\varepsilon u^\varepsilon\to u_-\quad\textrm{strongly in}\quad L^2(R_-).
		\end{gathered}
		\end{equation}
		Also, it is simple to see that
		$$
		u_-=u\quad\textrm{a.e. in}\quad \omega .
		$$
		We rewrite the variational formulation of \eqref{problem} with test functions $\varphi\in H^1(\omega )$ as 
		\begin{equation}\label{variational_unfolded_SS}
		\begin{gathered}
			\int_{R_-}\Pi_\varepsilon\nabla_{x_1x_2}u^\varepsilon\Pi_\varepsilon\nabla_{x_1x_2}\varphi dx+\frac{1}{L}\int_{\omega \times Y^*_+}\unf{\varepsilon}^+\nabla_{x_1x_2}u^\varepsilon\unf{\varepsilon}^+\nabla_{x_1x_2}\varphi dx_1dx_2dy\\+\frac{1}{L}\int_{\omega \times Y^*}\unf{\varepsilon}u^\varepsilon\unf{\varepsilon}\varphi dx_1dx_2dy=\frac{1}{\varepsilon}\int_{R^\varepsilon}f^\varepsilon\varphi dx.
		\end{gathered}
		\end{equation}
		We pass to the limit the equation above using Theorem \ref{strong0} and get
		\begin{equation}\label{PrelimitSS}
		\begin{gathered}
			\int_{R_-}\nabla_{x_1x_2}u\nabla_{x_1x_2}\varphi dx+\frac{1}{L_1L_2}\int_{\omega \times Y^*_+}u_1\varphi_x dx_1dx_2dy\\+\frac{1}{L_1L_2}\int_{\omega \times Y^*}u\varphi dx_1dx_2dy=\int_{\omega }f\varphi dx_1dx_2.
		\end{gathered}
		\end{equation}
		
		To prove that  $u_1=0$, we prove first that it independs on the $y_2$ variable. Indeed,
		\begin{equation}
		\begin{gathered}
		\int_{\omega \times Y^*_+}u_1\phi(x_1,x_2)\partial_{2}\psi(y)dx_1dx_2dy
		=\lim_{\varepsilon\to0}\int_{\omega \times Y^*_+}\unf{\varepsilon}^+\partial_{x_1}u^\varepsilon \phi \partial_{y_2}\psi\,dx_1dx_2dy \\
		=\lim_{\varepsilon\to0}\int_{\omega \times Y^*_+}\varepsilon^{-\alpha}\partial_{y_2}\unf{\varepsilon}^+u^\varepsilon \phi\partial_{y_1}\psi\,dx_1dx_2dy= 
		\lim_{\varepsilon\to0}\int_{\omega \times Y^*_+}\varepsilon^{\beta-\alpha}\unf{\varepsilon}^+\partial_{x_2}u^\varepsilon \phi\partial_{y_1}\psi\,dx_1dx_2dy=0.
		\end{gathered}
		\end{equation}
		Next, we follow the steps of the previous sections and define
		$$
		\varphi^\varepsilon(x_1,x_2,x_3)=\varepsilon^\alpha \Tilde{\phi}\left(x_1,x_2,\frac{x_3}{\varepsilon}\right)\psi\left(\left\{\frac{x_1}{\varepsilon^\alpha} \right\}_{L_1}\right) \quad\textrm{in}\quad R^\varepsilon.
		$$
		With this test function in hands, one can prove that
		$$
		\tilde{u}_1=0\quad\textrm{a.e. in}\quad \omega \times (0,L_1)\times (g_0,g_1).
		$$
		Therefore, we rewrite \eqref{PrelimitSS} as follows
		\begin{equation}
		\begin{gathered}
		\int_{\omega }g_0\nabla_{x_1x_2}u\nabla_{x_1x_2}\varphi dx_1dx_2+\frac{|Y^*|}{L_1L_2}\int_{\omega }u\varphi dx_1dx_2=\int_{\omega }f\varphi dx_1dx_2,
		\end{gathered}
		\end{equation}
		obtaining at the effective problem
		\begin{equation}
		\left\lbrace
		\begin{array}{l}
		-\frac{g_0}{\langle g\rangle_{(0,L_1)\times (0,L_2)} }\Delta_{x_1x_2}u+u=\bar f\quad\textrm{in}\quad \omega ,\\
		\frac{\partial u}{\partial \eta}=0\quad\textrm{on}\quad \partial \omega 
		\end{array}
		\right.
		\end{equation}
		with
		$$
		\bar f = \frac{f}{\langle g\rangle}_{(0,L_1)\times (0,L_2)}. 
		$$
	\end{proof}
\ \\
\ \\
\noindent\textbf{Acknowledgments:}\textit{ Jos\'e M. Arrieta is partially supported by grants PID2019-103860GB-I00, PID2022-137074NB-I00  and CEX2023-001347-S ``Severo Ochoa Programme for Centres of Excellence in R\&D'' MCIN/AEI/10.13039/501100011033, the three of them from Ministerio de Ciencia e Innovaci\'on, Spain. Also by ``Grupo de Investigación 920894 - CADEDIF'', UCM, Spain. \\
Jean Carlos Nakasato is partially supported by FAPESP 2023/03847-6,  Brazil. 
\\ Manuel Villanueva-Pesqueira is partially supported by grants PID2019-103860GB-I00, PID2022-137074NB-I00 from MICINN, Spain. Also by ``Grupo de Investigación 920894 - CADEDIF'', UCM, Spain. 
}
\ \\
	
\end{document}